\newcommand\A{\mathbb A}
\renewcommand\P{\mathbb P}
\renewcommand\phi{\varphi}
\newcommand\gm{\mathop{\mathbb{G}_\mathrm{m}}}
\DeclareMathOperator\Sm{Sm}
\DeclareMathOperator\act{act}
\DeclareMathOperator\Hom{Hom}
\DeclareMathOperator\Gr{Gr}
\DeclareMathOperator\Ob{Ob}
\DeclareMathOperator\Mor{Mor}
\DeclareMathOperator\Spin{Spin}
\DeclareMathOperator\GL{GL}
\DeclareMathOperator\PGL{PGL}
\DeclareMathOperator\SL{SL}
\DeclareMathOperator\can{{can}}
\DeclareMathOperator\Pic{{Pic}}
\DeclareMathOperator\Spec{{Spec}}
\DeclareMathOperator{\Ker}{Ker}
\DeclareMathOperator{\ad}{ad}
\DeclareMathOperator{\Id}{Id}
\DeclareMathOperator{\Aut}{Aut}
\newcommand{\fm}{\mathfrak m}
\newcommand{\fp}{\mathfrak p}
\newcounter{noindnum}[subsection]
\renewcommand{\thenoindnum}{\roman{noindnum}}
\newcommand{\noindstep}{\refstepcounter{noindnum}{\rm(}\thenoindnum\/{\rm)} }
\newcommand{\stepzero}{\setcounter{noindnum}{0}
}
\theoremstyle{definition}
\newtheorem{definition}{Definition}[section]
\theoremstyle{plain}
\newtheorem{theorem}{Theorem}
\newtheorem{proposition}[definition]{Proposition}
\newtheorem{lemma}[definition]{Lemma}
\newtheorem{corollary}[definition]{Corollary}
\theoremstyle{remark}
\newtheorem{remark}[definition]{Remark}
\newtheorem{remarks}[definition]{Remarks}
\newcommand\cE{{\mathcal{E}}}
\newcommand\cF{{\mathcal{F}}}
\newcommand\cO{{\mathcal{O}}}
\newcommand\bF{{\mathbf F}}
\newcommand\bG{{\mathbf G}}
\newcommand\bH{{\mathbf H}}
\newcommand\bZ{{\mathbf Z}}
\newcommand\bx{{\mathbf x}}
\newcommand\by{{\mathbf y}}
\newcommand\Z{\mathbb Z}
\title[On the purity conjecture of Nisnevich]{On the purity conjecture of Nisnevich for torsors under reductive group schemes}
\author{Roman Fedorov}
\email{fedorov@pitt.edu}
\address{University of Pittsburgh, PA, USA}
\begin{document}
\begin{abstract}
    Let $R$ be a regular semilocal integral domain containing an infinite field $k$. Let $f\in R$ be an element such that for all maximal ideals $\fm$ of $R$ we have $f\notin\fm^2$. Let $\bG$ be a reductive group scheme over~$R$. Under an isotropy assumption on $\bG$ we show that a $\bG$-torsor over the localization $R_f$ is trivial, provided that it is rationally trivial. We show that this is not true without the isotropy assumption. Finally, if $\bG$ is a commutative group scheme of multiplicative type and the regular semilocal ring contains a field of characteristic zero, we prove an analogue of Nisnevich purity conjecture for higher \'etale cohomology groups.

    The first statement is derived from its abstract version concerning presheaves of pointed sets satisfying some properties. The counterexample is constructed by providing a torsor over a local family of affine lines that cannot be extended to a torsor over the corresponding local family of projective lines. The latter is accomplished using the technique of affine Grassmannians.

    \vspace{3pt}

    \noindent {\sc R\'esum\'e.} Soit $R$ anneau int\`egre semi-local r\'egulier contenant un corps $k$ infini. Soit $f\in R$ un \'el\'ement tel que pour tous les id\'eaux maximaux $\fm$ de $R$ on a $f\notin\fm^2$. Soit $\bG$ un sch\'ema en groupes r\'eductifs sur $R$. Sous une hypoth\`ese d'isotropie sur $\bG$, nous montrons qu'un $\bG$-torseur sur le localis\'e $R_f$ est trivial, \`a condition qu'il soit rationnellement trivial. Nous montrons que cette affirmation ne tient pas sans l'hypoth\`ese d'isotropie. Enfin, si $\bG$ est un sch\'ema en groupes commutatifs de type multiplicatif et l'anneau semi-local r\'egulier contient un corps de caract\'eristique z\'ero, nous prouvons un analogue de la conjecture de puret\'e de Nisnevich pour les cohomologies \'etales sup\'erieures.

    La premi\`ere affirmation est d\'eriv\'ee de sa version abstraite concernant les pr\'efaisceaux d'ensembles point\'es satisfaisant certaines propri\'et\'es. Le contre-exemple est obtenu en construisant un torseur sur une famille locale de droites affines qui ne s'\'etend pas en un torseur au-dessus de la famille correspondante de droites projectives. Cette derni\`ere est accomplie \`a l'aide de la technique des grassmanniennes affines.

\end{abstract}

\maketitle

\section{Introduction and main results}
Let $R$ be a regular local ring, let $\bG$ be a reductive group scheme over $R$, and let~$\cE$ be a $\bG$-torsor over $R$. The famous Grothendieck--Serre conjecture predicts that $\cE$ is trivial, if it is trivial over the fraction field $K$ of $R$. This has been proved in the case, when $R$ contains a field~\cite{FedorovPanin,PaninFiniteFieldsIzvestiya}. In fact, a generalization of this conjecture to \emph{semilocal\/} regular integral domains containing fields has been proved. The conjecture is still wide open in the mixed characteristic case, though recently there has been some significant progress~\cite{FedorovMixedChar,CesnaviciusGrSerre,GuoPaninGrSerre,GuoFeiGrSerre}.

The conjecture~\cite[Conj.~1.3]{Nisnevich2}, that was formulated by Nisnevich in 1989, is a generalization of the Grothendieck--Serre conjecture: \emph{let~$R$ and $\bG$ be as before, let $f\in R$ be such that $f\notin\fm^2$, where $\fm$ is the maximal ideal of $R$, and let~$\cE$ be a $\bG$-torsor over $R_f$, then $\cE$ is trivial, if it is trivial over~$K$.} This conjecture reduces to the Grothendieck--Serre conjecture if $f=1$. Very little was known about this conjecture until now. If $R$ is a discrete valuation ring, the conjecture reduces to the Grothendieck--Serre conjecture if $f$ is invertible and is trivial otherwise. Nisnevich in~\cite{Nisnevich2} proved the conjecture in dimension two, assuming that the residue field of the local ring $R$ is infinite and that $\bG$ is quasi-split (the condition that the residue field is infinite is removed by \v{C}esnavi\v{c}ius, see~\cite[Sect.~3.4.2(3)]{CesnaviciusProblems}). Note that the Nisnevich conjecture is non-trivial even for $\bG=\GL_n$, as opposed to the Grothendieck--Serre conjecture (in the $\GL_n$ case the conjecture goes back to Quillen, see~\cite{QuillenOnSerre}). In this case the conjecture was proved by
Bhatwadekar and Rao in~\cite{BhatwadekarRaoOnQuillen}, assuming that $R$ is a localization of a finitely generated algebra over a field. This was extended to arbitrary regular local rings containing fields by Popescu~\cite[Thm.~1]{PopescueOnNisnevich}. Guo proved a version of this conjecture for the rings of formal power series over valuation rings~\cite[Cor.~7.5]{Guo2021GrSerreValRings}.

In this article, if the regular local ring contains an infinite field, we prove the conjecture under some isotropy condition on $\bG$. In fact, under these assumptions, we prove the generalization of the conjecture to semilocal regular integral domains. Then we show that the conjecture does not hold without the isotropy assumption.

Finally, if $\bG$ is a commutative group scheme of multiplicative type and the regular semilocal ring contains a field of characteristic zero, we prove an analogue of Nisnevich purity conjecture for higher \'etale cohomology groups.

\subsection{Notations and conventions}
If $C$ is a category, we denote by $\Ob(C)$ its class of objects.

If $A$ is a pointed set, we denote its distinguished element by $\star$.

If $X$ is an integral scheme, we denote by $K(X)$ its function field. If $x$ is a schematic point of~$X$, we denote by $\cO_{X,x}$ its local ring and by $\fm_x\subset\cO_{X,x}$ the corresponding maximal ideal. More generally, if $\bx\subset X$ is a set of schematic points contained in an open affine subset of $X$, we denote by $\cO_{X,\bx}$ the corresponding semilocal ring.

We often work over a fixed base field $k$, in that case the product is the product over $k$: $\times:=\times_k$.

Let $U$ be a scheme and $\bG$ be a flat $U$-group scheme finitely presented over $U$. A $U$-scheme $\cE$ with a left action $\act\colon\bG\times\cE\to\cE$ is called \emph{a $\bG$-torsor over $U$\/} if~$\cE$ is faithfully flat and quasi-compact over $U$ and the action is simply transitive, that is, the morphism $(\act,p_2)\colon\bG\times_U\cE\to\cE\times_U\cE$ is an isomorphism (see~\cite[Sect.~6]{GrothendieckFGA}). A $\bG$-torsor $\cE$ over $U$ is \emph{trivial\/} if $\cE$ is isomorphic to $\bG$ as a~$U$-scheme with an action of $\bG$. This is well-known to be equivalent to the projection $\cE\to U$ having a section. If $T$ is a $U$-scheme, we will use the term ``$\bG$-torsor over $T$'' to mean a ``$\bG_T$-torsor over $T$''.

Assume that $\bG$ is affine over $U$. The pointed set of isomorphism classes of $\bG$-torsors over $T$ is the first non-abelian cohomology group $H^1(T,\bG):=H^1_{\mathrm{fppf}}(T,\bG)$ (note that, since $\bG$ is affine, every class in $H^1_{\mathrm{fppf}}(T,\bG)$ gives rise to a $\bG$-torsor because affine schemes can be glued in the fppf topology). The $\bG$-torsors are also called principal $\bG$-bundles.

\subsection{Nisnevich purity conjecture}\label{sect:NisConj}
Let us recall a notion from~\cite[Def.~1.1]{FedorovGrSerreNonSC}. Let $\bG$ be a semisimple group scheme of adjoint type over a connected scheme $U$ (see~\cite[Exp.~XXII, Def.~4.3.3]{SGA3-3}). By~\cite[Exp.~XXIV, Prop.~5.10]{SGA3-3} there is a sequence $U_1,\ldots,U_r$ of finite \'etale connected $U$-schemes such that
\begin{equation}\label{eq:prod}
    \bG\simeq\prod_{i=1}^r\bG^i,
\end{equation}
where $\bG^i$ is the Weil restriction of a simple adjoint $U_i$-group scheme (see~\cite[Exp.~XXIV, Sect.~5.3]{SGA3-3} for the definition of a simple group scheme). Note that the group schemes $\bG^i$ are uniquely defined by $\bG$ up to isomorphism. Recall that a semisimple group scheme $\bG$ over a scheme $S$ is called \emph{isotropic\/}, if it contains a non-trivial split torus. If $S$ is connected and semilocal, then a semisimple group scheme $\bG$ is isotropic if and only if it contains a proper parabolic subgroup scheme (see~\cite[Exp.~XXVI, Cor.~6.14]{SGA3-3}).

\begin{definition}
We say that a semisimple $U$-group scheme $\bG$ of adjoint type is \emph{strongly locally isotropic\/} if each factor~$\bG^i$ of $\bG$ is isotropic Zariski locally over $U$.
\end{definition}

We send the reader to~\cite[Rem.~1.2]{FedorovGrSerreNonSC} for the discussion of this notion.

Let now $\bG$ be a reductive $U$-group scheme. Let $\bZ$ be the center of $\bG$. We have the adjoint group scheme $\bG^{\ad}:=\bG/\bZ$ (see~\cite[Exp.~XXII, Prop.~4.3.4(i), Prop.~4.3.5(ii), Def.~4.3.6]{SGA3-3}).

We prove the Nisnevich conjecture assuming that the regular semilocal integral domain contains an infinite field and the adjoint group scheme of $\bG$ is strongly locally isotropic.

\begin{theorem}\label{th:Nisnevich}
Assume that $R$ is a regular semilocal integral domain containing an infinite field $k$. Let $K=K(R)$ be the quotient field of $R$. Let $f\in R$ be such that $f\notin\fm^2$ for any maximal ideal $\fm$ of $R$. Let $\bG$ be a reductive $R$-group scheme such that $\bG^{\ad}$ is strongly locally isotropic. Let $\cE$ be a $\bG$-torsor over $R_f$ such that its pullback to~$K$ is trivial. Then $\cE$ is trivial. In other words, the natural map $H^1(R_f,\bG)\to H^1(K,\bG)$ has a trivial kernel.
\end{theorem}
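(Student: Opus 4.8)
The plan is to deduce Theorem~\ref{th:Nisnevich} from an abstract statement about presheaves of pointed sets, with the reductive group entering only through homotopy-invariance properties that the isotropy hypothesis guarantees. First I would reduce to a geometric setting: since $R$ is regular and contains a field, Popescu's theorem presents it as a filtered colimit of smooth $k$-algebras, and a standard limit argument reduces to the case $R=\cO_{X,\bx}$, the semilocal ring of finitely many closed points $\bx$ on a smooth irreducible affine $k$-variety $X$. The hypothesis $f\notin\fm^2$ then says that the divisor $V(f)$ is regular at the points $\bx$, so after shrinking $X$ I may assume $V(f)$ is smooth; the torsor $\cE$ is defined on $X\setminus V(f)$ near $\bx$ and is trivial over $K$.

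Next I would formalize the functor $\cF\colon U\mapsto H^1(U,\bG)$ as a presheaf of pointed sets on essentially smooth affine $R$-schemes and verify the axioms of the abstract theorem. The decisive property is a relative homotopy-invariance and injectivity statement along the affine line: over a semilocal base, a $\bG$-torsor on the relative line that is trivial along a section is controlled by its restriction to that section. This is precisely what the strongly locally isotropic assumption on $\bG^{\ad}$ buys, via the theory of elementary subgroups and the homotopy-invariance results available for isotropic groups over semilocal rings; without isotropy this property fails, consistent with the counterexample promised in the abstract. The remaining axioms --- Zariski glueing and compatibility with localization at $V(f)$ --- are formal properties of nonabelian $H^1$.

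The geometric core is a presentation (``nice triple'') lemma. Around $\bx$ I would construct a smooth morphism $\pi\colon\cX\to B$ of relative dimension one onto a semilocal base $B$, together with a section $s$, arranged so that both $V(f)$ and the closed subscheme where $\cE$ fails to be trivial are finite over $B$ and disjoint from $s$ after an \'etale localization; a suitable linear projection is available because $k$ is infinite. Viewing $\cX$ as a family of open subsets of affine lines over $B$, the section $s$ serves as a basepoint over which $\cE$ is trivial, while the fibre over $\bx$ records the class in $H^1(R_f,\bG)$ to be killed.

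Finally I would run the moving argument: using rational triviality together with the isotropic moving lemma on the relative affine line, trivialize $\cE$ in a neighborhood of the section and then transport triviality to $\bx$ by homotopy invariance, concluding that the class in $H^1(R_f,\bG)$ is trivial. I expect the main obstacle to be the presentation lemma in the presence of the extra divisor: one must make the curve fibration smooth, keep $V(f)$ finite over $B$ and transversal to the section (this is exactly where $f\notin\fm^2$ is indispensable), and simultaneously keep the non-triviality locus of $\cE$ disjoint from the section so that the isotropic moving lemma applies --- all compatibly, and over an infinite field so that the needed generic choices exist.
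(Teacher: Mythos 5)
Your overall architecture does match the paper's: a Popescu reduction to the semilocal ring of finitely many closed points on a smooth affine $k$-variety with $\{f=0\}$ smooth; axiomatization of $U\mapsto H^1(U,\bG)$ as a Nisnevich semisheaf of pointed sets with limit, local-triviality and affine-line section properties (isotropy entering exactly through the section property); a fibration-into-curves presentation lemma keeping both $\{f=0\}$ and the non-triviality locus finite over the base; and descent of the class to the affine line over $R=\cO_{X,\bx}$. Up to that point your sketch tracks Propositions~\ref{pr:Fibration}--\ref{pr:A1Descend}.

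The gap is in your final step, and it sits precisely at the point the paper identifies as its new contribution. After descent, what you hold is not a torsor on $\A^1_R$ trivial off a finite subscheme, but a class $\cE'\in\Ker\bigl(\bF(\A^1_R-Y)\to\bF(\A^1_R-Z)\bigr)$, where $Y\subset Z$ are $R$-finite, $Y$ is $R$-\'etale and lies over the hypersurface $\{f=0\}$, and the section $s$ meets $Y$ exactly over $\{f=0\}\subset\Spec R$ (one only has $f|_{s^{-1}(Y)}=0$, so merely $s|_{R_f}$ lands in $\A^1_R-Y$). In particular the loci cannot be arranged ``disjoint from $s$'' as you assert, and neither the section property (Sec)/(SecF) nor homotopy invariance for isotropic groups applies as stated: those concern classes defined on all of $\A^1_U$ (or $\A^1_U\times_kY$), whereas $\cE'$ lives only on the complement of $Y$. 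Extending $\cE'$ across $Y$ is essentially the original purity problem, so ``trivialize near the section and transport by homotopy invariance'' is circular here. The paper closes this gap with Proposition~\ref{pr:A1}: the map $\bF(\A^1_R-Y)\to\bF(\A^1_R-Z)$ has trivial kernel when $Y$ is $R$-\'etale. Its proof is the genuinely new ingredient and has three components absent from your outline: (a) the infinitude of $k$ is used to construct an \'etale shift, built from distinct rational points $t_1,\dotsc,t_d\in k$, moving $Y$ into the constant position $Y_0\times_k\Spec R$; (b) in that constant case one applies the \emph{family} version of the Grothendieck--Serre statement (Corollary~\ref{cor:Family}, which requires (SecF) rather than (Sec), i.e.~\cite[Thm.~5]{FedorovGrSerreNonSC} for torsors) with the affine factor $\A^1_k-Y_0$, reducing to the generic fiber; and (c) over the fraction field one glues with the trivial class and invokes (A1F), Gille's theorem for torsors over $\A^1_K$. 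Your mention of ``rational triviality'' gestures at (c), but without (a) and (b) --- and without even formulating the trivial-kernel statement for $\bF(\A^1_R-Y)\to\bF(\A^1_R-Z)$ --- the argument does not close.
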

\begin{remark}
    We note that our proof was extended by \v{C}esnavi\v{c}ius in~\cite{CesnaviciusBassQuillen} to the case when $k$ is allowed to be finite. It is also shown in loc.~cit.~that it is enough to assume that $\bG$ is strongly locally isotropic along the hypersurface $\{f=0\}$.
\end{remark}
We will axiomatize the properties of the first non-abelian cohomology functor needed to prove the statement (see Theorem~\ref{th:NisnevichFun} below). We derive the above theorem from its ``abstract'' version in Section~\ref{sect:ProofNis}.

The Nisnevich conjecture is not true in general. We indeed provide counterexamples when $\bG^{\ad}$ fails to be strongly locally isotropic.

\begin{theorem}\label{th:CounterExample}
  There are a regular local ring $R$ containing an infinite field, a simple simply-connected group scheme $\bG$ over $R$, and a generically trivial $\bG$-torsor $\cE$ over $\A^1_R$ that cannot be extended to $\P^1_R$.
\end{theorem}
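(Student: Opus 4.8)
The plan is to construct the torsor directly on $\A^1_R$ and to detect its non-extendability entirely in a formal neighborhood of the section at infinity. Writing $s=1/t$ for a coordinate at infinity, formal gluing (Beauville--Laszlo) presents $\P^1_R=\A^1_R\cup\Spec R[[s]]$ glued along $\Spec R((s))$, so a $\bG$-torsor $\cE$ on $\A^1_R$ extends to $\P^1_R$ if and only if its restriction $\cE|_{\Spec R((s))}$ extends to $\Spec R[[s]]$. It therefore suffices to produce $\cE$, generically trivial on $\A^1_R$, whose class in $H^1(\Spec R((s)),\bG)$ is nontrivial and, crucially, lies outside the image of the restriction from $H^1(\Spec R[[s]],\bG)$. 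This is exactly where the affine Grassmannian $\Gr_{\bG}$ enters: it is the moduli of $\bG$-torsors on the formal disk $\Spec R[[s]]$ equipped with a trivialization over the punctured disk $\Spec R((s))$, so the extendable classes at infinity are precisely those in the image of $\Gr_{\bG}(R)$, and non-extendability becomes a statement about the position of the class relative to this image.

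For such an obstruction to survive, I would take $\bG$ to be \emph{anisotropic}. This choice is in fact forced: when $\bG^{\ad}$ is strongly locally isotropic, Theorem~\ref{th:Nisnevich} (and the affine-Grassmannian mechanism behind it) straightens out every local class at infinity, so any counterexample must violate that hypothesis. Concretely I would let $R$ be the local ring of a smooth surface over an infinite field $k$, so that $\dim R\ge 2$ -- the smallest dimension where the phenomenon can occur, since over a field itself an $\A^1$ carries no nonconstant such families -- and take $\bG$ a simple simply-connected anisotropic $R$-form, for instance a spinor or special unitary group of an anisotropic form, or a norm-one group of a division algebra. Anisotropy guarantees that $\bG$ has no proper parabolic subgroup over the relevant residue fields, which is what forbids a generically trivial class at infinity from being absorbed into a reduction of structure group.

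To build $\cE$ I would use the loop-group description: choose cocharacter data and a transition element (equivalently, a family of points of $\Gr_{\bG}$ over $\Spec R$) which degenerates along the special fiber, so that the associated Hecke modification of the trivial torsor is nontrivial near infinity yet becomes trivial over the generic point of $\Spec R$ after inverting the parameter; gluing this modification at infinity to the trivial torsor on $\A^1_R$ yields a torsor defined on all of $\A^1_R$ and trivial over $K(R)(t)$. The delicate point is to make the modification collapse generically (forcing generic triviality) while its ``type'' at infinity over the closed point survives (forcing the class at infinity to be nonzero); engineering a single family with this semicontinuous jump is precisely what the stratification of $\Gr_{\bG}$ by Schubert varieties indexed by dominant coweights, together with the specialization behaviour of these strata, is used for.

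Finally, to prove non-extendability I would extract a discrete invariant from $\Gr_{\bG}$ that any extension to $\P^1_R$ would be compelled to annihilate: pushing $\cE$ forward along a faithful representation $\bG\to\GL(V)$ gives a vector bundle, and one reads off the degree (the Harder--Narasimhan--Shatz type) of its restrictions to the fibres $\P^1_{k(y)}$, equivalently the coweight labelling the Schubert stratum of the class at infinity. An honest torsor on $\P^1_R$ restricting to $\cE$ would impose a compatibility between the generic fibre -- where $\cE$ is trivial, so the invariant vanishes -- and the special fibre, forcing the invariant to be constant and contradicting the engineered jump. The main obstacle, and the technical heart of the argument, is achieving generic triviality and a surviving nonzero invariant at infinity \emph{simultaneously}: one must exhibit an explicit family whose generic member is trivial but whose limit at infinity is a non-extendable class, and then verify non-extendability by a genuine computation in $\Gr_{\bG}$. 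This is exactly where the anisotropy of $\bG$ is indispensable, since for an isotropic group the would-be obstruction is killed by a parabolic reduction.
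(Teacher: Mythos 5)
Your proposal shares two correct instincts with the paper — anisotropy of $\bG$ is the essential hypothesis to violate, and the affine Grassmannian is the right tool — but both the construction and the detection step contain fatal errors. First, the construction is geometrically backwards: whatever Hecke modification you glue to the trivial torsor on $\A^1_R$ along $\Spec R(\!(s)\!)$, the resulting torsor on $\P^1_R$ restricts to the \emph{trivial} torsor on $\A^1_R$, so the torsor you actually produce on $\A^1_R$ extends tautologically. Second, the ``engineered jump'' cannot exist: a family of points of $\Gr_\bG$ over $\Spec R$ is a morphism $\Spec R\to\Gr_\bG$ into an ind-projective ind-scheme, and if the generic point maps to the (closed) base point, the whole irreducible scheme $\Spec R$ maps into its closure, i.e.\ the family is constant; moreover, since $\bG_{K(R)}$ is anisotropic, $\Gr_\bG$ has no nontrivial $K(R)$-points at all (see~\cite[Sect.~6]{FedorovExotic}), so ``trivial generic member, nontrivial special member'' is doubly impossible. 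Third, the final detection argument is wrong: degree-type invariants vanish identically for a simply connected group, and the Harder--Narasimhan type of the fibres of a torsor on $\P^1_R$ is only \emph{semicontinuous}, so a jump from trivial type at the generic fibre to nontrivial type at the closed fibre is exactly what is allowed — e.g.\ the rank-two bundle on $\P^1_k\times\A^1_k$ interpolating between $\cO\oplus\cO$ and $\cO(1)\oplus\cO(-1)$ is a generically trivial $\SL_2$-torsor on $\A^1_R$ that \emph{does} extend, with jumping type. No contradiction can be extracted this way.

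The paper's mechanism is different in exactly the places where your plan breaks. The modification divisor must be \emph{horizontal}: the torsor is built on $\A^1_k\times\P^1_B$ (with $B$ a normal local domain and $B'$ a finite \'etale cover splitting $\bG$, embedded in $\A^1_B$), trivialized off $\A^1_k\times\Spec B'$; by Lemma~\ref{lm:Grassmannians} such data is the same as a morphism $\A^1_{B'}\to\Gr_G$, so the Grassmannian direction is transverse to the affine line of the theorem, and the regular local ring of the statement is a local ring $R_x$ of $\P^1_B$. Non-extendability is manufactured from the elementary Lemma~\ref{lm:NonExt} — a morphism $\A^1_{B'}\to\P^1_k$ of the shape $(tx+1\colon 1)$ that does not extend to $\P^1_{B'}$ — composed with a closed embedding $\P^1_k\hookrightarrow\Gr_G$. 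Anisotropy is then used not through numerical invariants but through \emph{uniqueness of extensions}: since $\Gr_{\bG_{k(X)}}$ has a single section, extensions to $\P^1_k\times X$ over a normal base are unique and therefore glue Zariski-locally (Proposition~\ref{pr:ExtAniso}). Consequently, if the restriction of the torsor to $\A^1_{R_x}$ extended to $\P^1_{R_x}$ for \emph{every} $x\in\P^1_B$, these local extensions would glue to a global one, which via Lemma~\ref{lm:Grassmannians} would extend the morphism to $\P^1_{B'}$ — a contradiction (Proposition~\ref{pr:CounterExample}). Note that the bad point $x$ is found by contradiction, not exhibited; your plan to certify non-extendability at an explicitly chosen point by ``a genuine computation in $\Gr_\bG$'' has no mechanism behind it, and supplying one is precisely the content you are missing.
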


This theorem, proved in Section~\ref{sect:CounterEx} below, contradicts the Nisnevich conjecture. Indeed, let $u$ be the closed point of $\Spec R$. Were the Nisnevich conjecture true, we could apply it to the local ring of $\infty\times u$ on $\P^1_R$, to find a Zariski neighborhood $W$ of the infinity divisor $\infty\times\Spec R\subset\P_R^1$ such that $\cE|_{W-(\infty\times\Spec R)}$ is trivial. Then, gluing $\cE$ with the trivial torsor over $W$, we would get an extension of $\cE$ to $\P^1_R$.

\begin{remark}
  The regular local ring in Theorem~\ref{th:CounterExample} can be constructed as follows (see Section~\ref{sect:CounterEx} and~\cite{FedorovExotic}). Let $k$ be an algebraically closed field and consider a closed embedding of $k$-groups $\Spin_7\to\GL_n$ for some $n$. Then $R$ is the local ring of a closed point of $\A^1_k\times(\GL_n/\Spin_7)$. In this case, the group scheme $\bG$ is a strongly inner form of $\Spin_7$.
\end{remark}

\subsubsection{Higher cohomology} In the case when the group scheme $\bG$ is commutative, we have a version of Theorem~\ref{th:Nisnevich} above for higher \'etale cohomology.

\begin{theorem}\label{th:NisnevichAbelian}
Assume that $R$ is a regular semilocal integral domain containing a field of characteristic zero. Let $K=K(R)$ be the quotient field of $R$. Let $f\in R$ be such that $f\notin\fm^2$ for any maximal ideal $\fm$ of $R$. Let $\bG$ be an $R$-group scheme of multiplicative type. Then for all $n\ge1$ the natural homomorphism $H^n(R_f,\bG)\to H^n(K,\bG)$ is injective.
\end{theorem}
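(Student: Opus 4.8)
The plan is to reduce the statement to torsion coefficients $\mu_m^{\otimes j}$, where étale-cohomological purity is available, and then to exploit the regularity of the hypersurface $\{f=0\}$ through a residue argument. Write $X=\Spec R$, let $D=\Spec R/fR$ be the closed subscheme cut out by $f$, and let $j\colon U=\Spec R_f=X\setminus D\hookrightarrow X$ and $i\colon D\hookrightarrow X$ be the open and closed immersions. The hypothesis $f\notin\fm^2$ for every maximal ideal means that $f$ is part of a regular system of parameters at each closed point, so $D$ is again a regular semilocal scheme, of pure codimension one in $X$. Since $R$ contains a field of characteristic zero, every finite group scheme of multiplicative type is a locally constant constructible torsion sheaf of invertible order, so Gabber's absolute purity and the Bloch--Ogus--Gabber theorem apply to all the torsion coefficients that will occur.

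The technical heart is the case $\cF=\mu_m^{\otimes j}$. Here the localization sequence for the pair $(X,D)$ reads
\begin{equation*}
\cdots\to H^n_D(X,\cF)\to H^n(X,\cF)\xrightarrow{\,j^*\,}H^n(U,\cF)\xrightarrow{\,\partial\,}H^{n+1}_D(X,\cF)\to\cdots,
\end{equation*}
and Gabber's absolute purity identifies $H^{i}_D(X,\cF)$ with $H^{i-2}(D,\cF(-1))$, turning this into a Gysin sequence. Given $\alpha\in H^n(U,\cF)$ with $\alpha|_K=0$, I would first argue that its residue $\partial(\alpha)\in H^{n-1}(D,\cF(-1))$ vanishes: restricting to the generic point $\xi$ of each irreducible component of $D$ and using that the boundary map commutes with restriction to the discrete valuation ring $\cO_{X,\xi}$, the class $\partial(\alpha)$ restricts at $\xi$ to the residue of $\alpha|_K=0$, hence dies at every such $\xi$; as $D$ is regular semilocal, the Bloch--Ogus--Gabber injectivity $H^{n-1}(D,\cF(-1))\hookrightarrow\bigoplus_\xi H^{n-1}(\kappa(\xi),\cF(-1))$ forces $\partial(\alpha)=0$. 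By exactness $\alpha=j^*(\beta)$ for some $\beta\in H^n(X,\cF)$; since $\beta|_K=\alpha|_K=0$ and $X$ is regular semilocal, a second application of Bloch--Ogus--Gabber injectivity $H^n(X,\cF)\hookrightarrow H^n(K,\cF)$ gives $\beta=0$, whence $\alpha=0$. This settles the theorem for all finite coefficients of multiplicative type.

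It remains to pass from torsion coefficients to an arbitrary group $\bG$ of multiplicative type. I would use the canonical extension $1\to\bT\to\bG\to\bF\to1$, where $\bT=\bG^\circ$ is the maximal subtorus and $\bF$ is finite of multiplicative type (hence covered above). For $\bT$, a connected finite étale cover $Y=\Spec R'\to X$ splitting $\bT$ presents it as a subgroup of the quasi-trivial torus $R_{Y/X}(\bT_Y)$; here $R'$ is again regular semilocal and $Y\times_X U=\Spec R'_f$, so Shapiro's lemma reduces the quasi-trivial factors to $\gm$ over the very same type of pair. For $\gm$ the degree-one case is $\Pic(R_f)=0$, which follows from the triviality of the divisor class group of the regular semilocal ring $R$ and the localization sequence for class groups; in degrees $n\ge2$ the groups $H^n(-,\gm)$ are torsion, so the Kummer sequence $1\to\mu_m\to\gm\xrightarrow{m}\gm\to1$ reduces them to the already-settled case of $\mu_m$.

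The main obstacle is that all of these dévissages are long exact sequences, while the comparison map is controlled only by injectivity, not by a full kernel description: the scheme $U=\Spec R_f$ is \emph{not} semilocal, so $H^n(U,\cF)$ need not be the exact kernel of the residue maps, and the connecting homomorphisms (in both the Kummer sequence and $1\to\bT\to\bG\to\bF\to1$) obstruct a naive five-lemma comparison of the sequences over $U$ and over $K$. I would resolve this by carrying the dévissage not on $U$ but on the two semilocal schemes $X$ and $D$, where the strong form of the Gersten property --- exactness of the whole coniveau complex, not merely injectivity at the generic point --- does hold and, crucially, is stable under extensions of the coefficient sheaf via the long exact sequence of coniveau complexes. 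Thus I would upgrade the base cases $\mu_m^{\otimes j}$ and $\gm$ to coniveau-exactness on every regular semilocal $R$-scheme, propagate it to all multiplicative-type $\bG$ through the two exact sequences above, and only then feed the resulting injectivity for $X$ and $D$ into the localization-sequence argument of the second paragraph, now run with $\bG$-coefficients and with $H^{\ast}_D(X,\bG)$ analyzed degreewise (the non-torsion contribution living solely in the divisor term $H^0(D,\bZ)$, the remainder being torsion). Checking the compatibility of the purity isomorphism with this degreewise splitting, and the stability of coniveau-exactness under the Weil-restriction and Kummer steps, is the delicate point.
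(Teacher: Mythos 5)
Your second paragraph --- the localization-sequence argument combining Gabber's absolute purity for the regular pair $(\Spec R,\{f=0\})$ with Gersten-type injectivity on the two \emph{semilocal} schemes $\Spec R$ and $D=\Spec R/fR$ --- is correct, and it is a genuinely different route from the paper's: the paper never dévisses the coefficients at all, but fixes $n$ and $\bG$, forms the single functor $Y\mapsto H^n(Y,\bG)$, and verifies the axioms of Theorem~\ref{th:NisnevichFun} (excision for the semisheaf property, (Lim), Panin's theorem for (LT), and Colliot-Th\'el\`ene--Ojanguren homotopy invariance for (Sec)/(A1F)). Your argument in fact yields a clean reduction of the general-$f$ statement to the $f=1$ statement for suitable coefficients on $\Spec R$ and on $D$, which is a nice observation. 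For coefficients coming from the base field ($\mu_m^{\otimes j}$, $\Q/\Z(j)$, $\gm$, hence quasi-trivial tori via Shapiro) all the needed inputs are quotable, so that part stands.

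The gap is the passage from these base cases to an arbitrary $\bG$ of multiplicative type, i.e., precisely the content of the theorem, and it occurs at the step you yourself flag as ``the delicate point''. First, the assertion that coniveau-exactness ``is stable under extensions of the coefficient sheaf via the long exact sequence of coniveau complexes'' is not a valid deduction: for $0\to A\to B\to C\to 0$ one gets a termwise \emph{long} exact sequence of coniveau complexes, and acyclicity of the outer complexes does not propagate to the middle one --- in the associated exact couple, the obstruction is exactly the connecting maps you are trying to circumvent, so nothing is gained over the failed four-lemma argument; the same problem reappears for the non-split triangle computing $Ri^!\bG$ from $Ri^!$ of its toric and finite parts. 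Second, even the inputs your chase needs on the semilocal schemes are not quotable: Gersten injectivity on $\Spec R$ (resp.\ on $D$) for coefficients defined over $R$ itself --- the finite part of $\bG$, the cocharacter sheaf of its maximal torus, or $\bG$ itself, which is the $f=1$ case of the theorem --- is \emph{not} covered by Bloch--Ogus or by~\cite[Thm.~4.2]{ColliotTeleneOjanguren}, since those arguments require the coefficients to be pulled back from the base field (this is what makes homotopy invariance, and hence Gabber-style effaceability, applicable). Handling coefficients that live over $X$ rather than over $k$ is exactly what the paper's property (LT) (via Panin's theorem) and the semisheaf machinery of Theorem~\ref{th:NisnevichFun} are for; your sketch has no substitute for it, so the proof of the theorem in its stated generality is not complete.
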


If $f=1$ and $\bG$ comes from the base field, this is~\cite[Thm.~4.2]{ColliotTeleneOjanguren}. This theorem will be derived from Theorem~\ref{th:NisnevichFun} as well; see Section~\ref{sect:ProofAbelian}.

\subsection{Grothendieck--Serre and Nisnevich conjectures for Nisnevich semisheaves}
Colliot--Th\'el\`ene and Ojanguren in~\cite[Thm~1.1]{ColliotTeleneOjanguren} proved the conjecture of Grothendieck and Serre for group schemes coming from the base field, assuming additionally that the base field is infinite. Roughly speaking, they proved the conjecture for any presheaf of pointed sets that satisfies the Nisnevich descent, commutes with filtered limits, and is homotopy invariant. Then they have shown that the presheaf of first non-abelian cohomology satisfies these properties.

In~2012 Jean-Pierre Serre asked whether it is possible to recast the proof in~\cite{FedorovPanin} in the same way. We answer this question positively in Section~\ref{sect:GrSerre}. Let us introduce the relevant terminology.

\subsubsection{Essentially smooth morphisms}
\begin{definition}
    Let $X$ be a Noetherian scheme. We say that a Noetherian $X$-scheme $Y$ is \emph{essentially smooth\/} over $X$ if it is a filtered inverse limit of smooth $X$-schemes with transition morphisms being open affine embeddings.
\end{definition}

For a discussion of this notion, see Section~\ref{sect:var}. We denote by $\Sm'_X$ the full subcategory of the category of $X$-schemes whose objects are Noetherian schemes essentially smooth over $X$. We emphasize that the morphisms in $\Sm'_X$ are arbitrary $X$-morphisms of schemes.

An important example of an essentially smooth scheme is the following: let $X$ be an affine scheme, let $\bx$ be a finite set of schematic points of $X$, let $\cO_{X,\bx}$ be the semilocal ring of $\bx$, then $\Spec\cO_{X,\bx}$ is an object of $\Sm'_X$. In particular, if $X$ is integral, then $\Spec K(X)$, where $K(X)$ is function field of $X$, is essentially smooth over $X$.

\begin{lemma}\label{lm:EssSm}
(i) Let $X$, $Y$, and $Z$ be Noetherian schemes such that $Y$ be essentially smooth over $X$ and $Z$ be essentially smooth over $Y$. Then $Z$ is essentially smooth over $X$.

(ii) A base change of an essentially smooth morphism is essentially smooth.
\end{lemma}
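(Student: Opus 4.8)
The plan is to reduce both parts to the standard limit formalism for finitely presented morphisms (EGA IV, \S8 and \S17, or the corresponding results in the Stacks project); the only real subtlety is the bookkeeping needed to produce, in each case, a \emph{single} cofiltered indexing system whose transition morphisms are again open affine embeddings. Throughout I write $Y=\varprojlim_i Y_i$ with $Y_i$ smooth over $X$, and $Z=\varprojlim_j Z_j$ with $Z_j$ smooth over the relevant base, all transition morphisms being open affine embeddings. After passing to cofinal subsystems I may assume all $Y_i$ and $Z_j$ are quasi-compact, hence (being locally of finite type over a Noetherian scheme) of finite type and Noetherian, so that the structural morphisms are finitely presented and the approximation theorems apply.

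I would prove part~(ii) first, as it is the easier one. Since base change commutes with inverse limits, $Y\times_X X'=\varprojlim_i(Y_i\times_X X')$. Each $Y_i\times_X X'$ is smooth over $X'$, and the transition morphisms are base changes of open affine embeddings, hence again open immersions that are affine. The one genuine point is Noetherianness of $Y\times_X X'$. For this I observe that for a fixed index the projection $Y\to Y_{i_0}$ is affine and is a cofiltered limit of affine open immersions; an affine open immersion is a flat epimorphism of finite presentation, and a filtered colimit of flat epimorphisms is a flat epimorphism, so locally $Y\to Y_{i_0}$ is $\Spec$ of a flat epimorphism $A\to D=\varinjlim_i A_i$ with $A$ and $D$ Noetherian. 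Base changing, the corresponding local map for $Y\times_X X'\to Y_{i_0}\times_X X'$ is a flat epimorphism $B\to D\otimes_A B$ with $B$ Noetherian. For any flat epimorphism $B\to C$ every ideal $J\subseteq C$ satisfies $J=(J\cap B)C$, so contraction is injective on ideals and $C$ inherits the ascending chain condition from $B$; thus $D\otimes_A B$ is Noetherian. Since $Y_{i_0}\times_X X'$ is finite type over the Noetherian scheme $X'$, a finite affine cover of it pulls back through the affine morphism to a finite affine cover of $Y\times_X X'$ by spectra of Noetherian rings, so $Y\times_X X'$ is Noetherian and therefore essentially smooth over $X'$.

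For part~(i) the essential mechanism is approximation along $Y=\varprojlim_i Y_i$. Because $Z_j\to Y$ is finitely presented and the $Y_i$ are Noetherian with affine transition morphisms, each $Z_j$ descends: there are an index $i=i(j)$ and a finitely presented $Y_i$-scheme $\widetilde Z_j$ with $Z_j\cong\widetilde Z_j\times_{Y_i}Y$, and after enlarging $i$ the scheme $\widetilde Z_j$ is smooth over $Y_i$. Since $Y_i\to X$ is smooth, the composite $\widetilde Z_j\to Y_i\to X$ is smooth, and setting $Z_j^{(i')}:=\widetilde Z_j\times_{Y_i}Y_{i'}$ for $i'\ge i$ yields $Z_j=\varprojlim_{i'}Z_j^{(i')}$ with each $Z_j^{(i')}$ smooth over $X$ and with transition morphisms that are base changes of the open affine embeddings $Y_{i''}\to Y_{i'}$, hence again open affine embeddings.

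It then remains to splice the two directed systems together. I would realize each transition morphism $Z_{j'}\to Z_j$ (for $j'\ge j$) at finite level: being an open affine embedding over $Y$, it descends by the limit theorems for morphisms, for open immersions, and for affine morphisms to an open affine embedding $Z_{j'}^{(i')}\to Z_j^{(i')}$ of $X$-schemes for $i'$ large. Organizing the pairs $(j,i')$ into a cofiltered category then exhibits $Z$ as a cofiltered limit of the smooth $X$-schemes $Z_j^{(i')}$ with all transition morphisms open affine embeddings; as $Z$ is Noetherian by hypothesis, this shows $Z$ is essentially smooth over $X$. The main obstacle is precisely this assembling step: the individual approximation and preservation statements are routine, but making the descent of the open affine embeddings uniform across \emph{both} directed systems, so that one well-defined cofiltered system with the required transition morphisms results, is where the care lies.
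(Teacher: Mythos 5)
Your part (ii) is correct and actually supplies details the paper omits (the paper dismisses (ii) as clear): the observation that for a flat epimorphism $B\to C$ every ideal $J\subseteq C$ satisfies $J=(J\cap B)C$ (tensor $B/(J\cap B)\hookrightarrow C/J$ with the flat $B$-algebra $C$ and use $M\otimes_BC\cong M$ for $C$-modules $M$), hence that $C$ inherits the ascending chain condition, is a valid route to Noetherianness of the base change. One inaccuracy: you cannot arrange quasi-compactness of the $Y_i$ by ``passing to cofinal subsystems'' --- a cofinal subsystem consists of the same schemes. What does work is to use that the transitions are open immersions: fix $i_0$, regard all $Y_i$, $i\ge i_0$, as open subschemes of $Y_{i_0}$, choose a quasi-compact open $V\subseteq Y_{i_0}$ containing the image of the quasi-compact scheme $Y$, and replace each $Y_i$ by $Y_i\cap V$; the transitions remain affine open immersions and the limit is still $Y$. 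This matters because the limit theorems of EGA~IV, \S8 and \S17 that you invoke require the terms of the system to be quasi-compact and quasi-separated. The first half of your part (i) --- descending each $Z_j$ to a smooth, finitely presented $\widetilde Z_j$ over some $Y_{i(j)}$ --- coincides with the paper's treatment of the case where $Z$ itself is smooth over $Y$.

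The genuine gap is the splicing step, which you yourself flag as ``where the care lies'' but never carry out, and it is not routine bookkeeping. The descended transition morphism $u^{(i)}_{jj'}\colon Z_{j'}^{(i)}\to Z_j^{(i)}$ exists only for $i$ above a threshold $\psi(j,j')$ and is unique only after a further enlargement; the cocycle identity $u^{(i)}_{jj'}\circ u^{(i)}_{j'j''}=u^{(i)}_{jj''}$, needed for the pairs $(j,i)$ to form an honest cofiltered diagram, is likewise guaranteed only above a threshold $\chi(j,j',j'')$ depending on the triple. When the index set $J$ is infinite (nothing in the definition bounds it, or even makes it countable), these thresholds admit no uniform choice, and the naive relation ``$(j',i')\ge(j,i)$ iff $j'\ge j$, $i'\ge i$, $i\ge\psi(j,j')$'' is not even transitive; so ``organizing the pairs into a cofiltered category'' is precisely the step that fails as described. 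The paper eliminates this problem structurally rather than combinatorially: since the transitions $Z_{j'}\to Z_j$ are open immersions, all $Z_j$ are open subschemes of a single $Z':=Z_{j_0}$; the smooth case is applied \emph{once}, to $Z'$, writing $Z'=\varprojlim_\beta Z'_\beta$ with all $Z'_\beta$ open in one smooth $X$-scheme $Z''$; then each quasi-compact open $Z_j\subseteq Z'$ is spread out to an open $W_j\subseteq Z'_{\beta(j)}$, and the double system $\{W_j\cap Z'_\beta\}$ consists of open subschemes of $Z''$ whose transition maps are the inclusions. A diagram of open subschemes of a fixed scheme is automatically coherent --- open immersions are monomorphisms, so there is at most one map between any two objects over $Z''$ and no identities to verify --- and its limit is $Z$. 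In short: one should descend the open subschemes, not the morphisms; any repair of your assembly step is essentially forced to reproduce this construction.
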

\begin{proof}
(i) Consider first the case, when $Z$ is smooth over $Y$. By definition of essentially smooth morphisms, we can write $Y=\lim\limits_{\longleftarrow} Y_\alpha$, where each $Y_\alpha$ is smooth over $X$ and the transition morphisms are open affine embeddings. We can find $\alpha$ and a $Y_\alpha$-scheme $Z_\alpha$ such that $Z=Z_\alpha\times_{Y_\alpha}Y$ (see~\cite[Prop.~17.7.8(ii)]{EGAIV.4}). For $\beta\succeq\alpha$, set $Z_\beta:=Z_\alpha\times_{Y_\alpha}Y_\beta$. Then $Z=\lim\limits_{\longleftarrow} Z_\beta$. Note that there is $\gamma$ such that for $\beta\succeq\gamma$ the scheme $Z_\beta$ is smooth over $Y_\beta$ (indeed, note that the projection $Z_\alpha\to Y_\alpha$ is smooth over the points of $Y$, then use openness of smoothness and the assumption that our schemes are Noetherian). Then $Z_\beta$ is also smooth over $X$, and we see that $Z$ is essentially smooth over $X$.

Now consider the general case. Write $Z=\lim\limits_{\longleftarrow} Z_\alpha$, where $Z_\alpha$ are smooth over $Y$ and the transition morphisms are open affine embeddings. Fix some $\alpha_0$ and set $Z':=Z_{\alpha_0}$. We may assume that all $Z_\alpha$ are open subschemes of $Z'$. By the already proved, $Z'$ is essentially smooth over $X$, so we can write $Z'=\lim\limits_{\longleftarrow} Z'_\beta$, where $Z'_\beta$ are smooth over $X$ and the morphisms are open affine embeddings. Again, we may assume that all $Z'_\beta$ are open subschemes of a fixed $Z''$ smooth over $X$. For each $\alpha$, since $Z_\alpha$ is open in $Z'$, we can find $\beta(\alpha)$ and an open subscheme $W_\alpha\subset Z'_{\beta(\alpha)}$ such that
\[
    Z_\alpha=\lim\limits_{\beta\succeq\beta(\alpha)}W_\alpha\cap Z'_\beta.
\]
(Indeed, we reduce to the case, when $Z''$ is affine, in which case $Z'_\beta$, $Z'$, and $Z_\alpha$ are affine as well.) Now $Z$ is the limit of the filtered system $W_\alpha\cap Z'_\beta$, and we see that it is essentially smooth over~$X$.

(ii) is clear.
\end{proof}

\subsubsection{Nisnevich semisheaves}
Torsors can be glued in the Nisnevich topology. However, torsors that are isomorphic locally in the Nisnevich topology need not be isomorphic. Thus, the presheaf of first non-abelian cohomology satisfies only one part of the sheaf axiom. We call such presheaves \emph{semisheaves}. Let us give the definitions.

\begin{definition}\label{def:ElemDistSq}
An \emph{elementary distinguished square\/} is a Cartesian diagram of schemes:
\begin{equation}\label{eq:ElemDistSq}
\begin{CD}
W @>>> V\\
@VVV @VV p V\\
U @>j>> Y,
\end{CD}
\end{equation}
where $p$ is \'etale, $j$ is an open embedding, and $p^{-1}(Y-U)\to Y-U$ is an isomorphism, where we equip the corresponding closed subsets with the reduced scheme structures.
\end{definition}

\begin{definition}\label{def:ElemDistSqDescend}
Let $X$ be a Noetherian scheme and $\bF$ be a presheaf of pointed sets on $\Sm'/X$. We say that $\bF$ is a \emph{Nisnevich semisheaf of pointed sets\/} if for any elementary distinguished square~\eqref{eq:ElemDistSq} in $\Sm'/X$ the corresponding map
\[
    \bF(Y)\to\bF(U)\times_{\bF(W)}\bF(V)
\]
is surjective.
\end{definition}

\begin{remarks}\label{rem:Zariski}
(i) If $Y=Y_1\cup Y_2$ is a Zariski cover of $Y$, then
\[\begin{CD}
Y_1\cap Y_2 @>>> Y_1\\
@VVV @VVV\\
Y_2 @>>> Y
\end{CD}\]
is an elementary distinguished square. Thus, every Nisnevich semisheaf also satisfies gluing in the Zariski topology.

(ii) Our definition of a Nisnevich semisheaf is similar to~\cite[Ch.~3, Def.~1.3]{MorelVoevodsky} except that our schemes are essentially smooth rather than smooth over $X$, and we consider semisheaves of pointed sets rather than sheaves of sets. \end{remarks}

\subsubsection{A Grothendieck--Serre type theorem for Nisnevich semisheaves}\label{sect:GrSerre}
Let $X$ be a smooth integral affine $k$-scheme, where $k$ is a field. Let $\bF$ be a Nisnevich semisheaf of pointed sets on $\Sm'/X$. We will formulate the properties under which $\bF$ satisfies a Grothendieck--Serre type statement.

\textbf{(Lim)} $\bF$ commutes with filtered inverse limits, provided that the transition morphisms are open embeddings.

The next property is a version of a local triviality of $\bF$. If $\pi\colon Y\to X$ is an essentially smooth morphism used to view $Y$ as an object of $\Sm'/X$, then we denote the value of $\bF$ on $Y$ by $\bF(Y,\pi)$, when we need to emphasize the dependence on $\pi$.

\textbf{(LT)} Assume that $W$ is an affine integral semilocal $k$-scheme such that all its closed points are finite over $k$, and let $U\subset W$ be a closed subscheme. Assume that we are given two essentially smooth $k$-morphisms $p_1,p_2\colon W\to X$ such that $p_1|_U=p_2|_U$, and this morphism is essentially smooth. Then there is a
finite \'etale $k$-morphism $\pi\colon W'\to W$ with a section $\Delta\colon U\to W'$ and an isomorphism of semisheaves $(p_1\circ\pi)^*\bF\simeq(p_2\circ\pi)^*\bF$ restricting to the identity isomorphism on $U$.

This property requires some explanation. First of all, $p_i\circ\pi$ are essentially smooth by Lemma~\ref{lm:EssSm}, so the pullbacks of semisheaves make sense (again, by Lemma~\ref{lm:EssSm}). The isomorphism amounts to compatible bijections for any essentially smooth $\psi\colon T\to W'$:
\[
    \bF(T,p_1\circ\pi\circ\psi)\xrightarrow{\simeq}\bF(T,p_2\circ\pi\circ\psi).
\]
Finally, the condition that this isomorphism restricts to the identity on $U$ amounts to having for every~$\psi$ as above a commutative diagram
\begin{equation}\label{eq:CD}
\begin{CD}
\bF(T,p_1\circ\pi\circ\psi)@>\simeq>>\bF(T,p_2\circ\pi\circ\psi)\\
@VVV @VVV\\
\bF(T\times_{W'}U,p_1\circ\pi\circ\psi\circ pr_1)@=\bF(T\times_{'W}U,p_2\circ\pi\circ\psi\circ pr_1),
\end{CD}
\end{equation}
where $pr_1$ denotes the projection $T\times_{W'}U\to T$. Here the bottom identification comes from the fact that both morphisms are equal:
\[
    p_i\circ\pi\circ\psi\circ pr_1=p_i\circ\pi\circ\Delta'\circ pr_2=(p_i|_U)\circ pr_2.
\]
(In particular, this morphism is essentially smooth.)

The last property we need is a ``Section property'', which is weaker, than homotopy invariance (at least, if the field $k$ is infinite).

\textbf{(Sec)} Let $U\in\Ob(\Sm'/X)$ be an affine integral semilocal scheme such that all its closed points are finite over $k$. Assume that $Z$ is a closed subscheme of $\A^1_U$ finite over~$U$. Let
\[
    \cE\in\Ker\Bigl(\bF(\A^1_U)\to\bF(\A^1_U-Z)\Bigr),
\]
then for every section $\Delta\colon U\to\A^1_U$ of the projection $\A^1_U\to U$ we have $\Delta^*\cE=\star$.

\begin{theorem}\label{th:GrSerreFun}
Let $X$ be a smooth integral affine $k$-scheme, where $k$ is a field. Let $\bF$ be a Nisnevich semisheaf on $\Sm'/X$ satisfying properties (Lim), (LT), and (Sec) above. Then for any finite set of schematic points $\by\subset X$ the map $\bF(\cO_{X,\by})\to\bF(K(X))$, where $K(X)$ is the fraction field of $X$, has a trivial kernel.
\end{theorem}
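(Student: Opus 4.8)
The plan is to prove Theorem~\ref{th:GrSerreFun} by adapting the Colliot-Th\'el\`ene--Ojanguren/Fedorov--Panin ``geometric presentation'' strategy to the abstract semisheaf setting. Let $\cE\in\Ker(\bF(\cO_{X,\by})\to\bF(K(X)))$. By property (Lim), the value $\bF(\cO_{X,\by})$ is the colimit of $\bF(U)$ over affine open neighborhoods $U$ of $\by$ in $X$, so I may represent $\cE$ by a class over some such $U$ that dies over $K(X)$; shrinking $U$ further and again using (Lim), I may assume $\cE$ becomes trivial after removing a closed subscheme $Z\subset U$ that does not contain any point of $\by$. Thus I reduce to showing: a class $\cE\in\Ker(\bF(U)\to\bF(U-Z))$ restricts to $\star$ on $\Spec\cO_{X,\by}$.

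\textbf{Geometric presentation.} The heart of the argument is to put the pair $(U,Z)$ into a form amenable to the section property (Sec). First I would shrink $U$ around $\by$ so that it becomes semilocal (replacing $U$ by $\Spec\cO_{X,\by}$, which is legitimate thanks to (Lim) and to the essential smoothness of this spectrum over $X$). Then, using that $X$ is smooth affine over $k$ and that $\by$ has finite residue field extensions over $k$ after suitable normalization, I would construct an essentially smooth morphism realizing $\Spec\cO_{X,\by}$ (or a neighborhood) as a relative affine line over a smaller-dimensional semilocal base $W$, arranged so that the vanishing locus $Z$ becomes finite over $W$. Concretely, I expect to produce a Cartesian-type setup in which $U\cong\A^1_W$ (Nisnevich-locally, via an elementary distinguished square) with $Z\subset\A^1_W$ finite over $W$, exactly matching the hypotheses of (Sec). Applying (Sec) to the image of $\cE$ along a section $\Delta\colon W\to\A^1_W$ hitting $\by$ then yields $\Delta^*\cE=\star$.

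\textbf{Comparing the two projections.} The subtle point is that the geometric presentation gives \emph{two} essentially smooth maps to $X$: the original structure map and the one factoring through $\A^1_W\to W\to X$, and these need not agree. This is precisely what property (LT) is designed to reconcile: it furnishes a finite \'etale cover $W'\to W$ with a section over the relevant closed subscheme, together with an isomorphism of the two pullback semisheaves compatible with the identity on that subscheme (the commutative diagram~\eqref{eq:CD}). I would use (LT) to transport $\cE$ between the two structure-map incarnations of $\bF$ so that the class produced by the geometric presentation really does compute the original $\cE$ at $\by$. The Nisnevich semisheaf descent (Definition~\ref{def:ElemDistSqDescend}), including its Zariski consequence from Remark~\ref{rem:Zariski}(i), is what lets me glue the local data from the elementary distinguished square back to a statement over $U$.

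\textbf{Main obstacle.} I expect the genuine difficulty to lie not in the formal deduction from (Sec) but in \emph{constructing the geometric presentation}: finding, for the semilocal ring $\cO_{X,\by}$ and the ``bad'' closed subscheme $Z$, a fibration over a smaller base in which $Z$ becomes finite and flat over the base and $U$ becomes an affine line Nisnevich-locally. This is the place where smoothness of $X$, affineness, and the hypothesis that $k$ is a field all get used (via versions of the Quillen trick / Gabber presentation / normalization along $Z$), and where care is needed to keep everything essentially smooth so that $\bF$ can be evaluated and so that Lemma~\ref{lm:EssSm} applies to the composed morphisms. Matching this presentation up with (LT) so that the comparison isomorphism restricts to the identity along $\by$ is the delicate bookkeeping step, and getting the base points and sections to line up is where I anticipate spending most of the effort.
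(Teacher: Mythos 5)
Your outline has the right ingredients ((Lim) to spread out, an elementary distinguished square to descend to an affine line, (LT) to reconcile two structure maps, (Sec) to finish), but the geometric presentation you propose is the wrong one for these axioms, and this is a genuine gap rather than bookkeeping. You want to realize $\Spec\cO_{X,\by}$ (Nisnevich-locally) as the \emph{total space} $\A^1_W$ of an affine line over a smaller-dimensional semilocal base $W$, with $Z$ finite over $W$, and then apply (Sec). That is the Colliot-Th\'el\`ene--Ojanguren presentation, and it is incompatible with the present setting in two ways. First, (Sec) requires its base $U$ to be an object of $\Sm'/X$, and the semisheaf axiom only applies to squares in $\Sm'/X$; but your $W$ (a semilocal ring of points of some $\A^{\dim X-1}_k$) and $\A^1_W$ carry no natural essentially smooth morphism to $X$ --- the natural map in your construction is the fibration $X'\to S$, which goes the other way. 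Second, and more fundamentally, even if (Sec) could be invoked, it would only give $\Delta^*\cE=\star$ for sections $\Delta\colon W\to\A^1_W$, i.e.\ triviality of $\cE$ restricted to the closed subscheme $\Delta(W)$; what you need is triviality on the \emph{ambient} semilocal ring $\cO_{X,\by}=\cO_{\A^1_W,\by}$. Passing from one to the other requires homotopy invariance $\bF(W)\simeq\bF(\A^1_W)$, which is precisely the axiom the paper refuses to assume ((Sec) is explicitly weaker): in Colliot-Th\'el\`ene--Ojanguren, homotopy invariance is what upgrades triviality at one section to triviality of the whole class over $\A^1_W$. With (Sec) alone your argument cannot close. (There is also a residue-field obstruction to a section of $\A^1_W\to W$ ``hitting $\by$'' at all: a section through a closed point $x$ forces $k(x)=k(w)$ for its image $w$, which can fail, e.g.\ for inseparable residue fields.)

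The paper's proof goes ``up'' in dimension instead. After reducing by (Lim) to closed points $\bx$ and $R:=\cO_{X,\bx}$, it constructs (Propositions~\ref{pr:Fibration} and~\ref{pr:NiceTriple}) a smooth relative curve $C:=X'\times_S\Spec R$ over $\Spec R$ \emph{itself}, with a diagonal section $s\in C(R)$, an $R$-finite closed $Z\subset C$, and a class $\cE'\in\Ker\bigl(\bF(C)\to\bF(C-Z)\bigr)$ satisfying $s^*\cE'=\cE$; here $C$ is viewed in $\Sm'/X$ via $\phi\colon C\to X'\subset X$. Property (LT) is then used (Proposition~\ref{pr:NiceTripleImproved}) to switch the $X$-structure of a semilocal piece of $C$ from $\phi$ to the one factoring through $\Spec R\to X$, and lemmas of \v{C}esnavi\v{c}ius produce an \'etale $R$-morphism $C\to\A^1_R$ mapping $Z$ isomorphically onto $Z'$ with $Z=Z'\times_{\A^1_R}C$. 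The semisheaf axiom applied to the resulting elementary distinguished square descends the class to $\cE'''\in\Ker\bigl(\bF(\A^1_R)\to\bF(\A^1_R-Z')\bigr)$ with $s^*\cE'''=\cE$ (Proposition~\ref{pr:A1Descend}); all schemes involved lie in $\Sm'/X$ because $\A^1_R\to\Spec R\to X$ is essentially smooth. Now (Sec) applies with base $U=\Spec R$ --- which \emph{is} in $\Sm'/X$ --- and the section $s$, and its section-wise conclusion is exactly the desired statement $\cE=s^*\cE'''=\star$. So the missing idea in your proposal is the base change of the fibration to $R$ itself together with the diagonal section: the original semilocal scheme must be the base and the section of the affine line appearing in (Sec), not an open piece of its total space.
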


We prove this theorem in Section~\ref{sect:EndPrfGrSerreFun} after some preliminaries. The utility of working with semisheaves is demonstrated by the following important corollary. Consider a family version of property~(Sec):

  \textbf{(SecF)} Let $U\in\Ob(\Sm'/X)$ be an integral affine semilocal scheme such that all its closed points are finite over $k$. Assume that $Z$ is a closed subscheme of $\A^1_U$ finite over~$U$. Let $Y\in\Ob(\Sm'/k)$ be an affine scheme, and
   \[
    \cE\in\Ker\Bigl(\bF(\A^1_U\times_kY)\to\bF((\A^1_U-Z)\times_kY)\Bigr),
   \]
   then for every section $\Delta\colon U\times_kY\to\A^1_U\times_kY$ of the projection $\A^1_U\times_kY\to U\times_kY$ we have $\Delta^*\cE=\star$.

\begin{corollary}
\label{cor:Family}
  In the notation of the theorem assume that the Nisnevich semisheaf $\bF$ satisfies properties (Lim), (LT), and (SecF) above. Then for any finite set of schematic points $\by\subset X$ and any affine scheme $Y\in\Ob(\Sm'/k)$ the map \[\bF(\cO_{X,\by}\otimes_kk[Y])\to\bF(K(X)\otimes_kk[Y]),\] where $K(X)$ is the fraction field of $X$, has a trivial kernel.
\end{corollary}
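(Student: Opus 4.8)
The plan is to deduce the corollary from Theorem~\ref{th:GrSerreFun} by applying that theorem to a suitably twisted semisheaf. Given the affine scheme $Y\in\Ob(\Sm'/k)$, I would define a presheaf $\bF'$ of pointed sets on $\Sm'/X$ by $\bF'(T):=\bF(T\times_kY)$, where $T\times_kY$ is regarded as an $X$-scheme via the projection $\mathrm{pr}_T\colon T\times_kY\to T$ followed by the structure morphism $T\to X$. Since $Y$ is essentially smooth over $k$, the projection $\mathrm{pr}_T$ is essentially smooth (it is a base change of $Y\to\Spec k$), so $T\times_kY$ is essentially smooth over $X$ by Lemma~\ref{lm:EssSm}; thus $T\times_kY\in\Ob(\Sm'/X)$ and $\bF'$ is well defined and functorial. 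The target is then clear: once $\bF'$ is shown to be a Nisnevich semisheaf satisfying (Lim), (LT), and (Sec), Theorem~\ref{th:GrSerreFun} gives that $\bF'(\cO_{X,\by})\to\bF'(K(X))$ has trivial kernel, and since $\Spec\cO_{X,\by}\times_kY=\Spec(\cO_{X,\by}\otimes_kk[Y])$ and $\Spec K(X)\times_kY=\Spec(K(X)\otimes_kk[Y])$, this is exactly the assertion to be proved.

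First I would check that $\bF'$ is a Nisnevich semisheaf and satisfies (Lim). Applying $-\times_kY$ to an elementary distinguished square~\eqref{eq:ElemDistSq} again yields a Cartesian square whose right vertical map stays \'etale and whose bottom horizontal map stays an open embedding, both properties being stable under base change. The only delicate point is the condition on the closed complement with its reduced structure: since $Y$ is smooth over $k$, it is geometrically reduced, so the product of $Y$ with the reduced complement of the original square is again reduced. Hence the reduced complement of the base-changed square is the product of the original reduced complement with $Y$, and the isomorphism over it base-changes to an isomorphism. Therefore $-\times_kY$ sends elementary distinguished squares to elementary distinguished squares, and the semisheaf property for $\bF'$ follows from that for $\bF$. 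Property (Lim) is immediate, as a filtered limit with open-embedding transition maps remains such after $-\times_kY$, and $\bF$ commutes with it.

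Next I would verify (LT) and (Sec). For (LT), the same finite \'etale cover $\pi\colon W'\to W$ and section $\Delta$ supplied by (LT) for $\bF$ will serve for $\bF'$: given an essentially smooth $\psi\colon T\to W'$, the morphism $\psi\circ\mathrm{pr}_T\colon T\times_kY\to W'$ is again essentially smooth, and feeding it into the isomorphism $(p_1\circ\pi)^*\bF\simeq(p_2\circ\pi)^*\bF$ produces the required bijection $\bF'(T,p_1\circ\pi\circ\psi)\simeq\bF'(T,p_2\circ\pi\circ\psi)$; naturality and the restriction to the identity on $U$ are inherited from the corresponding properties for $\bF$, so no family version of (LT) is needed. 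For (Sec), the key identity is $\A^1_U\times_kY=\A^1_{U\times_kY}$, whence an element $\cE\in\Ker\bigl(\bF'(\A^1_U)\to\bF'(\A^1_U-Z)\bigr)$ is exactly an element of $\Ker\bigl(\bF(\A^1_U\times_kY)\to\bF((\A^1_U-Z)\times_kY)\bigr)$, the kernel appearing in (SecF) for $\bF$ with the same $U$, $Z$, and $Y$. For a section $\Delta\colon U\to\A^1_U$, the $\bF'$-pullback $\Delta^*\cE$ equals the $\bF$-pullback of $\cE$ along $\Delta\times\Id_Y\colon U\times_kY\to\A^1_U\times_kY$, and $\Delta\times\Id_Y$ is a section of the projection $\A^1_U\times_kY\to U\times_kY$. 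Thus (SecF) for $\bF$ yields $\Delta^*\cE=\star$, which is precisely (Sec) for $\bF'$.

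The whole argument is a base change of the setup along $-\times_kY$, and (SecF) is introduced exactly so that the twisted semisheaf $\bF'$ inherits the section property; correspondingly, I expect the main points requiring care to be bookkeeping rather than conceptual. The two steps deserving most attention are the verification that $T\times_kY$ genuinely lies in $\Sm'/X$ (preservation of essential smoothness and of the Noetherian hypothesis, which is where Lemma~\ref{lm:EssSm} and the smoothness of $Y$ enter) and the preservation of the reduced-complement condition in the distinguished square; both reduce to standard facts about base change and geometric reducedness over a field. With $\bF'$ shown to satisfy the hypotheses of Theorem~\ref{th:GrSerreFun}, the corollary follows immediately from the identification of values displayed above.
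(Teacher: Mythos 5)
Your proposal is correct and follows essentially the same route as the paper: you define the twisted semisheaf $\bF'(T):=\bF(T\times_kY)$, check that it inherits the Nisnevich semisheaf property together with (Lim), (LT) (reusing the same cover $\pi\colon W'\to W$), and (Sec) (from (SecF)), and then apply Theorem~\ref{th:GrSerreFun} to $\bF'$. The only difference is expository: you spell out the preservation of elementary distinguished squares (via geometric reducedness of $Y$) and the (Sec)-from-(SecF) identification, which the paper dismisses as clear, while the paper instead details only the (LT) verification.
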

\begin{proof}
  Consider the functor $\bF'$ sending a scheme $T\in\Ob(\Sm'/X)$ to $\bF(T\times_k Y)$ (we note that $T\times_kY$ is essentially smooth over $X$ by Lemma~\ref{lm:EssSm}). It is clear that it is a Nisnevich semisheaf satisfying properties (Lim) and (Sec).

  Let us prove (LT). Let $U\subset W$ and $p_1,p_2\colon W\to X$ be as in the formulation of (LT). Since $\bF$ satisfies (LT), there is a finite \'etale $k$-morphism $\pi\colon W'\to W$ with a section $\Delta\colon U\to W'$ and an isomorphism of semisheaves $(p_1\circ\pi)^*\bF\simeq(p_2\circ\pi)^*\bF$ restricting to the identity isomorphism on $U$.

  Let $\psi\colon T\to W'$ be an essentially smooth morphism. Then the composition $\tau\colon T\times_kY\to W'\times_kY\to W'$ is also essentially smooth, so we get an isomorphism
  \[
    \bF'(T,p_1\circ\pi\circ\psi)=\bF(T\times_kY,p_1\circ\pi\circ\tau)
    \xrightarrow{\simeq}\bF(T\times_kY,p_2\circ\pi\circ\tau)=\bF'(T,p_2\circ\pi\circ\psi).
  \]
  These isomorphisms gives rise to an isomorphism $(p_1\circ\pi)^*\bF'\simeq(p_2\circ\pi)^*\bF'$ restricting to the identity isomorphism on $U$.
\end{proof}

This corollary should be compared to~\cite[Thm.~1.1]{PaninStavrovaVavilov}, \cite[Thm.~7.1]{PaninNiceTriples}, and~\cite[Thm.~1]{FedorovExotic}.

\subsubsection{Nisnevich conjecture for Nisnevich semisheaves}\label{sect:NisConjSemisheaves} The following is one of the main result of this paper.

\begin{theorem}\label{th:NisnevichFun}
Let $X$ be a smooth integral affine $k$-scheme, where $k$ is an infinite field. Let $\bF$ be a Nisnevich semisheaf on $\Sm'/X$ satisfying properties (Lim), (LT), and (SecF) above as well as:

  {\rm\textbf{(A1F)}} Let $K$ be the function field of $X$. Then the map $\bF(K[t])\to\bF(K(t))$ has a trivial kernel.

Let $\by\subset X$ be a finite set of schematic points, let $f\in k[X]$ be a non-zero function such that its zero set $\{f=0\}$ is smooth over $k$. Then the map
  \[
    \bF((\cO_{X,\by})_f)\to\bF(K(X)),
  \]
  where $K(X)$ is the fraction field of $X$, has a trivial kernel.
\end{theorem}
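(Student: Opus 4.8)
The plan is to reduce the statement to a concrete problem over a relative affine curve and then kill the class by combining the section and local-triviality axioms. Throughout write $d=\dim X$, let $\cO=\cO_{X,\by}$ and $\cO_f=(\cO_{X,\by})_f$, and let $D=\{f=0\}$, a smooth $k$-hypersurface in $X$. Note first that, since $\bF$ satisfies (SecF), it also satisfies the hypotheses of Theorem~\ref{th:GrSerreFun} and Corollary~\ref{cor:Family}; the genuinely new content is the inverted element $f$, i.e.\ the removed divisor $D$, which those divisor-free results do not see.

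I would begin with (Lim) to descend to finite type. Since $\Spec\cO_f=\varprojlim_V(V\setminus D)$, with $V$ running over affine open neighborhoods of $\by$, and $\Spec K(X)$ is the limit of the dense opens, property (Lim) lets me represent $\cE$ by a class $\cE_V\in\bF(V\setminus D)$ that is trivial on $(V\setminus D)\setminus Y_0$ for some closed $Y_0\subsetneq V$ of codimension $\ge1$. Shrinking $V$ I may assume $Y_0$ is of pure codimension one and that $D$ is smooth and principal on $V$. Thus the problem becomes: $\cE_V\in\Ker\bigl(\bF(V\setminus D)\to\bF((V\setminus D)\setminus Y_0)\bigr)$, and I must show its restriction to $\Spec\cO_f$ is $\star$.

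The geometric heart is a presentation of a neighborhood of $\by$ as a relative affine curve carrying both $D$ and $Y_0$ in good position. Using the smoothness of $X$ and of $D$ (and the hypothesis $f\notin\fm_y^2$, which makes $D$ transversal at $\by$), I would construct, after replacing $X$ by a suitable \'etale neighborhood of $\by$, an essentially smooth projection $\pi\colon X\to U$ onto a smooth affine $k$-scheme $U$ with $\dim U=d-1$, of relative dimension one, together with a finite morphism onto $\A^1_U$, so that both $Y_0$ and $D$ are finite over $U$, the points $\by$ map to a finite set $\by'\subset U$, and there is a section through $\by$; this is the ``nice triple'' machinery of Quillen, Gabber and Panin, adapted to carry the divisor $D$ along. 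Passing to an \'etale neighborhood is legitimate because $\bF$ is a Nisnevich semisheaf, and (LT) is exactly what lets me transport $\cE_V$ across the elementary distinguished squares and identify the pullbacks along the two projections that agree on $\by'$. The delicate point is to position $D$ (where the torsor is not even defined) simultaneously with $Y_0$ so that neither obstructs the section.

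After the presentation the task becomes an injectivity (specialization) statement over $U$: I have a class defined on the complement of $D$ in an open of $\A^1_U$, trivial away from $Y_0$, and trivial over the generic point $\Spec K(X)$ of the total space; I must propagate this triviality to the semilocal fibre over $\by'$. I would do this by a homotopy: using (LT) to build, over an auxiliary affine line with parameter $t$, an interpolation between $\cE$ and $\star$ whose generic fibre is an affine line over $K(X)$; property (A1F) governs this generic fibre—this is precisely the $\A^1$ over the function field that (A1F) addresses, and it is what lets one handle the points of $\by$ lying on $D$, where $\cE$ cannot simply be extended across the divisor—while (SecF) evaluates the interpolation at the two endpoint sections to force $\Delta^*\cE=\star$. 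Specializing back yields triviality of $\cE$ over $\Spec\cO_f$. The main obstacle is the combination of the second and fourth steps: one must place the smooth divisor $D$ and the bad locus $Y_0$ compatibly over the lower-dimensional base while keeping the section free, and then circumvent the fact—responsible for the counterexample of Theorem~\ref{th:CounterExample}—that $\cE$ does not extend across $D$. It is exactly to bypass this non-extendability that the argument must invoke (A1F) on an auxiliary affine line together with the family section property (SecF), rather than the single application of the section property that suffices in the Grothendieck–Serre case of Theorem~\ref{th:GrSerreFun}.
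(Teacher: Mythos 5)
Your first two steps (the (Lim) reduction to finite type, and the fibration-into-curves/nice-triple presentation carrying the divisor $D$ along, with (LT) and Nisnevich descent moving the class to an affine line over $R=\cO_{X,\bx}$) follow the paper's actual route (Propositions~\ref{pr:Fibration}, \ref{pr:NiceTriple}, \ref{pr:NiceTripleImproved}, \ref{pr:A1Descend}). After that reduction one is left with precisely this situation: $R$-finite closed subschemes $Y\subset Z\subset\A^1_R$ with $Y$ étale over $R$ (the locus over $D$, where the class is \emph{not defined}), a class $\cE'''\in\Ker\bigl(\bF(\A^1_R-Y)\to\bF(\A^1_R-Z)\bigr)$, and a section $s$ landing in $\A^1_{R_f}-Y$; the theorem then needs the statement that this kernel is trivial. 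This is the paper's Proposition~\ref{pr:A1}, its genuinely new ingredient, and it is exactly here that your proposal has a gap: the ``homotopy interpolation built using (LT)'' that you invoke is not an argument. Property (LT) only produces, after finite étale base change, an identification of the pullbacks of $\bF$ along two morphisms that agree on a closed subscheme; it is not a device for constructing a family joining $\cE$ to $\star$, and no such interpolation is available precisely because $\cE$ does not extend across $Y$ (this non-extendability is the content of Theorem~\ref{th:CounterExample}).

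What the paper actually does in place of your interpolation is a three-case induction on the structure of $Y$. Case 1: if $Y=Y_0\times_k\Spec R$ is \emph{constant}, then $\A^1_R-Y=(\A^1_k-Y_0)\times_k\Spec R$, and Corollary~\ref{cor:Family} (this is where (SecF) enters --- as a family Grothendieck--Serre statement with family parameter the affine scheme $\A^1_k-Y_0$, not as an evaluation at endpoints of a homotopy) reduces triviality to the generic fibre $\A^1_{K(X)}-Y_{K(X)}$, where Zariski gluing across $Z_K-Y_K$ plus (A1F) finish. Case 2: if $Y$ is a disjoint union of sections, the infinite-field hypothesis is used to pick distinct $t_1,\dots,t_d\in k$ and to build, via a Chinese-Remainder/Nakayama argument, an étale endomorphism of a neighborhood of $Z$ in $\A^1_R$ moving $Y$ to the constant subscheme $\{t_1,\dots,t_d\}\times_k\Spec R$; Nisnevich descent along the resulting elementary distinguished square reduces to Case 1. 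Case 3: in general one inducts on $d(Y/\Spec R)=\deg_R Y-(\text{number of components})$, base-changing along a component $V\to\Spec R$ of $Y$ of degree $\ge 2$ so that the diagonal splits off as a section, which strictly decreases the invariant, and then gluing back via another elementary distinguished square. None of these mechanisms (the application of (SecF) through Corollary~\ref{cor:Family} with the line-minus-points as the family parameter, the explicit moving lemma over an infinite field, the degree induction via self-base-change) appears in your sketch, so as written the proposal cannot be completed past the reduction to $\A^1_R$.
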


The notation ``(A1F)'' stands for ``affine lines over fields''. It seems plausible that the condition that $k$ be infinite can be removed by applying the technique of~\cite{CesnaviciusBassQuillen}.

\subsection{Variants}\label{sect:var}
  Our definition of ``essentially smooth'' and the decision to work on $\Sm'/X$ are somewhat ad hoc. We discuss possible variations here.

  Some authors would define ``essentially smooth'' as the limit of smooth schemes, where transition morphisms are \'etale. This definition would work for us as well. In fact, we can work with any category larger than $\Sm'/X$. For example, we can start with a Nisnevich semisheaf $\bF$ on the category of \emph{all\/} schemes over~$X$. The reason is that torsors are defined over any scheme. On the other hand, that would make our Theorems~\ref{th:GrSerreFun} and~\ref{th:NisnevichFun} formally weaker.

  We can work with just affine essentially smooth $X$-schemes. That would require additional checks. For example, we would have to make sure that the schemes $Y$ and $Z$ in Section~\ref{sect:NiceTriples} have affine complements. This is possible but is not very convenient. If we follow this ``affine'' approach, we can also assume that~$U$ in Definition~\ref{def:ElemDistSqDescend} is a principal open subset of $X$, that is, of the form $X_f$ for $f\in k[X]$. The required gluing property for torsors follows then from~\cite[Prop. 2.6(iv)]{ColliotTeleneOjanguren}.

  In principle, we can work with functors defined on $X$-schemes of finite type. In this case, we must either extend $\bF$ to essentially smooth schemes using (Lim) for definition, or repeatedly shrink our schemes. Note that then even the formulation of Theorem~\ref{th:GrSerreFun} should be changed to the following: if $\cE\in\bF(X)$ is such that for some non-empty open subscheme $X'\subset X$ we have $\cE|_{X'}=\star$, then there is an open subscheme $X''\subset X$ \emph{containing} $\by$ such that $\cE|_{X''}=\star$. Theorem~\ref{th:NisnevichFun} should be changed similarly.

  We can replace $\cO_{X,\by}$ in Theorems~\ref{th:GrSerreFun} and~\ref{th:NisnevichFun} by arbitrary integral semilocal domains geometrically regular over $X$ at the price of strengthening (Lim) and using Popescu's Theorem.

  \subsection{Acknowledgements} The author is thankful to Vladimir Chernousov, Philippe Gille, Daniel Krashen, and Ivan Panin for stimulating discussions. The author is thankful to Ofer Gabber, Anastasia Stavrova, and to the referee for useful comments and for finding imprecisions in early versions. The author is partially supported by NSF DMS grants 1406532, 1764391, and 2001516. A~part of the work was done during the author's stay at Max Planck Institute for Mathematics in Bonn.

\section{Proofs of Theorem~\ref{th:GrSerreFun} and Theorem~\ref{th:NisnevichFun}}
In Sections~\ref{sect:Fib}--\ref{sect:DescendA1} we reduce the theorems to questions about affine lines over semilocal rings. This is quite standard and follows the usual strategy of the proof of the Grothendieck--Serre conjecture. Some care is needed because we work with abstract functors and we also have the hypersurface $\{f=0\}$. The proof of Theorem~\ref{th:GrSerreFun} is an easy consequence of this. To prove  Theorem~\ref{th:NisnevichFun}, we need an additional result (Proposition~\ref{pr:A1}), which seems to be totally new. This is where we need additional assumptions that $k$ is infinite. It is curious that in the proof of Proposition~\ref{pr:A1} we are using Corollary~\ref{cor:Family} of Theorem~\ref{th:GrSerreFun}.

\subsection{Fibrations into curves}\label{sect:Fib}
The main goal of this section is to prove the following proposition.
\begin{proposition}\label{pr:Fibration}
  Let $X$ be an integral affine scheme smooth over a field $k$, let $\bx\subset X$ be a non-empty finite set of closed points, let $f\in k[X]$ be such that $f\ne0$ and the hypersurface $\{f=0\}$ is smooth over $k$, and let $Z\subsetneq X$ be a closed subscheme. Then there are:

\stepzero\noindstep An open affine subscheme $S\subset\A^{\dim X-1}_k$. \\
\noindstep An open affine subscheme $X'\subset X$ containing $\bx$ and a smooth morphism $X'\to S$ of pure relative dimension one such that $\{f|_{X'}=0\}$ is $S$-\'etale and $S$-finite, $Z\cap X'$ is $S$-finite.
\end{proposition}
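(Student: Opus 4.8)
The plan is to construct the fibration as a generic linear projection of $X$, after embedding it into an affine space, onto an open subscheme of $\A^{d-1}_k$ with $d:=\dim X$; this is the classical ``fibration into curves'' step and I follow the pattern used in proofs of Grothendieck--Serre-type theorems. So I would first fix a closed embedding $X\hookrightarrow\A^N_k$ and regard affine-linear projections $\phi\colon\A^N_k\to\A^{d-1}_k$ as points of an affine space over $k$; the goal is to choose $\phi$ subject to finitely many conditions, each defining a dense open (equivalently, the complement of a proper closed) subset of this parameter space, and then to cut out $S$ and $X'$ by shrinking.

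The conditions I would impose on $\phi$ are: \textbf{(a)} $\phi$ restricts to a finite morphism $\{f=0\}\to\A^{d-1}_k$ (a generic linear projection of the $(d-1)$-dimensional variety $\{f=0\}$ to $\A^{d-1}_k$ is finite, the obstruction living at infinity in the projective closure); \textbf{(b)} $\phi$ restricts to a finite morphism $Z\to\A^{d-1}_k$ (since $\dim Z\le d-1$, Noether-normalize $Z$ by part of the coordinates and note that module-finiteness is preserved when the polynomial subring is enlarged); \textbf{(c)} at each of the finitely many points $x\in\bx$ the differential $d\phi_x\colon T_xX\to T_{\phi(x)}\A^{d-1}_k$ is surjective, so that $\phi$ is smooth of relative dimension one on an open $V\supseteq\bx$; \textbf{(d)} the critical locus of $\phi|_X$ has codimension at least two in $X$, so $\dim(X\setminus V)\le d-2$ (for a generic $(N-d+1)$-plane $\ker\phi$ the locus of $x$ with $\dim(T_xX\cap\ker\phi)\ge2$ is a codimension-two Schubert condition); and \textbf{(e)} $\phi|_{\{f=0\}}$ is étale at each point of $\bx\cap\{f=0\}$ and, for $x\in\bx\setminus\{f=0\}$, the fibre $\phi^{-1}(\phi(x))$ avoids the ramification locus $\Sigma\subset\{f=0\}$ of $\phi|_{\{f=0\}}$ (a dimension count, using $\dim\Sigma\le d-2$, shows that the $(N-d+1)$-planes through a fixed $x\notin\Sigma$ meeting $\Sigma$ form a proper subfamily). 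Conditions (a)--(e) guarantee in particular that $\phi(\bx)$ is disjoint from the branch locus $B\subset\A^{d-1}_k$ of the finite map $\phi|_{\{f=0\}}$, which is a proper closed subset. Since a finite intersection of dense opens in an affine space over an infinite $k$ has a $k$-point, such a $\phi$ exists; over a finite field I would first re-embed $X$ by a Veronese of large degree so that enough projections become available over $k$, as in the finite-field treatment of the Grothendieck--Serre conjecture.

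Having fixed $\phi$, I would choose, by prime avoidance, a function $g\in k[X]$ vanishing on the closed set $X\setminus V$ (of dimension $\le d-2$ by (d)), non-vanishing at every point of $\bx$, and not identically zero on any component of $Z$ or of $\{f=0\}$; then $X':=X_g$ is an affine open with $\bx\subset X'\subseteq V$, and $\{f=0\}\cap\{g=0\}$ and $Z\cap\{g=0\}$ have dimension $\le d-2$, so their images $D_f,D_Z\subset\A^{d-1}_k$ are proper closed. I then set
\[
S:=S_0\cap\bigl(\A^{d-1}_k\setminus(B\cup D_f\cup D_Z)\bigr),
\]
an open affine, where $S_0$ is a principal affine neighbourhood of the finite set $\phi(\bx)$; by (e) and the choice of $g$ we have $\phi(\bx)\subset S$. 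Over $S$ the divisor $\{g=0\}$ meets neither $\{f=0\}$ nor $Z$, so $\{f|_{X'}=0\}$ and $Z\cap X'$ agree over $S$ with $\{f=0\}$ and $Z$; hence they are $S$-finite by (a),(b), and $\{f|_{X'}=0\}$ is $S$-étale by (e), while $X'\to S$ is smooth of pure relative dimension one because $X'\subseteq V$.

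The step I expect to be the main obstacle is precisely this last reconciliation: smoothness of $X'\to S$ is an open condition on the total space and forces shrinking $X$ to the open $V$, whereas $S$-finiteness and $S$-étaleness of $\{f=0\}$ and $S$-finiteness of $Z$ concern entire fibres and are destroyed by removing points from fibres. The mechanism that makes them compatible is the codimension-two bound (d) on the critical locus, which keeps the removed loci $\{f=0\}\cap\{g=0\}$ and $Z\cap\{g=0\}$ of dimension $\le d-2$, so that their images are proper closed in the base and can be deleted from $S$ without losing $\phi(\bx)$; verifying (d) and that all of (a)--(e) hold for a common generic $\phi$ (and its finite-field analogue) is where the real work lies.
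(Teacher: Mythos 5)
Your overall strategy (generic projection to $\A^{\dim X-1}_k$, Bertini-type genericity conditions, infinite and finite fields treated separately) is essentially the paper's, but there is a genuine gap at exactly the point you flag as the ``main obstacle'', and your conditions (a)--(e) are not strong enough to close it. The problem is with the claim ``by (e) and the choice of $g$ we have $\phi(\bx)\subset S$''. Your conditions control smoothness of $\phi|_X$ only \emph{at the points of $\bx$} (condition (c)), and control the fibres $\phi^{-1}(\phi(x))$ only in their intersection with $\{f=0\}$ (condition (e)); they say nothing about the behaviour of $\phi|_X$ at the \emph{other} points of these fibres, in particular at the points of $\phi^{-1}(\phi(x))\cap Z$. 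Now your $g$ is required to vanish on the critical locus $X\setminus V$. If some point $z\in\phi^{-1}(\phi(x))\cap Z$ happens to be a critical point of $\phi|_X$ --- and nothing in (a)--(e) forbids this, since (d) only bounds the dimension of the critical locus --- then necessarily $g(z)=0$, hence $z\in Z\cap\{g=0\}$, hence $\phi(x)\in D_Z$, and your own definition of $S$ then removes $\phi(x)$ from $S$. At that point either $\bx\not\subset X'$ over $S$, or (if you keep $\phi(x)$ in $S$) the scheme $Z\cap X'$ fails to be closed in the finite $S$-scheme $Z\cap\phi^{-1}(S)$ and so is not $S$-finite. The same defect occurs, less visibly, for $\{f=0\}$: even granting (e), your prime-avoidance list for $g$ does not include the points of $\phi^{-1}(\phi(\bx))\cap\{f=0\}$ other than $\bx$ itself, so $g$ may vanish there and put $\phi(x)$ into $D_f$.

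The missing ingredient is precisely condition (ii) of the paper's auxiliary Proposition (its Prop.~\ref{pr:GoodNeighb}): one must demand, as a genericity condition on the projection, that for each $x\in\bx$ the \emph{entire} fibre $\phi^{-1}(\phi(x))\cap X$ is a smooth curve over $k(\phi(x))$ and that $\phi^{-1}(\phi(x))\cap\{f=0\}$ is \'etale over $k(\phi(x))$ --- not merely that $d\phi_x$ is surjective. This is still a dense open condition (a generic fibre through $x$ is a generic linear section through $x$, and one cites the transversality theorem of SGA~4, Exp.~XI, as the paper does), and it guarantees that all points of $\phi^{-1}(\phi(\bx))\cap(Z\cup\{f=0\})$ lie in the smooth locus $V$; you can then add these finitely many points to the prime-avoidance conditions on $g$, and your construction of $S$ goes through. (The paper organizes this slightly differently: it works with the projective closures throughout, gets finiteness from ``projective $+$ quasi-finite $=$ finite'' together with the condition that the fibres through $\bx$ miss the boundary at infinity, and then deletes from $S$ the images of the non-smooth loci of $\psi|_Z$ and $\psi|_Y$, which miss $\psi(\bx)$ exactly because of this whole-fibre condition.) Separately, your finite-field fallback is thinner than what is actually needed: a single Veronese re-embedding plus linear projections does not obviously produce a $k$-point in the relevant dense opens over a finite field; the paper instead runs an induction with Poonen's finite-field Bertini, using hypersurfaces of increasing degrees (whence the weighted projective target).
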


If the field $k$ is algebraically closed, the statement is similar to Artin~\cite[Exp.~XI, Prop.~3.3]{SGA4-3} and can be proved along the same lines. Some new ideas are needed if $k$ is finite, or if the residue fields of the points of $\bx$ are not separable over~$k$. We follow~\cite{FedorovMixedChar}. For integers $l_0$, $l_1$, \ldots, $l_{n-1}$ let $\P_k(l_0,l_1,\ldots,l_{n-1})$ denote the corresponding $(n-1)$-dimensional weighted projective space, see~\cite[Sect.~3.6]{FedorovMixedChar}. We start the proof with an auxiliary proposition.

\begin{proposition}\label{pr:GoodNeighb}
Let $k$ be a field, let $N_1$ be a positive integer, and let $\overline X\subset\P_k^{N_1}$ be a closed subscheme of pure dimension $n$, let $X\subset\overline X$ be an open subscheme smooth over $k$ and let $\bx\subset X$ be a non-empty finite set of closed points. Let $T_1$ and~$T_2$ be closed subsets of $\overline X$ of dimensions at most $n-1$ and $n-2$ respectively such that $\bx\cap T_2=\emptyset$. Let~$\overline Y$ be a closed codimension one subset of $\overline X$ such that $\overline Y\cap X$ is smooth over $k$. For an integer~$r$ consider the $r$-fold Veronese embedding $\P_k^{N_1}\hookrightarrow\P_k^{N_r}$ and identify $\overline X$ with a closed subscheme of $\P_k^{N_r}$, using this embedding. Then there are a positive integer $r$ and sections $\sigma_0\in H^0(\P_k^{N_r},\cO(1))$, $\sigma_1\in H^0(\P_k^{N_r},\cO(l_1))$, \ldots, $\sigma_{n-1}\in H^0(\P_k^{N_r},\cO(l_{n-1}))$ for some positive integers $l_i$ such that for all $x\in\bx$ we have

\stepzero\noindstep\label{i} $\sigma_0(x)\ne0$.

\noindstep\label{GoodNeighb_Transversal} Let $\psi\colon\P^{N_r}_k\dashrightarrow\P_k(1,l_1,\dotsc,l_{n-1})$ be the rational morphism defined by the sections $\sigma_i$. Then the subscheme $\psi^{-1}(\psi(x))\cap X$ is smooth of dimension one over $k(\psi(x))$, while $\psi^{-1}(\psi(x))\cap X\cap\overline Y$ is \'etale over $k(\psi(x))$.

\noindstep\label{iii} $\psi^{-1}(\psi(x))\cap T_1$ is finite.

\noindstep\label{iv} $\psi^{-1}(\psi(x))\cap T_2=\emptyset$.

\noindstep\label{v} $\{\sigma_0=\sigma_1=\dotsb=\sigma_{n-1}=0\}\cap  T_i=\emptyset \text{ for }i=1,2$.
\end{proposition}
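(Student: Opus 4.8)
The plan is to obtain $\psi$ as (the restriction to $\overline{X}$ of) a generic projection $\P_k^{N_r}\to\P_k(1,l_1,\dots,l_{n-1})$ after re-embedding by a sufficiently high Veronese, and to check conditions (i)--(v) by a combination of dimension counts and a Bertini-type smoothness argument. The role of the $r$-fold Veronese is to replace the high-degree forms on $\overline{X}$ that we will need by honest linear sections on $\P_k^{N_r}$, so that taking $r$ large gives us access to sections of arbitrarily high degree on the original $\overline{X}$. First I would fix the affine chart: since $\bx$ is finite, after taking $r$ large enough there is a linear form $\sigma_0\in H^0(\P_k^{N_r},\cO(1))$ with $\sigma_0(x)\ne0$ for every $x\in\bx$ (a high Veronese provides enough linear forms to separate finitely many closed points from a hyperplane, even over a finite $k$); this gives (i).

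Working in the chart $\{\sigma_0\ne0\}$, the map $\psi$ becomes the affine map with coordinates $y_i:=\sigma_i/\sigma_0^{l_i}$, the fiber $\psi^{-1}(\psi(x))$ is cut out in $\overline{X}$ by the $n-1$ equations $y_i=y_i(x)$ and hence has expected dimension $n-(n-1)=1$, while the base locus $\{\sigma_0=\dots=\sigma_{n-1}=0\}$ is cut by $n$ equations and has expected dimension $0$. Conditions (iii)--(v) are then standard dimension counts on the space of choices of $\sigma_1,\dots,\sigma_{n-1}$. Imposing the $n-1$ fiber equations on $T_1$ (of dimension $\le n-1$) makes $\psi|_{T_1}$ generically finite, so the fiber over a general point $\psi(x)$ meets $T_1$ in a finite set, giving (iii); since $T_2$ has dimension $\le n-2$ and $x\notin T_2$, a general projection separates $x$ from $T_2$, so the fiber through $x$ misses $T_2$, giving (iv); and imposing all $n$ equations on $T_1$ and $T_2$ empties the base locus along them, giving (v). For (ii) I would invoke Bertini: for a generic projection the generic fiber is a smooth curve and its intersection with the smooth codimension-one locus $\overline{Y}\cap X$ is reduced of dimension zero, i.e.\ \'etale; the content is to force the particular fiber through each $x$ to behave like the generic member.

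The main obstacle is precisely this last point when $k$ is small or imperfect. Over a finite field the classical Bertini theorem fails, and at a point $x$ with $k(x)/k$ inseparable a low-degree section need not cut out a smooth fiber through $x$ at all, so one cannot simply take $\psi(x)$ to be a general point of the base. I would resolve this following \cite{FedorovMixedChar}: use sections of high degree $l_i$ (available after the $r$-fold Veronese) whose jets at the points of $\bx$ are prescribed so that the differential of $\psi$ at each $x$ has the correct rank, making the fiber smooth and transverse to $\overline{Y}$ there, while keeping the $\sigma_i$ general away from $\bx$, and then apply a Poonen-type Bertini theorem over finite fields with the prescribed-jet and dimension constraints imposed simultaneously. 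The weighted target $\P_k(1,l_1,\dots,l_{n-1})$ is what lets the degrees $l_i$ be chosen independently, supplying the flexibility needed to kill the inseparability at the points of $\bx$; once the $\sigma_i$ are produced, the verification of (i)--(v) reduces to the dimension counts and openness-of-smoothness arguments indicated above.
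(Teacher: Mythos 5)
Your outline of the finite-field case is essentially the paper's argument: there one forces $\sigma_i(x)=0$ for $i\ge 1$ and $\sigma_0(x)\ne 0$, so that every $x\in\bx$ maps to the single $k$-rational point $(1:0:\dots:0)$ and the fiber $\psi^{-1}(\psi(x))$ becomes an open subscheme of the global complete intersection $\{\sigma_1=\dots=\sigma_{n-1}=0\}$; an inductive application of a Poonen-type Bertini theorem (Lemma~\ref{lm:Poonen}, taken from~\cite{FedorovMixedChar}) with $F=\bx$ then produces the $\sigma_i$ and yields \eqref{i}--\eqref{v} simultaneously. Note that this normalization is not optional: the sieve controls complete intersections cut out by sections defined over $k$, so it says something about ``the fiber through $x$'' only after that fiber has been identified with such an intersection; your prescription of jets at $\bx$ ``while keeping the $\sigma_i$ general away from $\bx$'' should be read as prescribing exactly this vanishing, which is what the paper does.

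The genuine gap is the infinite-field case. The only tool your proposal offers for condition~\eqref{GoodNeighb_Transversal} is ``a Poonen-type Bertini theorem over finite fields.'' But the second difficulty you flag --- inseparable residue fields $k(x)/k$ --- never occurs over a finite field (finite fields are perfect); it occurs precisely over infinite imperfect fields, where no Poonen-type theorem exists. So as written your proposal has no mechanism covering that case, nor even the infinite perfect case, since classical Bertini controls the generic fiber of a generic projection but not the particular fibers through the points of $\bx$ --- which you yourself identify as ``the content.'' The paper's route for infinite $k$ is different and is the part you are missing: take $r=2$ and all $l_i=1$; over $\bar k$, for each geometric point $x_i$ lying over a point of $\bx$, the linear subspaces through $x_i$ meeting $X_{\bar k}$ and $X_{\bar k}\cap\overline Y_{\bar k}$ transversally form a dense open subset of the Grassmannian (this is~\cite[Exp.~XI, Thm.~2.1(ii)]{SGA4-3}, and it is exactly what the $2$-fold Veronese is for: Bertini through a prescribed point fails without re-embedding); pulling these opens back to the parameter space $V^n$, intersecting over the finitely many $x_i$, and using that $k$ is infinite, one finds a $k$-rational tuple $(\sigma_0,\dots,\sigma_{n-1})$; finally, smoothness of $\psi^{-1}(\psi(x))\cap X$ over $k(\psi(x))$ --- a field that may be inseparable over $k$ --- is obtained by fpqc descent from $\bar k$, not by any jet prescription. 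Without this (or an equivalent) mechanism the proof does not go through for infinite $k$. Relatedly, your claim that the independent weights $l_i$ are what ``kills the inseparability'' has the logic backwards: in the paper the nontrivial weights arise only in the finite-field induction, where inseparability is vacuous, while in the infinite (possibly imperfect) case all weights equal $1$.
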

If $\bx$ consists of a single point and $\overline Y=\emptyset$, then this follows from~\cite[Prop.~3.13]{FedorovMixedChar}. The proof in our case is just a minor modification. We present the proof for the sake of completeness. The proof in the finite field case is significantly different from the proof in the infinite field case.

\begin{proof}[Proof of Proposition~\ref{pr:GoodNeighb} in the case, when $k$ is finite.]
We recall a statement from~\cite{FedorovMixedChar}.
\begin{lemma}\label{lm:Poonen}
     Assume that $T_1$, \ldots, $T_n$ are locally closed subschemes of $\P_k^N$, where $k$ is a finite field. Let $T'\subset T$ be smooth locally closed subschemes of $\P_k^N$, and let~$F$ be a~finite set of closed points of $\P_k^N$. Assume that for all $i$ such that $T_i$ is finite, we have $T_i\cap F=\emptyset$. Then there is a~hypersurface $H\subset\P_k^N$ such that the scheme theoretic intersections $H\cap T$ and $H\cap T'$ are smooth, $F\subset H$, and for $i=1,\dotsc,n$ we have $\dim(H\cap T_i)<\dim T_i$ or $H\cap T_i=\emptyset$.
\end{lemma}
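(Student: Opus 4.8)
The natural approach is Poonen's closed point sieve, the method behind his Bertini theorems over finite fields. Write $q=|k|$, let $S_d=H^0(\P^N_k,\cO(d))$ be the space of degree-$d$ forms and $S=\bigcup_{d\ge0}S_d$, and for a subset $\cP\subset S$ let
\[
  \mu(\cP)=\lim_{d\to\infty}\frac{|\cP\cap S_d|}{|S_d|}
\]
be its density (replace the limit by the upper limit if it does not exist). It suffices to show that the set of $f\in S$ for which $H=\{f=0\}$ satisfies all the listed conditions has positive density; such an $f$ then exists and furnishes the desired hypersurface.

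I would treat the conditions one at a time and then combine them. \textbf{Passing through $F$:} for $d$ large the evaluation map $S_d\to\bigoplus_{P\in F}\kappa(P)$ is surjective, so the forms vanishing on $F$ form a coset of density $\prod_{P\in F}q^{-\deg P}>0$. \textbf{Smoothness of $H\cap T$ and $H\cap T'$:} since $T$ and $T'$ are smooth by hypothesis, this is exactly Poonen's Bertini theorem, giving that $\{f:H\cap T\text{ is smooth of dimension }\dim T-1\}$ has density $\zeta_T(1+\dim T)^{-1}>0$, and likewise for $T'$; at each closed point the joint smoothness condition for $H\cap T$ and $H\cap T'$ still has positive local density, so the sieve yields a positive density for the intersection. \textbf{Dimension drop:} for each $T_i$ of positive dimension, a high-degree $H$ contains no top-dimensional component of $T_i$, so $\{f:\dim(H\cap T_i)<\dim T_i\}$ has density one; for each finite $T_i$ the condition is $H\cap T_i=\emptyset$, of positive density $\prod_{P\in T_i}(1-q^{-\deg P})$. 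This last event is compatible with $F\subset H$ precisely because $T_i\cap F=\emptyset$, so no closed point is forced simultaneously to lie on $H$ (because of $F$) and off $H$ (because of $T_i$).

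The crux is to combine these into a single positive density. Here I would invoke the product structure at the heart of the sieve: each condition above factors as a product of local conditions indexed by closed points, and Poonen's refinement of Bertini with prescribed Taylor coefficients lets one fix the behavior of $f$ at the finitely many ``low-degree'' closed points --- those in $F$ and in the finite $T_i$ --- while retaining positive density for the smoothness of $H\cap T$ and $H\cap T'$ at all remaining points. At a point $P\in F\cap T$ the compatibility is that one can prescribe the jet of $f$ to vanish at $P$ yet keep $df|_T(P)\ne0$, which is possible, so the local density at $P$ is positive. The main obstacle, and the genuine technical engine, is the uniform tail estimate: bounding, independently of $d$, the proportion of $f\in S_d$ for which $H\cap T$ or $H\cap T'$ fails to be smooth at some closed point of large degree. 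This is precisely Poonen's medium-and-high-degree singularity estimate; once it is in hand the finitely many local densities multiply to a positive total density, and a suitable $H$ exists.
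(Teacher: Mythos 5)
Your proposal is correct and is essentially the same approach as the paper's: the paper's own proof is a one-line reduction (discard the empty $T_i$) to \cite[Prop.~3.12]{FedorovMixedChar}, and that proposition is established by precisely the Poonen closed-point-sieve argument you sketch, with prescribed jets at the finitely many low-degree points (those of $F$ and of the finite $T_i$), joint local smoothness densities for the flag $T'\subset T$, and the medium/high-degree tail estimates supplying the positive total density. The only difference is one of packaging: the paper outsources the sieve to the cited reference, while you reconstruct its proof.
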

\begin{proof}
By dropping empty sets, we may assume that all $T_i$ are nonempty. Then the statement becomes~\cite[Prop.~3.12]{FedorovMixedChar}.
\end{proof}

We return to the proof of Proposition~\ref{pr:GoodNeighb} in the finite field case. Let us define the dimension of the empty scheme to be $-1$. We inductively construct $\sigma_0$, \ldots, $\sigma_{n-1}$ such that for all $x\in\bx$ we have $\sigma_0(x)\ne 0$, for $m=1,\dotsc,n-1$ we have $\sigma_m(x)=0$,
\begin{gather}
    \dim(\{\sigma_0=\dotsb=\sigma_m=0\}\cap T_i)<n-m-i,\notag\\
    \dim(\{\sigma_1=\dotsb=\sigma_m=0\}\cap T_i)\le n-m-i,\label{eq:DimEst}
\end{gather}
and the intersections
\[
    \{\sigma_1=\dotsb=\sigma_m=0\}\cap X\text{ and }\{\sigma_1=\dotsb=\sigma_m=0\}\cap X\cap\overline Y
\]
are smooth over $k$ of dimensions $n-m$ and $n-m-1$ respectively.

For $m=0$ we set $T_3=\bx$, $F=T=T'=\emptyset$ and apply Lemma~\ref{lm:Poonen}. We get a hypersurface $H\subset\P_k^{N_1}$ with an equation $\sigma_0\in H^0(\P_k^{N_1},\cO(r))$. We can view it as an element of $H^0(\P_k^{N_r},\cO(1))$. We have $\sigma_0(x)\ne0$ for $x\in\bx$ and $\dim(\{\sigma_0=0\}\cap T_i)<n-i$.

Assume that $\sigma_0$, \ldots, $\sigma_{m-1}$ are already constructed. To construct $\sigma_m$ we apply Lemma~\ref{lm:Poonen} to $\{\sigma_0=\dotsb=\sigma_{m-1}=0\}\cap T_i$, $\{\sigma_1=\dotsb=\sigma_{m-1}=0\}\cap T_i$, $T'=\{\sigma_1=\dotsb=\sigma_{m-1}=0\}\cap X\cap\overline Y$, $T=\{\sigma_1=\dotsb=\sigma_{m-1}=0\}\cap X$, and $F=\bx$.

Note that if at some step the subscheme $T_4:=\{\sigma_1=\dotsb=\sigma_{m-1}=0\}\cap T_1$ is finite, then we cannot apply the lemma because it might happen that $T_4\cap\bx\ne\emptyset$. However, in this case condition~\eqref{eq:DimEst} is automatic when $i=1$, so we apply Lemma~\ref{lm:Poonen} to $\{\sigma_0=\dotsb=\sigma_{m-1}=0\}\cap T_i$ and $\{\sigma_1=\dotsb=\sigma_{m-1}=0\}\cap T_2$.

By construction, $\sigma_0$, \ldots, $\sigma_{n-1}$ satisfy the conditions of the proposition. (Note that $\psi^{-1}(\psi(x))$ is an open subscheme of $\{\sigma_1=\ldots=\sigma_{n-1}=0\}$.)
\end{proof}

\begin{proof}[Proof of Proposition~\ref{pr:GoodNeighb} in the case, when $k$ is infinite.]
We will take $r=2$ and $l_1=\dotsb=l_{n-1}=1$. Set $V:=H^0(\P_k^{N_2},\cO(1))$; we view the vector space $V$ as a scheme over $k$.

\begin{lemma}\label{lm:AlgClosed}
Assume that $k$ is algebraically closed. Let $\overline X$, $X$, $\overline Y$, $T_1$, $T_2$ be as in the statement of Proposition~\ref{pr:GoodNeighb}. Let $x$ be a closed point of $X$. Then there is a non-empty open subset $W\subset V^n$ such that every point $(\sigma_0,\dotsc,\sigma_{n-1})\in W$ satisfies the conditions of the proposition with $\bx=\{x\}$. More precisely, condition~\eqref{GoodNeighb_Transversal} means that $\psi^{-1}(\psi(x))$ intersect $X$ and $X\cap\overline Y$ transversally.
\end{lemma}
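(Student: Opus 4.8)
The plan is to verify the five conditions (i)--(v) separately, showing that each is satisfied on a non-empty Zariski-open subset of the irreducible variety $V^n$; the desired $W$ is then their intersection. Since $r=2$, the $\sigma_i$ are sections of $\cO(1)$ on the doubly-Veronese-embedded $\overline X\subset\P^{N_2}_k$, so imposing $\sigma_i=0$ means cutting $\overline X$ with a hyperplane of $\P^{N_2}_k$. Everything is controlled by two linear subspaces: the \emph{center} $C:=\{\sigma_0=\dots=\sigma_{n-1}=0\}$ of codimension $n$, and the \emph{fibre} $L_x:=\psi^{-1}(\psi(x))$ of codimension $n-1$, which contains $x$ and satisfies $C\subset L_x$. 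Condition (i) asks that $x\notin C$, an obviously non-empty open condition. Once it holds, $L_x$ is cut out by the $n-1$ forms $\tau_i:=\sigma_i-(\sigma_i(x)/\sigma_0(x))\,\sigma_0$, which vanish at $x$; as $\sigma$ runs over the open set $\{\sigma_0(x)\ne0\}$, the tuple $(\tau_1,\dots,\tau_{n-1})$ runs over all of $V_x^{\,n-1}$, where $V_x\subset V$ is the space of forms vanishing at $x$. Thus $L_x$ runs dominantly over all codimension-$(n-1)$ subspaces through $x$, and $C$ over all codimension-$n$ subspaces avoiding $x$; hence any ``general position'' requirement on $L_x$ or on $C$ pulls back to a non-empty open condition on $\sigma$.

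First I would dispose of the avoidance conditions by dimension count. For (v), a general codimension-$n$ linear subspace of $\P^{N_2}_k$ is disjoint from any closed subset of dimension $<n$; as $\dim T_1\le n-1$ and $\dim T_2\le n-2$, a general $C$ misses both. For (iv), the hypothesis $\bx\cap T_2=\emptyset$ gives $x\notin T_2$, so the system $V_x$ is base-point-free along $T_2$, and a general codimension-$(n-1)$ linear section of $T_2$ (dimension $\le n-2$) is empty, whence $L_x\cap T_2=\emptyset$. Condition (iii) is the analogous finiteness statement for $T_1$ (dimension $\le n-1$); projecting from $x$ reduces it to a general linear section of $\P^{N_2-1}_k$, the one subtlety being to exclude lines through $x$ lying in $T_1$. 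Such lines sweep out a cone of dimension $\le n-1$ inside $T_1$, whose image under projection from $x$ has dimension $\le n-2$ and is therefore avoided by a general codimension-$(n-1)$ subspace; hence all fibres over $L_x\cap T_1$ are finite and $L_x\cap T_1$ is finite.

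Next I would treat the transversality condition (ii) via Bertini's theorem. The base locus on $X$ (and on $X\cap\overline Y$) of the linear system $V_x$ of hyperplanes through $x$ is contained in $\{x\}$, so away from $x$ Bertini gives that a general $L_x$ meets the smooth varieties $X$ and $X\cap\overline Y$ transversally; thus $L_x\cap X$ is smooth of dimension $1$ and $L_x\cap X\cap\overline Y$ smooth of dimension $0$ there. At the base point $x$ itself transversality is the requirement that the differentials $d\tau_i|_x$ restrict to a linearly independent system on $T_xX$ (and on $T_x(X\cap\overline Y)$, when $x\in\overline Y$). Since $d\tau_i|_x$ sweeps out all of $T_x^*\P^{N_2}_k$ as $\tau_i$ varies over the hyperplanes through $x$, this is again a non-empty open condition on $\sigma$. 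Combining the two analyses, a general $L_x$ is transversal to $X$ and to $X\cap\overline Y$ everywhere, which is precisely (ii).

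The main obstacle is exactly the presence of the base point $x$: because $x\in X$ lies on every fibre $L_x$, the relevant linear system fails to be base-point-free there, so one cannot simply invoke Bertini or the clean ``general linear section'' dimension bounds. This is what forces the separate transversality computation at $x$ in (ii) and the extra argument excluding lines through $x$ in (iii). A secondary subtlety is that $k$ may have positive characteristic, where Bertini can fail for general members of a linear system; this is precisely the reason for re-embedding by the $2$-uple Veronese (the choice $r=2$), which makes a general hyperplane section transversal in every characteristic, as in~\cite{FedorovMixedChar}. Having shown that each of (i)--(v) holds on a non-empty open subset of the irreducible variety $V^n$, I would take $W$ to be their (finite) intersection, which is non-empty and open.
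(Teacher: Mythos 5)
Your proposal is correct and takes essentially the same route as the paper: both proofs reduce all five conditions to general-position statements about the linear subspaces $C$ and $L_x$ (equivalently, about points of the Grassmannian of codimension $n-1$ subspaces through $x$, onto which the relevant open subset of $V^n$ maps surjectively) and conclude by intersecting non-empty open conditions in the irreducible variety $V^n$. The only differences are in packaging: where you split the transversality condition~\eqref{GoodNeighb_Transversal} into a Bertini argument away from the base point plus a tangent-space computation at $x$ (with the $2$-uple Veronese invoked to handle positive characteristic), the paper cites this statement wholesale as~\cite[Exp.~XI, Thm.~2.1(ii)]{SGA4-3}, and it disposes of conditions \eqref{iii}--\eqref{v} by the inductive dimension counts of the finite-field case rather than by your direct projection-from-$x$ arguments.
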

\begin{proof}
Recall that we have a 2-fold Veronese embedding $\P_k^{N_1}\hookrightarrow\P_k^{N_2}$. Let $\tilde x\in V^\vee$ be a lift of $x$. Let $\Gr_x(N_2+1,n)$ stand for the Grassmannian of codimension $n-1$ projective subspaces containing $x$ in $\P^{N_2}_k$, or, equivalently, of codimension $n$ linear subspaces in $V^\vee$ containing the line $k\tilde x$. It follows from~\cite[Exp.~XI, Thm.~2.1(ii)]{SGA4-3} that there is a non-empty open subset $U\subset\Gr_x(N_2+1,n)$ such that every subspace from $U$ intersects $X$ and $X\cap\overline Y$ transversally.

Let us identify $V^n$ with $\Hom(V^\vee,k^n)$. Let $W'\subset V^n$ be the open subspace defined by the conditions that for all $\sigma=(\sigma_0,\dotsc,\sigma_{n-1})\in W'$ we have $\sigma_0(x)\ne0$, and $\sigma$ is of maximal rank $n$. Then $\Ker\sigma$ is a codimension $n+1$ vector subbundle in the trivial vector bundle $W'\times\A_k^{N_2+1}\to W'$ and $k\tilde x+\Ker\sigma$ is a codimension $n$ subbundle. Thus, $k\tilde x+\Ker\sigma$ gives a surjective morphism $\pi:W'\to\Gr_x(N_2+1,n)$ sending $(\sigma_0,\dotsc,\sigma_{n-1})\in W'$ to the Zariski closure of $\psi^{-1}(\psi(x))$. Set $W'':=\pi^{-1}(U)$, then $W''$ is open and non-empty. By construction, for each $(\sigma_0,\dotsc,\sigma_{n-1})\in W''$ we have $\sigma_0(x)\ne0$ and the intersections $\psi^{-1}(\psi(x))\cap X$ and $\psi^{-1}(\psi(x))\cap X\cap\overline Y$ are transversal. For every other condition of the proposition there is also a non-empty open subset in $V^n$ whose points possess the property (use an inductive argument similar to the proof in the finite field case for properties~\eqref{iii}--\eqref{v}). The intersection of these open subsets is non-empty, since $V^n$ is irreducible; it is the required set $W$.
\end{proof}

We return to the proof of Proposition~\ref{pr:GoodNeighb} in the case of infinite field. Let $\bar k$ be an algebraic closure of $k$. Consider the finite scheme $\bx\times_k\Spec\bar k$. Let $x_1,\dotsc,x_m\in\overline X_{\bar k}$ be all its closed points. Applying the previous lemma to $\overline X_{\bar k}$, $X_{\bar k}$, $\overline Y_{\bar k}$, $(T_1)_{\bar k}$, $(T_2)_{\bar k}$, and $x_i$, we get a dense open subset $\widetilde W_i\subset\widetilde V^n$, where $\widetilde V:=H^0(\P^{N_2}_{\bar k},\cO(1))$. Let $p\colon\widetilde V^n\to V^n$ be the projection. There is a non-empty open subset $W\subset V^n$ such that $p^{-1}(W)\subset\cap_{i=1}^m\widetilde W_i$. Since $k$ is infinite, we can find a $k$-rational point $(\sigma_0,\dotsc,\sigma_{n-1})\in W$. We claim that this point satisfies the conditions of the proposition. All conditions except~\eqref{GoodNeighb_Transversal} are clear.

Let us check condition~\eqref{GoodNeighb_Transversal}. Take $x\in\bx$ and set $s:=\psi(x)$.
We need to check that the scheme theoretic intersection $X\cap\psi^{-1}(s)$ is smooth and 1-dimensional over $s$. Let $x_i\in\bx\times_k\Spec\bar k$ be a point lying over $x$. We have an fpqc cover $x_i\to x\times_k\Spec\bar k\to s\times_k\Spec\bar k\to s$. The pullback of $X\cap\psi^{-1}(s)$ via this fpqc cover is smooth over $x$ and one-dimensional by construction and Lemma~\ref{lm:AlgClosed}. The statement follows by the fpqc descent. We prove similarly that $X\cap\overline Y\cap\psi^{-1}(s)$ is \'etale over $s$. This completes the proof of Proposition~\ref{pr:GoodNeighb}.
\end{proof}

\begin{proof}[Proof of Proposition~\ref{pr:Fibration}]
We may assume that $X$ is a closed subscheme of $\A^{N_1}_k$ for some $N_1>0$. Let~$\overline X$ be the closure of $X$ in $\P^{N_1}_k$. Set $Y:=\{f=0\}$ and let $\overline Y$ be the closure of $Y$ in $\overline X$. Note that $Y$ is of pure dimension $n-1$. Let $\overline Z$ be the closure of $Z$ in $\overline X$.

Apply Proposition~\ref{pr:GoodNeighb} with $T_1:=\overline Y\cup\overline Z$, and $T_2:=(\overline Y-Y)\cup(\overline Z-Z)$. Let $\sigma_i$ be the sections provided by Proposition~\ref{pr:GoodNeighb}. These sections give a rational morphism to a weighted projective space $\psi\colon\overline X\dashrightarrow\P_k(1,l_1,\dotsc,l_{n-1})$. We identify the coordinate chart of $\P_k(1,l_1,\dotsc,l_{n-1})$ given by the condition that the first homogeneous coordinate is non-zero with $\A_k^{n-1}$ (this open subset was denoted by $\hat\A_k$ in~\cite[Sect.~3.6]{FedorovMixedChar}). By Proposition~\ref{pr:GoodNeighb}\eqref{i}, $\psi(\bx)\subset\A_k^{n-1}$.

By Proposition~\ref{pr:GoodNeighb}\eqref{v}, $\psi$ is defined on $\overline Y\cup\overline Z$, and $\psi|_{\overline Y\cup\overline Z}$ is a projective morphism. Using Proposition~\ref{pr:GoodNeighb}\eqref{iii} and semicontinuity of fiber dimensions (see~\cite[Thm.~13.1.3]{EGAIV-3}), we find a neighborhood $S$ of $\psi(\bx)$ in $\A_k^{n-1}$ such that $\psi^{-1}(S)\cap(\overline Y\cup\overline Z)$ is quasi-finite over $S$. Since a quasi-finite projective morphism is finite, we see that $\psi^{-1}(S)\cap(\overline Y\cup\overline Z)$ is finite over $S$. Using property~\eqref{iv} of Proposition~\ref{pr:GoodNeighb}, we may shrink $S$ so that we have
\[
    \psi^{-1}(S)\cap(\overline Y\cup\overline Z)=\psi^{-1}(S)\cap(Y\cup Z).
\]
Using the semicontinuity of fiber dimensions again, we may shrink $S$ so that the fibers of $\psi$ over $S$ are of pure dimension one. Now by~\cite[Thm.~23.1]{MatsumuraCommRingTh} both $\psi$ and $\psi|_Y$ are flat over $S$. Let $W$ be the set of points of $\psi^{-1}(S)$ where $\psi$ is smooth. Since $\psi$ is flat over $S$, $W$ is open. Similarly, the set $W'$ of points of $\psi^{-1}(S)\cap Y$ such that $\psi|_Y$ is smooth is open. Consider the following closed subsets of $Z$ and $Y$ respectively: $\psi^{-1}(S)\cap Z-W$ and $\psi^{-1}(S)\cap Y-W\cap W'$. Since $\psi^{-1}(S)\cap Z$ and $\psi^{-1}(S)\cap Y$ are finite over $S$, $\psi(\psi^{-1}(S)\cap Z-W)$ and $\psi(\psi^{-1}(S)\cap Y-W\cap W')$ are closed in $S$. By Proposition~\ref{pr:GoodNeighb}\eqref{GoodNeighb_Transversal}, $\psi(\bx)$ does not intersect these closed sets. Thus, shrinking $S$, we may assume that $\psi$ is smooth at the points of $\psi^{-1}(S)\cap Z$ and $\psi^{-1}(S)\cap Y$ and $\psi|_Y$ is \'etale at the points of $\psi^{-1}(S)\cap Y$. It remains to take $X'$ to be the set of points of $\psi^{-1}(S)$ where $\psi$ is smooth.
\end{proof}

\subsection{Nice triples}\label{sect:NiceTriples} The goal of this section is to prove Proposition~\ref{pr:NiceTriple} formulated below. The data
\[
    (Y\subset Z\subset C\to\Spec\cO_{X,\bx},s)
\]
constructed below is similar to \emph{nice triples\/} in~\cite{PaninStavrovaVavilov,PaninNiceTriples,FedorovMixedChar}.

\begin{proposition}\label{pr:NiceTriple} Let $X$ be an integral affine scheme smooth over a field $k$, let $\bx\subset X$ be a non-empty finite set of closed points, and set $R:=\cO_{X,\bx}$. Let $\bF$ be a presheaf of pointed sets on $\Sm'/X$ satisfying property~(Lim) of Section~\ref{sect:GrSerre} and let $f\in k[X]$ be such that $f\ne0$ and the hypersurface $\{f=0\}$ is smooth over $k$. Consider an element
\[
    \cE\in\Ker\Bigl(\bF(R_f)\to\bF(K(X))\Bigr).
\]

Then there are:

\stepzero\noindstep a smooth affine $R$-scheme $C$ of pure relative dimension one;

\noindstep a section $s\in C(R)$;

\noindstep $R$-finite closed subschemes $Y\subset Z\subset C$ such that $Y$ is $R$-\'etale and $f|_{s^{-1}(Y)}=0$;

\noindstep\label{Cond:d} an essentially smooth morphism $\phi\colon C\to X$ such that $\phi\circ s\colon\Spec R\to X$ is the canonical morphism and $\phi^{-1}(\{f=0\})\subset Y$;

\noindstep\label{prop:NiceTriple4} an element
\[
    \cE'\in\Ker\Bigl(\bF(C-Y)\to\bF(C-Z)\Bigr)
\]
such that $\left(s|_{R_f}\right)^*\cE'=\cE$, where we view $C-Y$ as an $X$-scheme via $C-Y\hookrightarrow C\xrightarrow{\phi}X$.

Moreover, if $f=1$, then we may arrange the above so that $Y=\emptyset$.
\end{proposition}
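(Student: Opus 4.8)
The plan is to reduce the statement to the geometric input of Proposition~\ref{pr:Fibration} by first spreading $\cE$ out to a finite-type neighborhood and then forming an appropriate fiber product. To spread out, I would write $\Spec R_f=\lim V_f$, where $V$ runs over the affine open neighborhoods of $\bx$ in $X$ and $V_f:=V\setminus\{f=0\}$, the transition maps being open embeddings; property~(Lim) then produces $\cE_0\in\bF(V_f)$ inducing $\cE$ for some such $V$. Since $\Spec K(X)=\lim U$ over the non-empty opens $U\subset V_f$, the hypothesis $\cE|_{K(X)}=\star$ together with a second application of (Lim) lets me shrink $V$ so that $\cE_0|_U=\star$ for some dense open $U\subset V_f$. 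Letting $D\subset V$ be the closure of $V_f\setminus U$, I obtain a proper closed subscheme off which $\cE_0$ is trivial.

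Next I would apply Proposition~\ref{pr:Fibration} to the integral affine $k$-scheme $V$, the points $\bx$, the function $f|_V$, and the closed subscheme $D$. This yields an open affine $S\subset\A^{\dim X-1}_k$ and an open affine $X'\subset V$ containing $\bx$, with a smooth morphism $q\colon X'\to S$ of pure relative dimension one, such that $\{f=0\}\cap X'$ is $S$-finite and $S$-\'etale while $D\cap X'$ is $S$-finite. I would then set $C:=X'\times_S\Spec R$, the structural morphism $\Spec R\to S$ being the composite $\Spec R\xrightarrow{\can}X'\xrightarrow{q}S$, which is essentially smooth by Lemma~\ref{lm:EssSm}(i) since $\Spec\cO_{X,\bx}=\Spec\cO_{X',\bx}$ is essentially smooth over $X'$. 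Thus $C$ is affine, the projection $C\to\Spec R$ is smooth of pure relative dimension one as a base change of $q$, and the first projection followed by $X'\hookrightarrow X$ gives an essentially smooth morphism $\phi\colon C\to X$ by Lemma~\ref{lm:EssSm}(ii). Applying the universal property of the fiber product to $\can$ and $\mathrm{id}_{\Spec R}$ yields the section $s\in C(R)$ with $\phi\circ s=\can$.

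With $C$, $s$, $\phi$ fixed, I would take $Y:=\phi^{-1}(\{f=0\})=(\{f=0\}\cap X')\times_S\Spec R$, which is $R$-finite and $R$-\'etale by base change from $S$, and $Z:=Y\cup\bigl((D\cap X')\times_S\Spec R\bigr)$, which is $R$-finite and contains $Y$. Here $s^{-1}(Y)=\can^{-1}(\{f=0\})=\Spec(R/fR)$, so $f|_{s^{-1}(Y)}=0$ and $s$ carries $\Spec R_f$ into $C-Y$. For the class I would put $\cE':=(\phi|_{C-Y})^*(\cE_0|_{X'\setminus\{f=0\}})\in\bF(C-Y)$, using $C-Y=\phi^{-1}(X'\setminus\{f=0\})$. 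Since $C-Z=\phi^{-1}\bigl(X'\setminus(\{f=0\}\cup D)\bigr)$ and this base is contained in $U$, the restriction of $\cE'$ to $C-Z$ is trivial; and $(s|_{R_f})^*\cE'=(\can|_{R_f})^*\cE_0=\cE$ by the choice of $\cE_0$. If $f=1$ then $\{f=0\}=\emptyset$, so $Y=\emptyset$ as required.

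Given Proposition~\ref{pr:Fibration}, I expect the bulk of this argument to be routine: the essential-smoothness claims all reduce to Lemma~\ref{lm:EssSm}, and the finiteness and \'etaleness of $Y$ and $Z$ follow from the corresponding properties over $S$ by base change. The step requiring genuine care is the spreading-out: one must invoke (Lim) twice and keep track of the closed set $D$ precisely enough that, once the fibration has been chosen, the complement $C-Z$ is forced into the locus $U$ on which $\cE_0$ is trivial. The only conceptual novelty over the usual ``nice triples'' construction is the systematic splitting off of the hypersurface $\{f=0\}$ as the $R$-\'etale subscheme $Y$.
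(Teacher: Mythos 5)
Your proposal is correct and follows essentially the same route as the paper's own proof: two applications of (Lim) to spread $\cE$ out to a neighborhood $V$ together with a proper closed locus ($D$, playing the role of the paper's $Z''$) off which it is trivial, then Proposition~\ref{pr:Fibration} applied to $(V,\bx,f|_V,D)$, and finally $C:=X'\times_S\Spec R$ with the diagonal section and $Y$, $Z$, $\phi$, $\cE'$ defined by pullback exactly as in the paper. The only differences are cosmetic (tracking the closure of $V_f\setminus U$ rather than the complement of an affine open, and spelling out the essential-smoothness checks via Lemma~\ref{lm:EssSm} that the paper leaves as ``by construction'').
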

Some comments are in order. The condition $f|_{s^{-1}(Y)}=0$ ensures that $s(\Spec R_f)\subset C-Y$. Thus the condition $\left(s|_{R_f}\right)^*\cE'=\cE$ makes sense.

Finally, note that $\Spec R$ has two structures of an $X$-scheme: the obvious one and the one obtained via the composition $\Spec R\xrightarrow{s}C\xrightarrow{\phi}X$. The condition on $\phi\circ s$ ensures that these two structures coincide.

\begin{proof}
    Applying property (Lim) of $\bF$ we may find an affine open subscheme $X''$ of $X$ containing~$\bx$, a closed subscheme $Z''\subsetneq X''$ and an object $\cE''\in\bF(X''_f)$ such that $\cE''|_{X''_f-Z''}=\star$ and $\cE''|_{R_f}=\cE$. In more detail, we have
    \[
        \Spec R_f=\lim\limits_{\longleftarrow}(X_\alpha)_f,
    \]
    where $X_\alpha$ is the inverse system of all affine neighborhoods of $\bx$.  By~(Lim)
    \[
        \bF(R_f)=\lim\limits_{\longrightarrow}\bF((X_\alpha)_f).
    \]
    Using explicit description of limits in the category of pointed sets, we find $\alpha$ and $\cE_\alpha\in\bF((X_\alpha)_f)$ such that $\cE_\alpha|_{R_f}=\cE$.

    Next, we have $\Spec K(X)=\lim\limits_{\longleftarrow}(X'_\beta)$, where $X'_\beta$ is the inverse system of all non-empty affine open subschemes of~$X_\alpha$. Using (Lim) again, we get
    \[
        \bF(K(X))=\lim_{\longrightarrow}\bF((X'_\beta)_f).
    \]
    Since $\cE_\alpha|_{K(X)}=\star$, using again explicit description of limits in the category of pointed sets, we see that $\cE_\alpha|_{(X'_\beta)_f}=\star$ for some $\beta$. Now we can take $X'':=X_\alpha$, $Z'':=(X_\alpha-X'_\beta)_{\mathrm{red}}$, and $\cE'':=\cE_\alpha$.

    Let $S\subset\A^{\dim X-1}_k$ and an $S$-scheme $X'$ be obtained by applying Proposition~\ref{pr:Fibration} to $X''$, $\bx$, $f|_{X''}$, and $Z''$. Set $C:=X'\times_S\Spec R$. Let $\can\colon\Spec R\to X'$ be the canonical morphism and $s:=\can\times\Id$ be the diagonal section.  Let $Y:=\{f|_{X'}=0\}\times_S\Spec R$ and
    \[
        Z:=(\{f|_{X'}=0\}\cup(Z''\cap X'))\times_S\Spec R,
    \]
    let $\phi$ be the composition of the projection $C\to X'$ and the open embedding $X'\hookrightarrow X$. Set $\cE':=(\phi|_{C-Y})^*\cE''$. By construction, if $f=1$, then $Y=\emptyset$. The desired conditions are satisfied by construction and by Proposition~\ref{pr:Fibration}.
\end{proof}

\subsection{Preparation to descent} In this section we improve the data constructed in Proposition~\ref{pr:NiceTriple} so that $\cE'$ can be descended to $\A^1_R$, where $R:={\cO_{X,\bx}}$.

\begin{proposition}\label{pr:NiceTripleImproved} Let $X$ be an integral affine scheme smooth over a field $k$, let $\bx\subset X$ be a non-empty finite set of closed points and set $R:=\cO_{X,\bx}$. Let $\bF$ be a presheaf of pointed sets on $\Sm'/X$ satisfying properties~(Lim) and~(LT) of Section~\ref{sect:GrSerre}. Assume that $f\in k[X]$ is such that $f\ne0$  and the hypersurface $\{f=0\}$ is smooth over $k$. Consider an element
\[
    \cE\in\Ker\Bigl(\bF(R_f)\to\bF(K(X))\Bigr).
\]
Then there are:

\stepzero\noindstep\label{prop:NiceTriple1} a smooth affine $R$-scheme $C$ of pure relative dimension one;

\noindstep a section $s\in C(R)$;

\noindstep $R$-finite closed subschemes $Y\subset Z\subset C$  such that $Y$ is $R$-\'etale and $f|_{s^{-1}(Y)}=0$;

\noindstep\label{prop:NiceTriple4'} an element
\[
    \cE'\in\Ker\Bigl(\bF(C-Y)\to\bF(C-Z)\Bigr)
\]
such that $\left(s|_{R_f}\right)^*\cE'=\cE$, where we view $C-Y$ as an $X$-scheme via the composition of morphisms $C-Y\hookrightarrow C\to\Spec R\to X$.

\noindstep\label{prop:NiceTriple5} An \'etale $R$-morphism $C\to\A^1_R$ such that $Z$ maps isomorphically onto a closed subscheme $Z'\subset\A^1_R$ and
\begin{equation}\label{eq:star}
    Z=Z'\times_{\A^1_R}C.
\end{equation}
Moreover, if $f=1$, then we may arrange the above so that $Y=\emptyset$.
\end{proposition}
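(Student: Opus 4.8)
The plan is to start from the output of Proposition~\ref{pr:NiceTriple} and carry out two improvements. First, I replace the geometric $X$-scheme structure $\phi$ on $C$ by the constant structure $C\to\Spec R\to X$ required in item~\eqref{prop:NiceTriple4'}, transporting $\cE'$ across this change of structure by means of property~(LT). Second, on the resulting curve I produce an \'etale $R$-morphism to $\A^1_R$ under which $Z$ is simultaneously isomorphic to and the full preimage of its image, which is exactly the data needed to later exhibit an elementary distinguished square. Throughout I use freely that $\cE'$ lies in a kernel, so I may shrink $C$ to any affine open neighborhood of $Z\cup s(\Spec R)$ without losing $\cE'$ or the section $s$.

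For the change of structure, write $\phi\colon C\to X$ for the essentially smooth morphism of Proposition~\ref{pr:NiceTriple} and $\phi_0\colon C\to\Spec R\to X$ for the constant one; both are essentially smooth (the latter by Lemma~\ref{lm:EssSm}) and both agree with the canonical morphism $\can$ on the section, since $\phi\circ s=\phi_0\circ s=\can$. Let $W$ be the semilocalization of $C$ at the finitely many closed points of $Z\cup s(\Spec R)$; it is integral, affine and semilocal with closed points finite over $k$, it contains $Z\cup s(\Spec R)$, and on the closed subscheme $U:=s(\Spec R)$ one has $\phi|_U=\phi_0|_U=\can$. Applying property~(LT) with $p_1=\phi|_W$ and $p_2=\phi_0|_W$ yields a finite \'etale $\pi\colon W'\to W$, a section $\Delta\colon U\to W'$, and an isomorphism of semisheaves $(\phi\circ\pi)^*\bF\simeq(\phi_0\circ\pi)^*\bF$ that is the identity on $U$. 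I restrict $\cE'$ to $W-Y$, pull it back along $\pi$, and apply this isomorphism to obtain an element of $\Ker\bigl(\bF(W'-\pi^{-1}(Y))\to\bF(W'-\pi^{-1}(Z))\bigr)$ for the constant structure. By property~(Lim) and standard limit arguments, the cover $\pi$, the section $\Delta$, and this transported element all descend to a finite \'etale cover $V'\to V$ of an affine open $V\subset C$ containing $Z\cup s(\Spec R)$. Renaming $V'$ as $C$, taking $Y,Z$ to be the preimages of the old ones (still $R$-finite, with $Y$ still $R$-\'etale and $Y=\emptyset$ when $f=1$), letting $s$ be the lift of the old section supplied by $\Delta$, and letting $\cE'$ be the transported element establishes items \eqref{prop:NiceTriple1}--\eqref{prop:NiceTriple4'}. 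The relation $(s|_{R_f})^*\cE'=\cE$ follows because the isomorphism is the identity on $U$, as recorded by the commutativity of diagram~\eqref{eq:CD}, while $s^{-1}(Y)$ is unchanged, so $f|_{s^{-1}(Y)}=0$ and $s(\Spec R_f)\subset C-Y$ persist.

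It remains to construct the \'etale coordinate of item~\eqref{prop:NiceTriple5} on this new curve. Because $C$ is smooth of relative dimension one over the semilocal ring $R$ and $Z$ is finite over $R$, I look for $t\in\cO(C)$ whose differential is nowhere zero along $Z$ and whose restriction realizes $Z$ as a closed subscheme $Z'\subset\A^1_R$, i.e.\ with $R[t]\twoheadrightarrow\cO(Z)$. Granting such a $t$, I shrink $C$ to the affine open locus where $\pi_t\colon C\to\A^1_R$ is \'etale; then $Z$ maps isomorphically onto the $R$-finite closed subscheme $Z':=\pi_t(Z)$. Since $\pi_t$ is \'etale and $Z\to Z'$ is an isomorphism, $Z$ is open and closed in $\pi_t^{-1}(Z')$, so $\pi_t^{-1}(Z')=Z\sqcup Z_1$; removing the closed set $Z_1$ --- which can be arranged to miss $s(\Spec R)$ by a further genericity condition on $t$ --- gives an affine curve with $\pi_t^{-1}(Z')=Z$, that is, \eqref{eq:star}. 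This completes item~\eqref{prop:NiceTriple5}.

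The main obstacle is the existence of the coordinate $t$ exhibiting $Z$ both as a closed subscheme of $\A^1_R$ and as a full preimage: over finite residue fields the finite $R$-algebra $\cO(Z)$ need not be monogenic, so a single function separating the points of $Z$ and generating $\cO(Z)$ cannot be produced by a naive primitive-element argument and instead requires the weighted-projective and Poonen-type Bertini techniques already deployed in Propositions~\ref{pr:GoodNeighb} and~\ref{pr:Fibration}. The remaining technical point --- spreading the conclusion of~(LT) from the semilocalization $W$ to an honest affine neighborhood by~(Lim) and checking that the transported $\cE'$ retains the kernel condition and the correct pullback along $s$ --- is comparatively formal, the latter being exactly the content of the commutativity in diagram~\eqref{eq:CD}.
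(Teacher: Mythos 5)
Your treatment of items \eqref{prop:NiceTriple1}--\eqref{prop:NiceTriple4'} is essentially the paper's own argument: apply (LT) to the two essentially smooth structures $\phi$ and $\phi_0$ on a semilocalization $W$ of $C$ at the closed points of $Z$, transport $\cE'$ across the resulting isomorphism of semisheaves, use diagram~\eqref{eq:CD} to keep $(s|_{R_f})^*\cE'=\cE$, and spread everything out from $W$ to an affine neighborhood by (Lim). Two small points you omit: (LT) requires $W$ to be \emph{integral}, which fails if $C$ is disconnected, so one must first replace $C$ (and $Z$, $Y$) by the connected component meeting $s(\Spec R)$ --- connectedness suffices since a connected smooth scheme over the normal domain $R$ is integral; and the paper enlarges $Z$ so that $s$ factors through $Z$, which makes the section survive all later shrinkings for free, whereas you have to police this separately.

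The genuine gap is item \eqref{prop:NiceTriple5}, which you yourself identify as ``the main obstacle'' and then do not prove. Your proposed remedy --- produce the coordinate $t$ by the Bertini--Poonen techniques of Propositions~\ref{pr:GoodNeighb} and~\ref{pr:Fibration} --- cannot work as stated, because the obstruction is not one of genericity: if some $x\in\bx$ has finite residue field and the fiber $Z_x$ contains more points of residue degree $d$ over $k(x)$ than $\A^1_x$ does (nothing constructed so far rules this out), then there exists \emph{no} closed embedding $Z\hookrightarrow\A^1_R$ at all, hence no choice of $t\in\cO(C)$, however generic, satisfies $R[t]\twoheadrightarrow\cO(Z)$. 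One must first \emph{change the data}, not merely choose $t$ cleverly. This is exactly what the paper does: it invokes~\cite{CesnaviciusGrSerre}, Lemma~6.1, to modify the triple so that $\#\{z\in Z_x\colon[k(z):k(x)]=d\}<\#\{z\in\A^1_x\colon[k(z):k(x)]=d\}$ for all $x\in\bx$ and all $d\ge1$, and then Lemma~6.3 of loc.~cit.\ to obtain a flat morphism $C\to\A^1_R$ mapping $Z$ isomorphically onto $Z'$ with $Z=Z'\times_{\A^1_R}C$, which is \'etale near $Z$, so that shrinking $C$ finishes the proof. Even in the case of infinite residue fields, where the counting obstruction disappears, your sketch still owes the monogenicity argument for $\cO(Z)$ over $R$, and the final step has a further flaw: $C-Z_1$ need not be affine, so one must pass to an affine neighborhood of $Z$ inside it, and requiring $Z_1$ (and the discarded locus) to miss $s(\Spec R)$ by ``a further genericity condition'' is again not available over finite fields --- both issues the paper sidesteps by having $s$ factor through $Z$.
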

Note that this time we view $C-Y$ as an $X$-scheme via a morphism different from that of Proposition~\ref{pr:NiceTriple}.
\begin{proof}
  Consider the data provided by Proposition~\ref{pr:NiceTriple}. If $f=1$, then we may assume that $Y=\emptyset$. We will ``improve'' this data to obtain the data required. Set $V:=s(\Spec R)$. Replacing $Z$ with $Z\cup V$, we may assume that $s$ factors through $Z$. Let~$Z_0$ be the connected component of $Z$ containing $V$. Replacing $C$ with its connected component containing $Z_0$, $Z$ with $Z_0$, and $Y$ with $Z_0\cap Y$, we may assume that $C$ is connected.

  Let $\by$ be the set of all closed points of $Z$. Then $\by$ is a non-empty finite set of closed points of $C$. Put $W:=\Spec\cO_{C,\by}$. Then $Z$ is a closed subscheme of $W$. So $Y$ is also a closed subscheme of $W$. Note that~$C$ has two essentially smooth morphisms to $X$: $\phi$ and the composition of the projection to $\Spec R$ with the canonical morphism $\Spec R\to X$; denote the latter morphism by $\phi_2$. Let $p_1,p_2\colon W\to X$ be the compositions of the canonical morphism $W\to C$ with $\phi$ and $\phi_2$ respectively. The condition~\eqref{Cond:d} of Proposition~\ref{pr:NiceTriple} implies that $p_1|_V=p_2|_V$, so we can use property (LT) of $\bF$ to find a finite \'etale $k$-morphism $\pi\colon W'\to W$, a section $\Delta\colon V\to W'$, and an isomorphism $(p_1\circ\pi)^*\bF\simeq(p_2\circ\pi)^*\bF$. Using this isomorphism, we can view
  \[
    \cE'':=(\pi|_{W'-W'\times_WY})^*(\cE'|_{W-Y})
  \]
  as an object of $\bF(W'-W'\times_WY)$, where $W'-W'\times_WY$ is viewed as an $X$-scheme via the composition
  \[
      W'-W'\times_WY\to W'\xrightarrow{\pi}W\to C\to\Spec R\to X.
  \]
  Set $s':=\Delta\circ s\colon\Spec R\to W'$. The condition $\left(s|_{R_f}\right)^*\cE'=\cE$ together with diagram~\eqref{eq:CD} tell us that $\left(s'|_{R_f}\right)^*\cE''=\cE$.

  We can find an affine open subscheme $C_1$ of $C$ containing $Z$ and a finite \'etale morphism $C'_1\to C_1$ such that $W'=W\times_{C_1}C'_1$. We can write $W=\lim\limits_{\longleftarrow}C_\alpha$, where $C_\alpha$ are open affine subschemes of $C_1$ containing $Z$. Then $W'=\lim\limits_{\longleftarrow}C'_\alpha$, where $C'_\alpha:=C_\alpha\times_{C_1}C'_1$.

  Set $Z':=Z\times_{C_1}C'_1$ and $Y':=Y\times_{C_1}C'_1$. Note that $Z'$ and $Y'$ are closed subschemes of each $C'_\alpha$ and that $Z'$ and $Y'$ are finite over $R$. Note also, that if $f=1$, then $Y$ is empty, so $Y'$ is empty as well.

  Using the property~(Lim) of $\bF$, we can find $\alpha$ and an element $\cE_\alpha\in\bF(C'_\alpha-Y')$ such that $\cE_\alpha|_{W'-Y'}=\cE''$ and $\cE_\alpha$ restricts to $\star$ on $C'_\alpha-Z'$ (this is similar to the beginning of the proof of Proposition~\ref{pr:NiceTriple}). Here we view $C'_\alpha-Y'$ as an $X$-scheme via the composition
  \[
    C'_\alpha-Y'\hookrightarrow C'_\alpha\to C_1\hookrightarrow C\to\Spec R\to X.
  \]

  Renaming $C'_\alpha\mapsto C$, $s'\mapsto s$, $Y'\mapsto Y$, $Z'\mapsto Z$, and $\cE_\alpha\mapsto\cE'$, we satisfy properties~\eqref{prop:NiceTriple1}--\eqref{prop:NiceTriple4'} of the proposition.

  To arrange property~\eqref{prop:NiceTriple5}, we argue as in the proof of~\cite[Prop.~6.5]{CesnaviciusGrSerre}.  Let us give more details. By construction, $s$ factors through $Z$. Applying~\cite[Lm.~6.1]{CesnaviciusGrSerre} to the closed subscheme $Z$ of $C$, we see that we can change the data so we can assume that for all $x\in\bx$ we have
  \[
    \#\{z\in Z_x\colon [k(z):k(x)]=d\}<\#\{z\in \A^1_x\colon [k(z):k(x)]=d\}\text{ for every }d\ge1.
  \]
  Now, applying~\cite[Lm.~6.3]{CesnaviciusGrSerre} with $Y=\emptyset$ and replacing $C$ with an appropriate affine Zariski neighborhood of $Z$, we get a flat morphism $C\to\A^1_R$ such that~$Z$ maps isomorphically onto a closed subscheme $Z'\subset\A^1_R$ and such that~\eqref{eq:star} is satisfied. This morphism is \'etale in a neighborhood of $Z$, so, shrinking $C$, we obtain the desired morphism.
\end{proof}

\subsection{Descent to the affine line}\label{sect:DescendA1} Recall that in Definition~\ref{def:ElemDistSqDescend} we defined the notion of a Nisnevich semisheaf of pointed sets.
\begin{proposition}\label{pr:A1Descend}
Let $X$ be an integral affine scheme smooth over a field $k$, let $\bx\subset X$ be a non-empty finite set of closed points and set $R:=\cO_{X,\bx}$. Let $\bF$ be a Nisnevich semisheaf of pointed sets on $\Sm'/X$ satisfying properties~(Lim) and~(LT) of Section~\ref{sect:GrSerre} and let $f\in k[X]$ be such that $f\ne0$ and the hypersurface $\{f=0\}$ is smooth over $k$. Consider an element
\[
    \cE\in\Ker\Bigl(\bF(R_f)\to\bF(K(X))\Bigr).
\]
Then there are:

\noindstep a section $s\in\A^1_R(R)$;

\noindstep $R$-finite closed subschemes $Y\subset Z\subset \A^1_R$ such that $Y$ is $R$-\'etale and $f|_{s^{-1}(Y)}=0$;

\noindstep an element
\[
    \cE'\in\Ker\Bigl(\bF(\A^1_R-Y)\to\bF(\A^1_R-Z)\Bigr)
\]
such that $\left(s|_{R_f}\right)^*\cE'=\cE$.

Moreover, if $f=1$, then we may arrange the above so that $Y=\emptyset$.
\end{proposition}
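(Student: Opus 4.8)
The plan is to reduce immediately to Proposition~\ref{pr:NiceTripleImproved} and then to descend the resulting class from the curve $C$ down to $\A^1_R$ along the \'etale $R$-morphism furnished there, using the defining property of a Nisnevich semisheaf.

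First I would invoke Proposition~\ref{pr:NiceTripleImproved} for the given $\cE$, obtaining a smooth affine $R$-curve $C$, a section $s\in C(R)$, $R$-finite closed subschemes $Y\subset Z\subset C$ with $Y$ being $R$-\'etale and $f|_{s^{-1}(Y)}=0$, a class $\cE'\in\Ker(\bF(C-Y)\to\bF(C-Z))$ with $(s|_{R_f})^*\cE'=\cE$, together with an \'etale $R$-morphism $p\colon C\to\A^1_R$ that restricts to an isomorphism $Z\xrightarrow{\;\simeq\;}Z'$ onto a closed subscheme $Z'\subset\A^1_R$ and satisfies $Z=Z'\times_{\A^1_R}C$. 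I then set $Y':=p(Y)\subseteq Z'$, the image of $Y$ under the isomorphism $p|_Z$, a closed subscheme of $\A^1_R$. Since $p|_Z$ is an isomorphism of $R$-schemes, $Y\simeq Y'$ and $Z\simeq Z'$ over $R$, so $Y'\subset Z'$ are $R$-finite closed subschemes of $\A^1_R$ with $Y'$ being $R$-\'etale; and if $f=1$, then $Y=\emptyset$ in Proposition~\ref{pr:NiceTripleImproved}, whence $Y'=\emptyset$. Two identities will be used repeatedly: from $Z=Z'\times_{\A^1_R}C$ together with $Y'\subseteq Z'$ and the fact that $p|_Z$ is an isomorphism, one gets $p^{-1}(Z')=Z$ and $p^{-1}(Y')=Y$ as closed subschemes of $C$.

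The heart of the argument is that
\[
\begin{CD}
C-Z @>>> C-Y\\
@VVV @VV p V\\
\A^1_R-Z' @>>> \A^1_R-Y'
\end{CD}
\]
is an elementary distinguished square in the sense of Definition~\ref{def:ElemDistSq}: the right vertical map is \'etale, the bottom map is an open embedding because $Y'\subseteq Z'$, the square is Cartesian since $(C-Y)\times_{\A^1_R-Y'}(\A^1_R-Z')=(C-Y)-p^{-1}(Z')=C-Z$, and over the reduced complement $Z'-Y'$ the map $p$ restricts to the isomorphism $Z-Y\xrightarrow{\;\simeq\;}Z'-Y'$ induced by $p|_Z$. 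The Nisnevich semisheaf property (Definition~\ref{def:ElemDistSqDescend}) then makes the map $\bF(\A^1_R-Y')\to\bF(\A^1_R-Z')\times_{\bF(C-Z)}\bF(C-Y)$ surjective. The pair $(\star,\cE')$ lies in the target fiber product, since the restriction of $\star$ to $\bF(C-Z)$ is $\star$ while $\cE'|_{C-Z}=\star$ by hypothesis; lifting it produces $\widetilde\cE\in\bF(\A^1_R-Y')$ with $\widetilde\cE|_{\A^1_R-Z'}=\star$ and $p^*\widetilde\cE=\cE'$.

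Finally I would assemble the section. Put $\bar s:=p\circ s\in\A^1_R(R)$. Using $p^{-1}(Y')=Y$ one finds $\bar s^{-1}(Y')=s^{-1}(p^{-1}(Y'))=s^{-1}(Y)$, so $f|_{\bar s^{-1}(Y')}=0$ and $\bar s|_{R_f}$ factors through $\A^1_R-Y'$; moreover $(\bar s|_{R_f})^*\widetilde\cE=(s|_{R_f})^*p^*\widetilde\cE=(s|_{R_f})^*\cE'=\cE$. Renaming $\widetilde\cE\mapsto\cE'$, $\bar s\mapsto s$, $Y'\mapsto Y$, $Z'\mapsto Z$ yields all the asserted data. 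Since essentially all of the geometric work is already contained in Proposition~\ref{pr:NiceTripleImproved}, the present argument is mostly bookkeeping; the step demanding the most care is the verification that the displayed square is genuinely an elementary distinguished square---here the equality $Z=Z'\times_{\A^1_R}C$ is precisely what forces both Cartesianness after deleting $Y'$ and the isomorphism over $Z'-Y'$---together with the check that $(\star,\cE')$ really lies in the fiber product so that the semisheaf axiom applies.
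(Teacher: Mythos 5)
Your proposal is correct and follows essentially the same route as the paper: invoke Proposition~\ref{pr:NiceTripleImproved}, transport $Y$ to $Y'\subset Z'$ via the isomorphism $p|_Z$, descend $\cE'$ along the elementary distinguished square formed by $C-Z$, $C-Y$, $\A^1_R-Z'$, $\A^1_R-Y'$ using the Nisnevich semisheaf axiom, and take the section $p\circ s$. The only difference is that you spell out the verifications (Cartesianness, the isomorphism over $Z'-Y'$ via $p^{-1}(Y')=Y$, membership of $(\star,\cE')$ in the fiber product, and the pullback identities for the composed section) that the paper leaves implicit.
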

\begin{proof}
Consider the data $Y\subset Z\subset C\to\A_R^1$, $s$, $\cE'$, and $Z'$ provided by Proposition~\ref{pr:NiceTripleImproved}. If $f=1$, we may assume that $Y=\emptyset$. Since the morphism $C\to\A^1_R$ maps $Z$ isomorphically onto $Z'$, it maps $Y$ isomorphically onto a closed subscheme $Y'\subset Z'$. The square
\[
\begin{CD}
C-Z @>>> C-Y\\
@VVV @VVV \\
\A^1_R-Z' @>>>\A^1_R-Y'
\end{CD}
\]
is an elementary distinguished square as in Definition~\ref{def:ElemDistSq}. Since $\bF$ is a Nisnevich semisheaf, by Definition~\ref{def:ElemDistSqDescend},~$\cE'$ can be descended to an element $\cE''\in\bF(\A^1_R-Y')$ whose restriction to $\A^1_R-Z'$ is $\star$. Let $s'$ be the composition of $s$ and the morphism $C\to\A^1_R$.

It remains to rename $s'\mapsto s$, $Y'\mapsto Y$, $Z'\mapsto Z$, $\cE''\mapsto\cE'$.
\end{proof}

\subsection{Proof of Theorem~\ref{th:GrSerreFun}}\label{sect:EndPrfGrSerreFun}
We are now ready to finish the proof of Theorem~\ref{th:GrSerreFun}. We may assume that $\by$ is non-empty as otherwise $\cO_{X,\by}=K(X)$ and the statement is trivial. Let $X$, $\by$, and $\bF$ be as in the statement of the theorem. Consider
\[
    \cE\in\Ker\Bigl(\bF(\cO_{X,\by})\to\bF(K(X))\Bigr).
\]
Using the property (Lim) of $\bF$, we find an open affine neighborhood~$X'$ of $\by$ in $X$ and $\cE'\in\bF(X')$ such that $\cE'|_{\cO_{X,\by}}=\cE$ (cf.~beginning of the proof of Proposition~\ref{pr:NiceTriple}). For any $y\in\by$ choose a closed point $x\in X'$ such that $x$ is in the closure of $y$. Let $\bx$ be the set of these closed points and set $R:=\cO_{X,\bx}$. It is enough to show that $\cE'':=\cE'|_R=\star$ (because $\cE''|_{\cO_{X,\by}}=\cE$). Note that $\cE''|_{K(X)}=\cE|_{K(X)}=\star$.

Applying Proposition~\ref{pr:A1Descend} with $f=1$ to $\cE''$, we find

\stepzero\noindstep a section $s\in\A^1_{R}(R)$;

\noindstep an $R$-finite closed subscheme $Z\subset \A^1_R$;

\noindstep an element
\[
    \cE'''\in\Ker\Bigl(\bF(\A^1_R)\to\bF(\A^1_R-Z)\Bigr)
\]
such that $s^*\cE'''=\cE''$. It remains to use property~(Sec) of~$\bF$.
\qed

\subsection{Open subschemes of affine lines} The following proposition is crucial for the proof of Theorem~\ref{th:NisnevichFun}.
\begin{proposition}\label{pr:A1}
Let $X$ be a smooth integral affine $k$-scheme, where $k$ is an infinite field. Let $\bF$ be a Nisnevich semisheaf of pointed sets on $\Sm'/X$ satisfying properties (Lim), (LT), (SecF) of Section~\ref{sect:GrSerre} and property (A1F) of Section~\ref{sect:NisConjSemisheaves}. Let $\bx$ be a non-empty finite set of closed points of $X$ and put $R:=\cO_{X,\bx}$. Let $Y\subset Z\subset\A^1_R$ be closed subschemes finite over $R$ such that $Y$ is \'etale over $R$. Then the map $\bF(\A^1_R-Y)\to\bF(\A^1_R-Z)$ has a trivial kernel.
\end{proposition}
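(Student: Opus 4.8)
The plan is to reduce triviality of $\cE$ over $R=\cO_{X,\bx}$ to triviality over the fraction field $K=K(X)$, where the affine-line input (A1F) is available, and then to propagate back to $R$ by a family Grothendieck--Serre statement. I first dispose of the \emph{generic fibre}. Restricting $\cE$ to the generic point of $\Spec R$ gives $\cE_K\in\bF(\A^1_K-Y_K)$ with $\cE_K|_{\A^1_K-Z_K}=\star$. Over the field $K$ the finite sets $Y_K$ and $Z_K-Y_K$ are disjoint, so around each closed point of $Y_K$ there is a Zariski neighbourhood meeting $Z_K$ only in that point, on whose puncture $\cE_K$ is trivial. Using the Zariski gluing valid for any Nisnevich semisheaf (Remark~\ref{rem:Zariski}(i)) I glue $\cE_K$ with the trivial class across the points of $Y_K$ one at a time; this extends $\cE_K$ to a class $\widetilde\cE_K\in\bF(\A^1_K)$ that is still $\star$ on the dense open $\A^1_K-Z_K$. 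Since $\widetilde\cE_K$ is then trivial at the generic point of $\A^1_K$, property (A1F) gives $\widetilde\cE_K=\star$, whence $\cE_K=\star$ in $\bF(\A^1_K-Y_K)$.

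It remains to show that the restriction $\bF(\A^1_R-Y)\to\bF(\A^1_K-Y_K)$ to the generic fibre has trivial kernel; combined with the above this yields $\cE=\star$. Spreading $Y$ out to a closed subscheme $\mathcal{Y}\subset\A^1_{X'}$ finite \'etale over an open $X'\ni\bx$, I consider the presheaf
\[
    \bF''(W):=\bF(\A^1_W-\mathcal{Y}_W),\qquad W\in\Ob(\Sm'/X').
\]
Because forming $\A^1_{(-)}-\mathcal{Y}$ commutes with base change and carries open embeddings and elementary distinguished squares to the same, $\bF''$ is again a Nisnevich semisheaf inheriting (Lim). My goal is to verify that $\bF''$ satisfies the hypotheses of Theorem~\ref{th:GrSerreFun}; that theorem would then give exactly that $\bF''(R)\to\bF''(K)$ has trivial kernel. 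When $\mathcal{Y}$ happens to be \emph{constant}, i.e. pulled back from a finite \'etale $Y_0\subset\A^1_k$, this is immediate and requires no new functor: then $\A^1_R-Y=\Spec\bigl(R\otimes_k k[\A^1_k-Y_0]\bigr)$, and Corollary~\ref{cor:Family} applied with the affine $k$-scheme $\A^1_k-Y_0$ gives the trivial-kernel statement directly.

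The main obstacle is the general, non-constant, case. Unwinding the section property for $\bF''$ one finds $\bF''(\A^1_U)=\bF\bigl((\A^1_U-\mathcal{Y}_U)\times_U\A^1\bigr)$: the relevant scheme is the product over $U$ of the \emph{relative} curve $\A^1_U-\mathcal{Y}_U$ with an affine line. This is the shape that the family version (SecF) --- rather than plain (Sec) --- is meant to absorb, the curve direction serving as the auxiliary factor; the trouble is that, $\mathcal{Y}$ being genuinely non-constant, this product is over $U$ and not over $k$, so (SecF) of $\bF$ does not apply verbatim. I expect to resolve this by invoking the rigidity property (LT) of $\bF$, exactly as in the nice-triple constructions of Section~\ref{sect:NiceTriples}: after a finite \'etale cover of the semilocal base one straightens $\mathcal{Y}$ to a constant \'etale divisor, reducing the section property of $\bF''$ --- and likewise its property (LT) --- to the honest product situation governed by Corollary~\ref{cor:Family}. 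This straightening, which lets one transport classes without appealing to \'etale descent (unavailable for a mere semisheaf), is the delicate point and the only place, besides (A1F), where the full strength of the hypotheses on $\bF$ enters.
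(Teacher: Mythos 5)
Your generic-fibre step and your constant case are correct and coincide with Case~1 of the paper's proof: Corollary~\ref{cor:Family} reduces the constant situation $Y=Y_0\times_k\Spec R$ to the fraction field, and there one glues $\cE_K$ with $\star$ over the Zariski cover $\{\A^1_K-Y_K,\ \A^1_K-(Z_K-Y_K)\}$ and invokes (A1F). The gap is the general case, which you leave as an ``expectation,'' and the mechanism you propose for it does not work. Concretely: (a) the auxiliary presheaf $\bF''(W)=\bF(\A^1_W-\mathcal{Y}_W)$ does \emph{not} inherit (LT). Property (LT) compares the values of $\bF$ on one and the same scheme $T$ equipped with two different $X$-structures $p_1\circ\pi\circ\psi$ and $p_2\circ\pi\circ\psi$; but for $\bF''$ the divisor $\mathcal{Y}$ is pulled back along the structure morphism, so the two structures produce two genuinely different schemes $\A^1_T-\mathcal{Y}_1$ and $\A^1_T-\mathcal{Y}_2$, and the hypotheses supply no identification between them. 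This is exactly why Corollary~\ref{cor:Family} requires the auxiliary factor to be a \emph{constant} product over $k$, as you yourself observe when diagnosing the failure of (SecF). (b) Your fix --- straighten $\mathcal{Y}$ after a finite \'etale cover of the semilocal base --- has no way back down: $\bF$ is only a Nisnevich semisheaf, so triviality of a class after a finite \'etale base change does not descend, and (LT) is a rigidity statement about two $X$-structures on a fixed scheme, not a descent device along finite \'etale covers. So the argument does not close.

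The paper supplies precisely the two ingredients your sketch is missing, and neither is a base-change straightening. First (Case~2), when $Y$ is a disjoint union of \emph{sections} of $\A^1_R\to\Spec R$, the straightening happens in the fibre direction over the same base: using distinct points $t_1,\dots,t_d\in k$ (here $k$ infinite is used), one builds a closed embedding of the first infinitesimal neighbourhood of $Z$ that shifts the sections onto the constant points $t_i$, extends it to an $R$-morphism $\A^1_R\to\A^1_R$, and checks it is \'etale near $Z$; after shrinking (Lemma~\ref{lm:EtaleSec}) one gets an \'etale $R$-morphism $\phi\colon W\to\A^1_R$ with $\phi^{-1}(\phi(Z))\cap W=Z$, $Z\xrightarrow{\sim}Z'$, and $\phi(Y)=Y_0\times_k\Spec R$ constant. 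This creates an \emph{elementary distinguished square}, so the Nisnevich semisheaf property descends $\cE|_{W-Y}$ to the constant situation of Case~1; Zariski gluing of $\cE$ with $\star$ over $\A^1_R=(\A^1_R-Y)\cup W$ then finishes. Second (Case~3), the reduction from general finite \'etale $Y$ to split $Y$ is an induction on $d(Y/\Spec R)=\deg_RY-\#\pi_0(Y)$: base change to a component $V$ of $Y$ of degree $\ge 2$; in $Y\times_RV$ the diagonal $\Delta$ splits off as a separate component, so the invariant drops and the inductive hypothesis (applied over $V$, realized as a semilocal ring of an \'etale $X'\to X$) kills $\cE|_{\A^1_V-Y\times_RV}$; then a second elementary distinguished square, built from $\Delta\subset\A^1_V$ lying over $V\subset\A^1_R$, lets one glue $\cE$ with $\star$ to \emph{extend} it to a class on $\A^1_R-(Y-V)$, which dies by induction since $d((Y-V)/\Spec R)<d(Y/\Spec R)$. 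All transfers between the cover and the base use only Nisnevich/Zariski gluing of the semisheaf --- never \'etale descent --- and this induction together with the fibre-direction straightening is the actual content of the proposition beyond your Case~1.
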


We will use the following simple lemma.
\begin{lemma}\label{lm:EtaleSec}
Let $\phi\colon\widetilde T\to T$ be an \'etale separated morphism of Noetherian schemes. Let $\Delta\colon T\to\widetilde T$ be a section of $\phi$. Then $\widetilde T=\Delta(T)\sqcup T'$, where $T'$ is a closed subscheme of $\widetilde T$ disjoint from $\Delta(T)$.
\end{lemma}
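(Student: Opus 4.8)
The plan is to show that the section $\Delta$ is \emph{simultaneously} an open immersion and a closed immersion. Granting this, $\Delta(T)$ is an open and closed subscheme of $\widetilde T$, so its complement $T':=\widetilde T\setminus\Delta(T)$ is again open and closed; equipping it with this open subscheme structure yields the decomposition $\widetilde T=\Delta(T)\sqcup T'$ into disjoint open-and-closed pieces, with $T'$ in particular a closed subscheme disjoint from $\Delta(T)$, as required. Thus everything reduces to proving the two immersion properties of $\Delta$.

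The key observation, which I would carry out first, is that any section is a base change of the diagonal of $\phi$. Writing $\Delta_\phi\colon\widetilde T\to\widetilde T\times_T\widetilde T$ for the diagonal, I would verify that the square
\[
\begin{CD}
T @>\Delta>> \widetilde T\\
@V\Delta VV @VV\Delta_\phi V\\
\widetilde T @>(\Id,\,\Delta\circ\phi)>> \widetilde T\times_T\widetilde T
\end{CD}
\]
is Cartesian. Commutativity is immediate from $\phi\circ\Delta=\Id_T$, since both composites send $t\in T$ to $(\Delta(t),\Delta(t))$. For the universal property one checks on points that the fiber product of $\Delta_\phi$ and $(\Id,\Delta\circ\phi)$ consists of those $a\in\widetilde T$ with $a=\Delta(\phi(a))$, i.e.\ exactly the image $\Delta(T)$, and that $\Delta\colon T\to\Delta(T)$ is an isomorphism because $\Delta$ is a monomorphism with retraction $\phi$. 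Hence $\Delta$ is obtained from $\Delta_\phi$ by base change along $(\Id,\Delta\circ\phi)$.

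With the square in hand, I would invoke the two standard characterizations of the diagonal together with stability under base change. Since $\phi$ is separated, $\Delta_\phi$ is a closed immersion, so $\Delta$ is a closed immersion. Since $\phi$ is \'etale, it is in particular unramified, and an unramified morphism is precisely one whose diagonal $\Delta_\phi$ is an open immersion; therefore $\Delta$ is also an open immersion. Both conclusions use only that open (resp.\ closed) immersions are preserved by base change. The only point that genuinely requires care is this open-immersion half: the closed-immersion statement (a section of a separated morphism is a closed immersion) is entirely routine, whereas the crux is the less-often-cited fact that unramifiedness is equivalent to the diagonal being open, which is what upgrades $\Delta$ from a closed immersion to an open-and-closed one and makes the clean disjoint decomposition possible.
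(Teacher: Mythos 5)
Your proof is correct, but it reaches the crucial ``open'' half of the statement by a different route than the paper. The paper's own proof is three sentences: separatedness of $\phi$ makes the section $\Delta$ a closed embedding, and then it cites EGA~IV, Cor.~17.3.5 to conclude that $\Delta$, being a section of an \'etale morphism, is itself \'etale, hence an open map; clopenness of $\Delta(T)$ follows. You instead realize $\Delta$ as a base change of the diagonal $\Delta_\phi$ via the Cartesian (``magic'') square and then import the two standard properties of the diagonal: it is a closed immersion because $\phi$ is separated, and an open immersion because $\phi$ is unramified. The net effect is the same, but your argument buys two small things: it never uses flatness, so the lemma holds verbatim for any separated \emph{unramified} (not necessarily \'etale) morphism locally of finite type, and it exhibits $\Delta$ directly as an open immersion rather than merely an open map whose image happens to be closed. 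The paper's version is shorter, given the EGA reference, and stays closer to how the lemma is actually used. One point to tighten in your write-up: the verification that the square is Cartesian should be phrased via the functor of points ($S$-valued points for an arbitrary test scheme $S$), not via underlying topological points; your computation goes through verbatim in that setting, with the fiber product identified with $T$ by $c\mapsto\Delta\circ c$ and inverse $a\mapsto\phi\circ a$, using $\phi\circ\Delta=\Id_T$.
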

\begin{proof}
Since $\phi$ is separated, $\Delta$ is a closed embedding. On the other hand, by~\cite[Cor.~17.3.5]{EGAIV.4} the morphism $\Delta\colon T\to\widetilde T$ is \'etale. Thus, it is open. Hence, $\Delta(T)$ is both a closed and an open subscheme of $\widetilde T$.
\end{proof}

\begin{proof}[Proof of Proposition~\ref{pr:A1}]
Let
\[
    \cE\in\Ker\Bigl(\bF(\A^1_R-Y)\to\bF(\A^1_R-Z)\Bigr).
\]
We need to show that $\cE=\star$.

\emph{Case 1. Assume that $Y=Y_0\times_k\Spec R$, where $Y_0$ is a closed subscheme of $\A^1_k$.} Since
\[
    \A^1_R-Y=(\A^1_k-Y_0)\times_k\Spec R,
\]
and the scheme $\A^1_k-Y_0$ is affine and $k$-smooth, we can apply Corollary~\ref{cor:Family}. Thus we only need to check that $\cE_K:=\cE|_{\A^1_K-Y_K}=\star$, where $K$ is the fraction field of $R$. Note that $Y_K$ and $Z_K$ are finite subschemes of $\A^1_K$. Let $T_K$ denote the set theoretic difference $Z_K-Y_K$. Recall from Remark~\ref{rem:Zariski}(i) that $\bF$ satisfies gluing in the Zariski topology. Since $\cE_K|_{\A^1_K-Z_K}=\star$, we can glue $\cE_K$ with $\star\in\bF(\A^1_K-T_K)$ to obtain an element $\cE'\in\bF(\A^1_K)$. It follows from $(A1F)$ that $\cE'=\star$. Since $\cE'|_{\A^1_K-Y_K}=\cE_K$, we are done.

\emph{Case 2.  Assume that we have an integer $d\in\Z_{\ge0}$ such that $Y=\bigsqcup_{i=1}^dY_i$, where for each $i$ the restriction of the projection $\A^1_R\to\Spec R$ to $Y_i$ is an isomorphism.}

Enlarging $Z$, we may assume that
\begin{equation}\label{eq:01inZ}
  (0\times\Spec R)\sqcup(1\times\Spec R)\subset Z.
\end{equation}
We need a lemma.
\begin{lemma}
     There is a Zariski neighborhood $W$ of $Z$ in $\A^1_R$ and an \'etale $R$-morphism $\phi\colon W\to\A^1_R$ such that $\phi$ maps $Z$ isomorphically onto a closed subscheme $Z'\subset\A^1_R$ such that $\phi^{-1}(Z')=Z$, and $\phi(Y)$ is of the form $Y_0\times_k\Spec R$.
\end{lemma}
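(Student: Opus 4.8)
The plan is to construct $\phi$ as an explicit polynomial map $t\mapsto\phi(t)$ with coefficients in $R$ that ``straightens'' the sections $Y_i$ to constant sections while being étale along $Z$ and a closed immersion on $Z$. By the hypothesis of Case~2 each $Y_i$ is the image of a section of $\A^1_R\to\Spec R$, so $Y_i=V(t-a_i)$ for some $a_i\in R$; disjointness of the $Y_i$ forces $a_i-a_j\in R^\times$ for $i\ne j$. Since $k$ is infinite, I would choose pairwise distinct constants $c_1,\dots,c_d\in k$ and form the Lagrange interpolation $\phi_0(t)=\sum_i c_i\prod_{j\ne i}\frac{t-a_j}{a_i-a_j}\in R[t]$ (the denominators being units), so that $\phi_0(a_i)=c_i$. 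Setting $q(t)=\prod_{i=1}^d(t-a_i)$ and searching for $\phi$ of the form $\phi(t)=\phi_0(t)+q(t)g(t)$ with $g\in R[t]$ to be chosen, the value condition $\phi(a_i)=c_i$ then holds for \emph{every} $g$, so that $\phi$ contracts each $Y_i$ to the constant section $t=c_i$.

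It then remains to choose $g$ so that $\phi$ is étale along $Z$ (i.e.\ $\phi'(\theta)\in\cO(Z)^\times$, where $\theta:=t|_Z$) and $\phi|_Z$ is a closed immersion (i.e.\ $\phi(\theta)$ generates $\cO(Z)$ over $R$). Because $Z$ is finite over the semilocal ring $R$, both conditions may be checked modulo each maximal ideal $\fm_x$ of $R$ (by Nakayama for the generation statement, and because units of the semilocal ring $\cO(Z)$ are detected on residue fields). Over the fibre $Z_x\subset\A^1_{k(x)}$ these become the requirements that $\bar\phi'$ not vanish on $Z_x$ and that $\bar\phi(\bar\theta)$ be a primitive element of the finite $k(x)$-algebra $\cO(Z_x)$. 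At the points of $Y_x$ the reduction $\bar q$ has a simple zero (here one uses $a_i-a_j\in R^\times$ and that $Y_x$ is reduced as $Y$ is $R$-étale), so there $\bar\phi$ takes the fixed distinct values $c_i$ while $\bar\phi'$ is governed by the invertible quantity $\bar q'(a_i)$ times $\bar g(a_i)$; at the remaining points of $Z_x$ one has $\bar q\ne0$, so both the values and derivatives of $\bar\phi$ are freely movable through $\bar g$ and $\bar g'$. Taking $\deg g$ large enough that the evaluation-and-derivative map on $g$ is surjective, each of the finitely many (non-vanishing, point-separation, unramifiedness) conditions becomes a proper closed condition, hence avoidable over the infinite field $k(x)$. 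Since $R$ surjects onto $\prod_x k(x)$, I can then realize a simultaneously generic residue by an actual $g\in R[t]$.

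Finally I would let $W_0\subset\A^1_R$ be the open locus $\{\phi'\ne0\}$, where $\phi$ is étale; it contains $Z$, and $\phi|_Z$ is a closed immersion onto $Z':=\phi(Z)$. The base change $\phi^{-1}(Z')\to Z'$ is étale and separated, and $(\phi|_Z)^{-1}$ is a section of it, so Lemma~\ref{lm:EtaleSec} gives $\phi^{-1}(Z')=Z\sqcup Z''$ with $Z''$ closed and disjoint from $Z$. Setting $W:=W_0\setminus Z''$ yields a Zariski neighborhood of $Z$ with $\phi^{-1}(Z')=Z$ on which $\phi|_Z$ is still an isomorphism onto $Z'$. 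Since $\phi$ sends each $V(t-a_i)$ to $V(t-c_i)$ with the $c_i$ distinct, one obtains $\phi(Y)=\{c_1,\dots,c_d\}\times_k\Spec R=Y_0\times_k\Spec R$, where $Y_0\subset\A^1_k$ is the reduced set of the $c_i$, as required. (The normalization \eqref{eq:01inZ} is not needed for this construction.)

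The main obstacle I expect is the middle step: arranging, by a single $g\in R[t]$, that $\phi$ be simultaneously étale along all of $Z$ (not merely along $Y$) and restrict to a closed immersion on $Z$. This is precisely where the infinitude of $k$—and hence of the residue fields $k(x)$—is used, together with the comaximality of the maximal ideals of the semilocal ring $R$.
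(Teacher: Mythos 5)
Your construction is correct in its overall architecture and reaches the stated conclusion, but it is a genuinely different route from the paper's. The paper does not perturb a polynomial generically: it builds, fiber by fiber over each $x\in\bx$, a closed immersion of the \emph{first infinitesimal neighborhood} $Z_{(2)}$ of $Z$ into $\A^1_R$, acting as the identity on the components of $(Z_{(2)})_x$ with non-rational support and as a translation on the rational ones, the translations being organized by an arbitrary \emph{set-theoretic} bijection $\alpha\colon k(x)\to k(x)$ sending $(Y_i)_x$ to prescribed constants; this makes injectivity on points, unramifiedness and the closed-immersion property automatic (derivatives are identically $1$), with no genericity or field theory needed. It then lifts this function to $R[t]$ via the Chinese Remainder Theorem and Nakayama, and obtains \'etaleness near $Z$ through quasi-finiteness (this is exactly where the normalization~\eqref{eq:01inZ} enters) plus miracle flatness. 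Your Lagrange-interpolation-plus-generic-perturbation $\phi=\phi_0+qg$ replaces all of this by an avoidance argument over each residue field $k(x)$; in exchange you get to drop~\eqref{eq:01inZ} and the flatness detour entirely, since you check \'etaleness by the derivative criterion for morphisms of smooth $R$-schemes. Both proofs use semilocality/CRT, Nakayama, and conclude identically with Lemma~\ref{lm:EtaleSec}. What the paper's trick buys is that it sidesteps precisely the delicate points of your middle step: separating conjugate values, the primitive-element condition, and derivative control at non-rational points.

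One intermediate claim of yours is false as stated and should be repaired, although the repair is easy. The ``evaluation-and-derivative map'' $g\mapsto\bigl((g(z))_z,(g'(z))_z\bigr)$ is \emph{not} surjective when some point $z\in Z_x$ has residue extension $k(z)/k(x)$ inseparable (which can happen, since $k$ infinite need not be perfect): e.g.\ for $p_z=t^p-u$ one checks that $g'(z)$ is confined to the proper subspace spanned by $1,z,\dots,z^{p-2}$. Your argument survives because it never needs this surjectivity, only that each bad locus is a proper closed subset of the coefficient space: for the derivative condition, the affine-linear functional $\bar g\mapsto\bar\phi_0'(z)+(\bar q\bar g)'(z)$ is non-constant (if $(\bar q\bar g)'(z)=0$ for all $\bar g$, testing $\bar g=1$ and $\bar g=t$ gives $\bar q'(z)=\bar q(z)=0$, contradicting $\bar q(z)\ne0$); for the value conditions (point separation, and primitivity of $\bar\phi(z)$ in $k(z)$) only value-surjectivity of evaluation is used, together with the fact that $k(z)/k(x)$ is a simple extension and hence has finitely many intermediate fields, so non-generators form a finite union of proper subspaces. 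With these substitutions your proof is complete.
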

\begin{proof}
Let $t_1,\ldots,t_d$ be distinct elements of $k$ (we are using that $k$ is infinite here). Note that for any extension $k'$ of $k$, the elements $t_i$ gives rise to a $k'$-point of $\A^1_{k'}$. We abuse notation denoting this point by $t_i$.

We claim that there is a closed $R$-embedding $\iota\colon Z_{(2)}\hookrightarrow\A^1_R$, where $Z_{(2)}$ is the first infinitesimal neighborhood of $Z$ in $\A^1_R$, such that for all $i$ we have $\iota(Y_i)=t_i\times_k\Spec R$. Indeed, take a point $x\in\bx$ and consider the finite scheme $(Z_{(2)})_x\subset\A^1_x$. We can write
\[
    (Z_{(2)})_x=\bigsqcup_{i=1}^nZ_{x,i},
\]
where the underlying set of $Z_{x,i}$ consists of a single point $z_{x,i}$. By reordering the schemes $Z_{x,i}$, we may assume that $(Y_i)_x=z_{x,i}$ for $i=1,\ldots,d$. Consider a bijection $\alpha\colon k(x)\to k(x)$ such that for $i=1,\ldots,d$ we have $\alpha((Y_i)_x)=t_i$ (here $k(x)$ is the residue field of $x\in\Spec R$).

We define the morphism $\iota_x\colon(Z_{(2)})_x\to\A^1_x$ as follows. If $z_{x,i}$ is not $k(x)$-rational, then the restriction of~$\iota_x$ to $Z_{x,i}$ is the original embedding. Otherwise, it is the composition of this embedding with the shift by $-z_{x,i}+\alpha(z_{x,i})$. Note that the restriction of $\iota_x$ to any $Z_{x,i}$ is a closed embedding. Since $\iota_x$ is injective on the closed points of $(Z_{(2)})_x$, it is a closed embedding. By construction $\iota_x((Y_i)_x)=t_i$. An $R$-morphism from any $R$-scheme to $\A^1_R$ is given by a global function. Thus, by a version of the Chinese Remainder Theorem, we can extend $\coprod_{x\in\bx}\iota_x$ to an $R$-morphism $\iota$ such that for all $i$ we have $\iota(Y_i)=t_i\times_k\Spec R$. Now it follows from Nakayama's Lemma that $\iota$ is a closed embedding.

Next, we may extend $\iota$ to $\phi\colon\A^1_R\to\A^1_R$. By~\eqref{eq:01inZ}, this morphism is non-constant on the closed fibers $\A^1_x$ for $x\in\bx$, so, by semicontinuity of fiber dimensions (see~\cite[Thm.~13.1.3]{EGAIV-3}), it is quasi-finite. Since $\A^1_R$ is regular, $\phi$ is flat (see~\cite[Thm.~23.1]{MatsumuraCommRingTh}). Since $\phi|_{Z_{(2)}}$ is a closed embedding, $\phi$ is \'etale along $Z_{(2)}$. Thus it is \'etale on a neighborhood $W'$ of $Z$. Next, $\phi$ induces an isomorphism $Z\to\phi(Z)$, so it has a section over $\phi(Z)$. Since $\phi|_{W'}$ is \'etale, by Lemma~\ref{lm:EtaleSec}, we get $\phi^{-1}(\phi(Z))\cap W'=Z\sqcup Z_1$ for a closed subscheme $Z_1$ of $W'$, and we take $W:=W'-Z_1$. Finally, we have
\[
    \phi(Y)=\{t_1,\ldots,t_d\}\times_k\Spec R.
\]
The proof is complete.
\end{proof}
We return to the proof of Case~2 of Proposition~\ref{pr:A1}. The square
\[
\begin{CD}
W-Z @>>> W-Y\\
@VVV @VVV \\
\A^1_R-Z' @>>>\A^1_R-Y_0\times_k\Spec R
\end{CD}
\]
is an elementary distinguished square. Since $\bF$ is a Nisnevich semisheaf, $\cE|_{W-Y}$ can be descended to an element $\cE'\in\bF(\A^1_R-Y_0\times_k\Spec R)$ whose restriction to $\A^1_R-Z'$ is $\star$. By the previous case, we have $\cE'=\star$, so $\cE|_{W-Y}=\star$. Consider the Zariski cover
\[
    \A^1_R=(\A^1_R-Y)\cup W.
\]
We can glue $\cE$ with $\star$ to obtain an element $\cE''\in\bF(\A^1_R)$ such that $\cE''|_{\A^1_R-Z}=\star$. Using the previous case with $Y=\emptyset$, we see that $\cE''=\star$. Thus, $\cE=\cE''|_{\A^1_R-Y}=\star$.

\emph{Case 3. General case.} For an $R$-quasi-finite scheme $T$, let $\deg_RT$ denote the length of the generic fiber of $T\to\Spec R$, and let $d(T/\Spec R)$ be the difference between $\deg_RT$ and the number of connected components of $T$. In other words, for a connected and quasi-finite over $R$ scheme $T$ we set $d(T/\Spec R)=\deg_RT-1$, and then we extend this definition to disconnected schemes by additivity.

We induct on $d(Y/\Spec R)$. If $d(Y/\Spec R)=0$, then all connected components of $Y$ are of degree one over $\Spec R$, so we are in the previous case. Assume that the statement is proved for smaller values of $d(Y/\Spec R)$. Let $V$ be a connected component of $Y$ of degree at least two over $R$. Set $Y':=Y\times_R V$, $Z':=Z\times_R V$, $\cE':=\cE|_{\A^1_V-Y'}$. Let $\Delta\subset V\times_R V\subset Y'$ be the diagonal component. Since $V\times_RV$ decomposes into the union of at least two connected components (one being $\Delta$), we have $d(Y'/V)<d(Y/\Spec R)$. Since $\cE'|_{\A^1_V-Z'}=\star$, by the induction hypothesis we have $\cE'=\star$. In more detail, we can find an \'etale morphism $\pi\colon X'\to X$ such that $V\simeq\cO_{X',\pi^{-1}(\bx)}$. Then we can apply the induction hypothesis to $R':=\cO_{X',\pi^{-1}(\bx)}$, $Y'$, and $Z'$.

The \'etale morphism $\phi\colon\A^1_V\to\A^1_R$ maps $\Delta$ isomorphically onto $V$. Let $W$ be a Zariski neighborhood of $\Delta$ in $\A^1_V$ such that $\phi^{-1}(V)\cap W=\Delta$ (existing by Lemma~\ref{lm:EtaleSec}).
Then the square
\[
\begin{CD}
W-Y' @>>> W-(Y'-\Delta)\\
@VVV @VVV \\
\A^1_R-Y @>>>\A^1_R-(Y-V)
\end{CD}
\]
is an elementary distinguished square. Note that
\[
    \cE|_{W-Y'}=\cE'|_{W-Y'}=\star|_{W-Y'}=\star.
\]
Thus, since $\bF$ is a Nisnevich semisheaf, we can glue $\cE$ with $\star\in\bF(W-(Y'-\Delta))$ to obtain an element $\cE'''\in\bF(\A^1_R-(Y-V))$ whose restriction to $\A^1_R-Y$ is $\cE$. Since $\deg_RV\ge2$, we have $d((Y-V)/\Spec R)<d(Y/\Spec R)$, so we can apply the induction hypothesis to conclude that $\cE'''=\star$. Thus $\cE=\star$.
\end{proof}

\subsection{Proof of Theorem~\ref{th:NisnevichFun}}
We are now ready to finish the proof of Theorem~\ref{th:NisnevichFun}. Consider
\[
    \cE\in\Ker\Bigl(\bF((\cO_{X,\by})_f)\to\bF(K(X))\Bigr).
\]
We may assume that $\by$ is non-empty as otherwise $(\cO_{X,\by})_f=K(X)$ and the statement is trivial. Using property (Lim) of $\bF$, we find an open affine neighborhood $X'$ of $\by$ in $X$ and $\cE'\in\bF(X'_f)$ such that $\cE'|_{(\cO_{X,\by})_f}=\cE$ (this is similar to the beginning of the proof of Proposition~\ref{pr:NiceTriple}).

For any $y\in\by$ choose a closed point $x\in X'$ such that $x$ is in the closure of $y$. Let~$\bx$ be the set of these closed points. Set $R:=\cO_{X,\bx}$. It is enough to show that $\cE'':=\cE'|_{R_f}=\star$ (because $\cE''|_{(\cO_{X,\by})_f}=\cE$). Note that $\cE''|_{K(X)}=\star$.

Applying Proposition~\ref{pr:A1Descend} we find

\stepzero\noindstep a section $s\in\A^1_R(R)$;

\noindstep $R$-finite closed subschemes $Y\subset Z\subset \A^1_R$ such that $Y$ is $R$-\'etale and $f|_{s^{-1}(Y)}=0$;

\noindstep an element
\[
    \cE'''\in\Ker\Bigl(\bF(\A^1_R-Y)\to\bF(\A^1_R-Z)\Bigr)
\]
such that $(s|_{R_f})^*\cE'''=\cE''$. It remains to use Proposition~\ref{pr:A1}.
\qed

\section{Applications to torsors}
For completeness we start with deriving the (already known) conjecture of Grothendieck and Serre for torsors over regular semilocal integral domains containing fields from Theorem~\ref{th:GrSerreFun}. Then we derive Theorem~\ref{th:Nisnevich} from Theorem~\ref{th:NisnevichFun}.
\subsection{Re-proving the Grothendieck--Serre conjecture} Let $R$ be a regular semilocal integral domain containing a field $k$ and let $\bG$ be a reductive group scheme over $R$. We want to show that a generically trivial $\bG$-torsor $\cE$ over $R$ is trivial. We may assume that $k$ is the prime field of $R$ so, in particular, that $k$ is perfect. Now, by Popescu's Theorem (see~\cite{Popescu},\cite{SwanOnPopescu},\cite{SpivakovskyPopescu}), we can write $R=\lim\limits_{\longrightarrow} R_\alpha$, where each $R_\alpha$ is smooth (and, in particular, of finite type) over~$k$. Changing the direct system $R_\alpha$, we may assume that the rings $R_\alpha$ are integral. For some $\alpha$ we can find a reductive group scheme $\bG_\alpha$ over $R_\alpha$ such that $\bG_\alpha|_R\simeq\bG$ and a $\bG_\alpha$-torsor $\cE_\alpha$ over $R_\alpha$ such that $\cE_\alpha|_R\simeq\cE$ and $\cE_\alpha|_{K(R_\alpha)}$ is trivial.

Set $X:=\Spec R_\alpha$ and let $\by\subset X$ be the image of the set of all closed points of $\Spec R$ in $X$. Then $\Spec R\to X$ factors through $\Spec\cO_{X,\by}$. Set $\cE':=\cE_\alpha|_{\cO_{X,\by}}$. Since $\cE'|_R=\cE$, it is enough to show that~$\cE'$ is trivial.

Consider the functor $\bF$ on $\Sm'/X$ given by $\bF(Y):=H^1(Y,\bG_\alpha)$. It is enough to show that the map $\bF(\cO_{X,\by})\to\bF(K(X))$ has a trivial kernel. This will follow from Theorem~\ref{th:GrSerreFun} as soon as we verify the hypothesis.

Note that torsors satisfy the \'etale descent. It follows that $\bF$ is a Nisnevich semisheaf (cf.~Definition~\ref{def:ElemDistSqDescend} and the first part of the proof of~\cite[Ch.~3, Prop.~1.4]{MorelVoevodsky}). In more detail, consider a distinguished square~\eqref{eq:ElemDistSq} with $W=U\times_XV$ and $\bG$-torsors $\cE_U$ and $\cE_V$ on $U$ and $V$ respectively whose restrictions to $W$ are isomorphic. Choose an isomorphism $\phi\colon\cE_U|_W\to\cE_V|_W$. Note that $U\sqcup V$ is an \'etale cover of $Y$. To descend $\cE_U\sqcup\cE_V$ to $X$, we need to construct an isomorphism between the two pullbacks of $\cE_U\sqcup\cE_V$ to
\[
    (U\sqcup V)\times_Y(U\sqcup V)=(U\times_YU)\sqcup(U\times_YV)\sqcup(V\times_YU)\sqcup(V\times_YV)
\]
satisfying the cocycle condition. The isomorphism on $U\times_YU=U$ is the identity. The isomorphisms on $U\times_YV$ and on $V\times_YU$ are given by $\phi$. To construct an isomorphism on $V\times_YV$, we consider the Zariski covering
\[
    V\times_YV=\Delta(V)\sqcup(U\times_YV\times_YV),
\]
where $\Delta\colon V\to V\times_UV$ is the diagonal embedding. The two compositions
\[
    V\xrightarrow{\Delta} V\times_YV\to (U\sqcup V)\times_Y(U\sqcup V)\rightrightarrows U\sqcup V
\]
are equal so the two pullbacks of $\cE_U\sqcup\cE_V$ to $\Delta(V)$ are canonically isomorphic. Next, the two morphisms from
$U\times_YV\times_YV$ to $U\sqcup V$ can be factored through
\[
    U\times_YV\times_YV=(U\times_YV)\times_U(U\times_YV)\rightrightarrows U\times_YV,
\]
so the two pullbacks of $\cE_V$ are identified with the help of $\phi$ with the two pullback of $\cE_U$ via
\[
    (U\times_YV)\times_U(U\times_YV)\rightrightarrows U\times_YV\to U,
\]
and again they are canonically isomorphic. It is easy to see that the isomorphisms agree on the intersection $\Delta(V)\cap(U\times_YV\times_YV)$. One checks that the isomorphisms satisfy the cocycle condition. We see that $\bF$ is a Nisnevich semisheaf.

It is clear that~$\bF$ satisfies property~(Lim). Property~(LT) follows from~\cite[Thm.~3.1]{PaninNiceTriples}. Property~(Sec) follows from~\cite[Thm.~4]{FedorovGrSerreNonSC}.

\subsection{Proof of Theorem~\ref{th:Nisnevich}}\label{sect:ProofNis}
We may assume that $f\ne0$ as otherwise $R_f=0$ and the statement is trivial. Let $\fm_1$, \ldots, $\fm_n$ be all the maximal ideals of $R$. Let $k'$ be the prime field of~$R$. Since $k'$ is perfect, by Popescu's Theorem (see~\cite{Popescu},\cite{SwanOnPopescu},\cite{SpivakovskyPopescu}) we can write $R=\lim\limits_{\longrightarrow} R_\alpha$, where the rings $R_\alpha$ are smooth over $k'$. We may assume that these rings are integral. Denote the canonical morphism $R_\alpha\to R$ by $\psi_\alpha$.

Then for some $\alpha$ we can find an $R_\alpha$-group scheme $\bG_\alpha$, an element $f_\alpha\in R_\alpha$, and a $\bG_\alpha$-torsor $\cE_\alpha$ over $(R_\alpha)_{f_\alpha}$ such that $(\bG_\alpha)^{ad}$ is strongly locally isotropic, $\bG_\alpha|_R=\bG$, $\psi_\alpha(f_\alpha)=f$, and $\cE_\alpha|_{R_f}\simeq\cE$.

By assumption, for all $i$ we have $f\notin\fm_i^2$. Thus, for all $i$ we have $f_\alpha\notin\psi_\alpha^{-1}(\fm_i)^2$. Since $k'$ is perfect, this implies that $\Spec(R_\alpha/f_\alpha R_\alpha)$ is $k'$-smooth in a neighborhood of $\bigcup_{i=1}^n\psi_\alpha^{-1}(\fm_i)$. That is, there is $g\in R_\alpha$ such that for all $i$ we have $g\notin\fm_i$ and $R'/f'R'$ is $k'$-smooth, where $R'=(R_\alpha)_g$ and $f'$ is the image of $f_\alpha$ in $R'$. Since $R$ is semilocal, $\psi_\alpha$ factors as $R_\alpha\xrightarrow{\psi'}R'\to R$. Note that $R'$ is smooth and of finite type over $k'$.

The morphism $\psi'$ decomposes as
\[
    R'\to R'\otimes_{k'}k\xrightarrow{\phi}R.
\]
The induced morphism $\Spec R\to\Spec(R'\otimes_{k'}k)$ factors as
\[
    \Spec R\xrightarrow{\phi^*}X\to\Spec(R'\otimes_{k'}k),
\]
where $X$ is a connected component of $\Spec(R'\otimes_{k'}k)$. Set $\bG':=\bG_\alpha|_X$. Then $(\bG')^{ad}$ is strongly locally isotropic.

Consider the functor $\bF$ on $\Sm'/X$ given by $\bF(Y):=H^1(Y,\bG')$. We have explained above that $\bF$ is a Nisnevich semisheaf satisfying properties~(Lim) and~(LT) of Section~\ref{sect:GrSerre}. Let us verify that it satisfies properties  (SecF), and (A1F). Note that $X$ is normal, so $\bG$ can be embedded into $\GL_{n,X}$ for some $n$, see~\cite[Cor.~3.2]{ThomasonResolution}. Now property~(SecF) follows from~\cite[Thm.~5]{FedorovGrSerreNonSC}. Finally, (A1F) follows from~\cite[Cor.~3.10(a)]{GilleTorseurs}.

Set $\by:=\phi^*(\{\fm_1,\ldots,\fm_n\})\subset X$.

Set $f_X:=(f'\otimes_{k'}1)|_X\in k[X]$. Then $\{f_X=0\}$ is the base change of $\Spec(R'/f'R')$ via $\Spec k\to\Spec k'$. Thus, this hypersurface is smooth. Let $\cE_X$ be the restriction of $\cE_\alpha$ to $(\cO_{X,\by})_{f_X}$. It follows from Theorem~\ref{th:NisnevichFun} that $\cE_X$ is trivial. However, $\cE_X|_{R_f}\simeq\cE$ and we are done. \qed

\subsection{Proof of Theorem~\ref{th:NisnevichAbelian}}\label{sect:ProofAbelian} We may assume that $f\ne0$ as otherwise $R_f=0$ and the statement is trivial. It is enough to show that the homomorphism $H^n(R_f,\bG)\to H^n(K,\bG)$ has trivial kernel. Let $k$ be a field of characteristic zero contained in $R$. Since $k$ is perfect, we can apply Popescu's Theorem to write $R=\lim\limits_{\longrightarrow} R_\alpha$, where each $R_\alpha$ is smooth (and, in particular, of finite type) over~$k$. Thus, using~\cite[Exp.~VII, Cor.~5.9]{SGA4-2} we reduce to the case when $R$ is a semi-local ring of finitely many points on a smooth $k$-scheme $X$ and $\bG$ is a restriction of a group scheme $\bG'$ of multiplicative type defined over $X$. Consider the functor $\bF$ on $\Sm'/X$ given by the \'etale cohomology group $\bF(Y):=H^n(Y,\bG'|_Y)$.

It is enough to show that $\bF$ satisfies the conditions of Theorem~\ref{th:NisnevichFun}. Let us show first that $\bF$ is a Nisnevich semisheaf. Consider an elementary distinguished square~\eqref{eq:ElemDistSq} and set $Z:=(Y-U)_{\mathrm{red}}$, $Z':=(V-W)_{\mathrm{red}}$. By~\cite[Ch.~III, Prop.~1.27]{MilneEtale} the natural morphisms $H_Z^p(Y,\bG')\to H_{Z'}^p(V,\bG'|_V)$ are isomorphisms for all $p\ge0$. Now the statement follows via a diagram chase from the exact sequence~\cite[Ch.~III, Prop.~1.25]{MilneEtale}.

Property~(Lim) follows from~\cite[Exp.~VII, Cor.~5.9]{SGA4-2}. Property~(LT) follows from~\cite[Thm.~3.1]{PaninNiceTriples}. Properties~(Sec) and~(A1F) follow from the homotopy invariance of \'etale cohomology, which is shown in the proof of~\cite[Thm.~4.2]{ColliotTeleneOjanguren}. \qed

\section{Proof of Theorem~\ref{th:CounterExample}}\label{sect:CounterEx}
Theorem~\ref{th:CounterExample} will be derived from the following proposition.

\begin{proposition}\label{pr:CounterExample}
  Let $B$ be an integral Noetherian normal local ring that is not a field. Assume that $B$ contains an infinite field. Let $\bG$ be a nontrivial semisimple $B$-group scheme anisotropic over the fraction field of $B$. Then there is a maximal ideal $\fm\subset B[t]$ and a generically trivial $\bG$-torsor $\cE$ over $\A^1_{B[t]_\fm}$ that cannot be extended to $\P^1_{B[t]_\fm}$.
\end{proposition}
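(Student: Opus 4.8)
The plan is to compactify the affine line in the infinity direction and to reduce non-extendability to a formal-local statement along the infinity section. Write $R:=B[t]_\fm$ and $K:=\mathrm{Frac}(B)$, so that $\mathrm{Frac}(R)=K(t)$. First I would record that $\bG$ remains anisotropic over $K(t)$: a proper parabolic over $K(t)$ is a $K(t)$-point of the projective $K$-scheme of parabolics of its type, which by properness extends over the local ring of the rational point $t=0$ and specializes to a proper parabolic over $K$, contradicting anisotropy over $K$. Let $v$ be the coordinate on $\A^1_R$, let $w=1/v$ be the coordinate at the infinity section $\infty_R\subset\P^1_R$, and write $D:=\Spec R[[w]]$ and $D^\ast:=\Spec R((w))$. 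By a Beauville--Laszlo gluing argument, a $\bG$-torsor $\cE$ on $\A^1_R$ extends to $\P^1_R$ if and only if its restriction $\cE|_{D^\ast}$ extends to a $\bG$-torsor over $D$ (the nontrivial direction glues $\cE$ with such an extension along $D^\ast$). Thus it suffices to produce $\fm$ and a generically trivial $\cE$ on $\A^1_R$ whose germ $\cE|_{D^\ast}$, a generically trivial $\bG$-torsor over $R((w))$, does not extend over $R[[w]]$.

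The affine Grassmannian enters through the contrast between the behaviour of $\bG$ at the generic and special points of $\Spec B$. Since $\bG$ is semisimple, its affine Grassmannian $\Gr_{\bG}$ over $\Spec B$ (formed with respect to $w$) is ind-projective and commutes with base change. Over the generic point the group is anisotropic, so by Bruhat--Tits theory the building is a point and $\bG(K((w)))=\bG(K[[w]])$; hence the generic fibre of $\Gr_{\bG}$ is the single base point. Over special points, however, $\bG$ acquires isotropy (in the intended application $\bG$ is split over the closed fibre), and the corresponding fibres of $\Gr_{\bG}$ are positive-dimensional. The morphism $\Gr_{\bG}\to\Spec B$ is therefore proper and an isomorphism over the generic point but has nontrivial special fibres, and it is precisely this collapse that I would exploit: the family of $\bG$-torsors on $\A^1_R$ is strictly larger than the family of those extending to $\P^1_R$.

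Using this, I would construct $\cE$ as a family over $\Spec R$ whose germ at infinity is nontrivial. The key point is that a modification supported at a single finite point of $\A^1$ is trivial away from that point and therefore always extends across infinity, so the nontriviality of $\cE|_{D^\ast}$ must be genuinely relative rather than concentrated on a horizontal divisor. I would build it from a non-base point of a positive-dimensional special fibre of $\Gr_{\bG}$, spread out over the $t$-line, and then choose the maximal ideal $\fm\subset B[t]$ in sufficiently general position---here the hypothesis that $k$ is infinite is used---so that the resulting class of $\cE|_{D^\ast}$ in $H^1(R((w)),\bG)$ is nontrivial and not merely the image of the base point.

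The main obstacle is the proof of non-extension, that is, that $\cE|_{D^\ast}$ admits no extension over $R[[w]]$. The mechanism I expect to use is that anisotropy over $K(t)$ forces the generic fibre of $\Gr_{\bG}$ over $\Spec R$ to be a point, so any hypothetical extension $\cF$ over $R[[w]]$ is, after restriction to $K(t)$, generically trivial and hence trivial over $K(t)[[w]]$; propagating this triviality from the generic point of $\Spec R$ back to the maximal ideal, by means of the properness of $\Gr_{\bG}$ over $B$ and the $w$-adic completeness of $R[[w]]$, should contradict the non-triviality arranged in the previous step. The delicate part is that $B$ is only assumed normal, so Grothendieck--Serre is unavailable over $R$ and one cannot simply conclude triviality of a generically trivial torsor; the argument must instead extract non-extendability directly from the anisotropy-driven collapse of the affine Grassmannian together with properness and the genericity of $\fm$. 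Verifying that this collapse genuinely obstructs every possible lattice---and does so even when $\bG$ is simply connected, so that no abelian characteristic class is available as a shortcut---is the heart of the proof.
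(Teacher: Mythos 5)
Your proposal correctly identifies the two ingredients that also drive the paper's proof --- the affine Grassmannian as the moduli of extensions across a divisor, and the rigidity coming from anisotropy --- and your Beauville--Laszlo reformulation (extendability to $\P^1_R$ is equivalent to extendability of the germ over $R((w))$ to $R[[w]]$) is sound, as is your specialization argument that $\bG$ stays anisotropic over $K(t)$. But there are two genuine gaps. First, the torsor is never actually constructed: ``build it from a non-base point of a positive-dimensional special fibre of $\Gr_{\bG}$, spread out over the $t$-line, and choose $\fm$ in sufficiently general position'' specifies no class in $H^1(R((w)),\bG)$ and no mechanism by which the choice of $\fm$ guarantees anything. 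Second, and decisively, the non-extension argument --- which you yourself call ``the heart of the proof'' --- is left at ``should contradict the non-triviality arranged in the previous step.'' The mechanism you sketch rests on the claim that $\Gr_{\bG}\to\Spec B$ is ``an isomorphism over the generic point,'' and this is false: the generic fibre $\Gr_{\bG_K}$ is a full ind-projective ind-scheme (the group splits over $\bar K$, where $\Gr$ has many points); anisotropy only forces uniqueness of \emph{rational} sections over fields where the group remains anisotropic. So there is no collapse of the ind-scheme to push against, and the proposed propagation of triviality from $K(t)[[w]]$ back to $R[[w]]$ is exactly a Grothendieck--Serre-type statement over the non-regular ring $R[[w]]$ --- unavailable, as you acknowledge, since $B$ is only normal. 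As written, the outline cannot be completed.

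The paper's proof is structured precisely to avoid this trap: the loop direction of the Grassmannian is taken \emph{transverse} to the extension direction. The torsor is built on $\A^1_k\times\P^1_B$, modified along the horizontal divisor $\A^1_k\times\Spec B'$ (with $B'$ a finite \'etale splitting cover of $B$ embedded in $\A^1_B$), so that by Lemma~\ref{lm:Grassmannians} such torsors with trivialization off the divisor correspond to morphisms $\A^1_{B'}\to\Gr_G$ with $G$ split. The extension problem of the statement then becomes extending such a morphism to $\P^1_{B'}$, which fails for completely elementary reasons: one composes a closed embedding $\P^1_k\hookrightarrow\Gr_G$ with the explicit non-extendable map $(tx+1:1)$ of Lemma~\ref{lm:NonExt}. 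Anisotropy is used only over function fields --- where unique-section rigidity does hold --- to prove the local-to-global and uniqueness principle (Proposition~\ref{pr:ExtAniso}) converting ``the torsor extends over $\A^1_k\times\Spec\cO_{\P^1_B,x}$ for every $x$'' into a global extension, hence a contradiction. Some such transversal setup, or a genuine substitute for the missing regularity of $B$, is needed to turn your strategy into a proof.
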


\begin{remark}
  It is likely that the assumption that $B$ contains a field is not necessary. It is probably also enough to assume that $\bG$ is reductive. However, we want to use the results of~\cite{FedorovExotic}, where these assumptions were made.
\end{remark}

\begin{proof}[Derivation of Theorem~\ref{th:CounterExample} from Proposition~\ref{pr:CounterExample}]
  In~\cite[Example~2.4]{FedorovExotic} for any algebraically closed field $k$ we constructed a connected smooth variety $X$ over $k$ and a $\Spin_7$-torsor $\cF$ over $X$ such that~$\cF$ cannot be reduced to a proper parabolic subgroup of $\Spin_7$ over the generic point of $X$.

  Recall that a semisimple group scheme $\bG$ over a semilocal scheme $S$ is anisotropic, if and only if it contains no proper parabolic subgroup schemes (see~\cite[Exp.~XXVI, Cor.~6.14]{SGA3-3}). Let $\bG:=\Aut(\cF)$ be the group scheme of automorphisms of $\cF$ as a $\Spin_7$-torsor. Then $\bG$ is a simple simply-connected group scheme (a strongly inner form of $\Spin_7$) such that its generic fiber is anisotropic by~\cite[Prop.~3.2]{FedorovExotic}. Since $k$ is algebraically closed, $X$ is not a point. Let $y\in X$ be a closed point. Then by Proposition~\ref{pr:CounterExample} applied to $B:=\cO_{X,y}$ there is a generically trivial $\bG$-torsor over $\A^1_R$, where $R$ is the local ring of a point on $\A^1_B$, that cannot be extended to $\P^1_R$.
\end{proof}

Recall that in~\cite[Sect.~5]{FedorovExotic} for a connected affine scheme $U$ and a semisimple $U$-group scheme~$\bG$ we defined the affine Grassmannian $\Gr_\bG$. This is an ind-projective ind-scheme over $U$. We will use these results here.

\subsection{Proof of Proposition~\ref{pr:CounterExample}}
The following proposition is crucial for the proof. The proposition asserts that, in the anisotropic situation, the possibility of extending a torsor over $\A^1_k\times X$ to $\P^1_k\times X$ can be verified locally over $X$.
\begin{proposition}\label{pr:ExtAniso}
\stepzero\noindstep\label{Ext1}
Let $X$ be an integral Noetherian normal $k$-scheme. Assume that $\bH\to X$ is a semisimple group scheme such that $\bH_{k(X)}$ is anisotropic. Let $\cE$ be a generically trivial $\bH$-torsor over $\A^1_k\times X$. Then $\cE$ can be extended to $\P^1_k\times X$ if and only if for all schematic points $x\in X$ the restriction of $\cE$ to $\A^1_k\times\Spec\cO_{X,x}$ can be extended to $\P^1_k\times\Spec\cO_{X,x}$.

\noindstep\label{Ext2}
The extension from part~\eqref{Ext1} is unique (if it exists) in the following sense. Let $\cE_1$ and $\cE_2$ be $\bH$-torsors over $\P^1_k\times X$ and let $\tau_i\colon\cE\to\cE_i|_{\A^1_k\times X}$ be isomorphisms. Then there is a unique isomorphism of $\bH$-torsors $\phi\colon\cE_1\to\cE_2$ such that $\phi|_{\A^1_k\times X}=\tau_2\circ\tau_1^{-1}$.
\end{proposition}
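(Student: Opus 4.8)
The plan is to treat the two parts in reverse order, deducing the local--global statement~\eqref{Ext1} from the uniqueness statement~\eqref{Ext2}. For~\eqref{Ext2}, the uniqueness of $\phi$ is formal: the isomorphism scheme $\Iso(\cE_1,\cE_2)$ is a torsor under the affine group $\Aut(\cE_2)$, hence affine and in particular separated over the integral (so reduced) scheme $\P^1_k\times X$, and two isomorphisms agreeing on the dense open $\A^1_k\times X$ must coincide. All the content is therefore in the \emph{existence} of $\phi$, that is, in extending $\tau_2\circ\tau_1^{-1}\colon\cE_1|_{\A^1_k\times X}\to\cE_2|_{\A^1_k\times X}$ across the divisor $\infty\times X$.

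First I would settle this over the generic fibre, i.e.\ over $F:=k(X)$. Write $s=1/t$ for the local coordinate at $\infty$, and let $D=\Spec F[[s]]$, $D^*=\Spec F((s))$ be the formal and punctured formal disks at $\infty\in\P^1_F$. Since $\cE$ is generically trivial and $F(t)=F(s)\subset F((s))$, the torsor $\cE|_{D^*}$ is trivial, so by Beauville--Laszlo gluing the extensions of $\cE_F$ across $\infty$ (with their gluing data) are classified by the $F$-points of the untwisted affine Grassmannian $\Gr_{\bH}=\bH(F((s)))/\bH(F[[s]])$ of~\cite{FedorovExotic}. Here the anisotropy hypothesis enters decisively: because $\bH_F$ is anisotropic the constant group $\bH_{F((s))}$ is again anisotropic, so $\bH(F((s)))$ is bounded and coincides with the maximal parahoric $\bH(F[[s]])$; consequently $\Gr_\bH(F)=\{\star\}$. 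Thus the extension of $\cE_F$ over $\P^1_F$, together with its gluing datum, is \emph{unique}, which is exactly the assertion that $\tau_{2}\circ\tau_{1}^{-1}$ extends to an isomorphism $\phi_\eta\colon\cE_1|_{\P^1_F}\to\cE_2|_{\P^1_F}$ over the generic fibre.

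To globalise $\phi_\eta$ I would use normality. The isomorphisms $\tau_2\circ\tau_1^{-1}$ (over $\A^1_k\times X$) and $\phi_\eta$ (over $\P^1_F$) agree on the overlap $\A^1_F$, hence glue to a section of the affine $\P^1_k\times X$-scheme $\Iso(\cE_1,\cE_2)$ over $\P^1_k\times X\setminus Y$, where $Y\subset\infty\times X$ is closed of codimension $\ge 2$ in $\P^1_k\times X$. As $X$ is normal, so is $\P^1_k\times X$; being normal Noetherian it satisfies Serre's condition $(S_2)$, so a section of an affine morphism defined away from a closed set of codimension $\ge 2$ extends uniquely over it. This produces $\phi$ and completes~\eqref{Ext2}. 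For~\eqref{Ext1} the ``only if'' direction is immediate by restriction. For ``if'', the hypothesis at the generic point $\eta$ (where $\cO_{X,\eta}=F$) furnishes an extension over $\P^1_F$, which I would spread out to an extension of $\cE$ over $\P^1_k\times U$ for some dense open $U\subset X$; for each $x\in X$ the hypothesis, spread out, gives an extension over $\P^1_k\times V_x$ for an open $V_x\ni x$, and $\{U\}\cup\{V_x\}$ covers $X$. On each overlap, which is integral, normal, and has function field $F$, part~\eqref{Ext2} supplies a unique isomorphism of the two extensions restricting to the identity on $\A^1$; uniqueness forces the cocycle condition, so these torsors glue by (fppf) descent to a $\bH$-torsor on $\P^1_k\times X$ restricting to $\cE$.

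The main obstacle is the generic rigidity $\Gr_\bH(F)=\{\star\}$. This is precisely where anisotropy is indispensable: for split or merely isotropic $\bH$ the distinct Birkhoff--Grothendieck cocharacters yield non-isomorphic extensions of the same torsor on $\A^1_F$, so both uniqueness and the whole scheme collapse. Establishing that a generically trivial torsor on $\A^1_F$ admits a unique extension across $\infty$ when $\bH_F$ is anisotropic is exactly the input provided by the affine Grassmannian technology of~\cite{FedorovExotic}; granting it, the remaining steps are routine spreading-out, the $(S_2)$/Hartogs extension, and descent.
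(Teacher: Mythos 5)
Your proposal is correct and takes essentially the same route as the paper: both prove part~(ii) first by reducing to the generic fibre, where anisotropy forces the section of $\Gr_{\bH_{k(X)}}$ classifying extensions across $\infty$ to be unique (the input from~\cite[Sect.~5--6]{FedorovExotic}), then extend the isomorphism $\tau_2\circ\tau_1^{-1}$ across the remaining codimension~$\ge2$ locus by normality/Hartogs with uniqueness from separatedness of the affine schemes $\cE_i$, and finally deduce part~(i) by spreading out the local extensions to Zariski neighborhoods and gluing them via the canonical isomorphisms and cocycle condition supplied by part~(ii). The only cosmetic differences are your Beauville--Laszlo/boundedness gloss on why $\Gr_{\bH}(k(X))$ is a single point, where the paper simply cites loc.~cit., and your invocation of fppf descent where Zariski gluing suffices.
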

\begin{proof}
\eqref{Ext2} Since $\cE|_{\A^1_{k(X)}}$ is generically trivial, it is trivial by~\cite[Cor.~3.10(a)]{GilleTorseurs}; we choose a trivialization. Thus the extensions of $\cE|_{\A^1_{k(X)}}$ to $\P^1_{k(X)}$ are parameterized by sections of $\Gr_{\bH_{k(X)}}$ (see~\cite[Prop.~5.1]{FedorovExotic}). Since $\bH_{k(X)}$ is anisotropic, there is a unique such section (see~\cite[Sect.~6]{FedorovExotic}) and thus a unique extension.

Hence, the isomorphism $\tau_2\circ\tau_1^{-1}$ can be extended to the generic point of the infinity divisor $\infty\times X\subset\P^1\times X$. Since $\cE_i$ are affine over $X$ and $X$ is normal, by Hartogs' Theorem $\tau_2\circ\tau_1^{-1}$ can be extended to an isomorphism $\phi\colon\cE_1\to\cE_2$. Uniqueness of this extension follows because $\cE_i$ are affine and thus separated over $X$.

\eqref{Ext1} The condition is obviously necessary. Conversely, assume that it is satisfied and choose a trivialization of $\cE$ on $\A^1_k\times\Spec k(X)$ as above. Consider a point $x\in X$. Using our assumption, we can find a Zariski neighbourhood $U_x$ of $x$ and an extension of $\cE|_{\A^1_k\times U_x}$ to $\cE|_{\P^1_k\times U_x}$. By part~\eqref{Ext2}, for any points $x,y\in X$ the corresponding extensions are isomorphic on the intersection $\P^1_k\times(U_x\cap U_y)$ (in the sense made precise above). Note that these isomorphisms agree on triple intersections, by the uniqueness part of~\eqref{Ext2}. Thus the extensions glue together.
\end{proof}

We need a lemma.
\begin{lemma}\label{lm:NonExt}
  Let $R$ be a ring of positive Krull dimension. Then there is a morphism $\A_R^1\to\P^1_\Z$ that cannot be extended to $\P_R^1$.
\end{lemma}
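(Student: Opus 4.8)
The plan is to exhibit a single explicit morphism built from two coprime linear polynomials and to pinpoint one point of the infinity section at which it cannot be defined. Since $\dim R>0$, I would first choose a chain of primes $\mathfrak p\subsetneq\mathfrak q$ in $R$ together with an element $a\in\mathfrak q\setminus\mathfrak p$. Writing $t$ for the coordinate on $\A^1_R=\Spec R[t]$, I take the two sections $at,\,at+1\in R[t]$ of the trivial line bundle; since $(at+1)-(at)=1$ they generate the unit ideal, so they define a morphism $\phi\colon\A^1_R\to\P^1_\Z$, given on the two standard charts of $\P^1_\Z$ by $x_1/x_0\mapsto(at+1)/(at)$ and $x_0/x_1\mapsto at/(at+1)$. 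The claim is that $\phi$ does not extend to $\P^1_R$. To prove this I would reduce to a local domain: if $\phi$ extended to some $\tilde\phi\colon\P^1_R\to\P^1_\Z$, then base change along $R\to A:=(R/\mathfrak p)_{\mathfrak q}$ (localization at the image of $\mathfrak q$) produces an extension $\tilde\phi_A$ of the induced morphism $\phi_A$ over the local domain $A$, whose maximal ideal I denote $\mathfrak m_A$. Here the image of $a$, still called $a$, is a nonzero non-unit of $A$ ($a\neq0$ because $a\notin\mathfrak p$, and $a\in\mathfrak m_A$ because $a\in\mathfrak q$). Thus it suffices to rule out an extension over $A$.

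Working near infinity with coordinate $u=1/t$, let $P\in\P^1_A$ be the closed point cut out by $\mathfrak m_A$ and $u$, so that $\cO_P=A[u]_{(\mathfrak m_A,u)}$ is local with fraction field $F(u)$, where $F$ is the fraction field of $A$. Because $\cO_P$ is local and $\P^1_\Z$ is covered by its two standard affine charts, any morphism defined at $P$ must factor through one of these charts; hence the existence of $\tilde\phi_A$ at $P$ forces one of the two images under $\phi_A$ of the affine coordinates, namely
\[
    z:=\frac{a+u}{a}\quad\text{or}\quad w:=\frac{a}{a+u},
\]
to lie in $\cO_P\subseteq F(u)$.

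The heart of the argument, and the only nonformal step, is to show that neither $z$ nor $w$ belongs to $\cO_P$. For $z$ I would use the relation $u=a(z-1)$: if $z\in\cO_P$ then $u\in\mathfrak m_A\cO_P$, whereas $\cO_P/\mathfrak m_A\cO_P\cong\kappa[u]_{(u)}$ with $\kappa=A/\mathfrak m_A$, in which $u$ is a uniformizer and so nonzero, a contradiction. For $w$ I would use the $A$-algebra isomorphism $\cO_P/(a+u)\cO_P\cong A$ induced by $u\mapsto-a$; if $w\in\cO_P$ then $a\in(a+u)\cO_P$, which means $a=0$ in $A$, again a contradiction. These two incompatibilities show that $\tilde\phi_A$, and hence $\tilde\phi$, cannot exist. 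The main obstacle is precisely this local computation: at $P$ both homogeneous coordinates of $\phi$ lie in $\mathfrak m_P$ after the poles at infinity are cleared, and one must verify that no common factor can be cancelled to remedy this. This is where positive Krull dimension of $R$ is essential, since the point $P$ exists only when $\mathfrak m_A\neq0$; for a field $R$ there is no chain $\mathfrak p\subsetneq\mathfrak q$ and the construction never gets off the ground.
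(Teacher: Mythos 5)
Your proof is correct, and while the morphism you construct is essentially the paper's one --- $(at:at+1)$ differs from the paper's $(xt+1:1)$ only by composition with an automorphism of $\P^1_\Z$ (an integral M\"obius transformation of unit determinant) --- the mechanism by which you rule out an extension is genuinely different. The paper reduces to $R/\fp$, then passes to the \emph{normalization} and localizes, precisely so that it can invoke $\Pic(\A^1_R)=\Pic(R)=0$ (via the cited Brewer--Costa seminormality theorem); this lets it write any hypothetical extension over the whole chart at infinity as a pair $(a:b)$ with $aR[s]+bR[s]=R[s]$ and $b(x+s)=as$, and reach a contradiction by a short polynomial manipulation. You instead localize at the single closed point $P=V(\fm_A,u)$ of $\P^1_A$, observe that a morphism out of the local scheme $\Spec\cO_P$ must factor through one of the two standard charts of $\P^1_\Z$ (the only open subset of $\Spec\cO_P$ containing its closed point is the whole spectrum), and exclude both options by the quotient computations $\cO_P/\fm_A\cO_P\cong\kappa[u]_{(u)}$ (killing $z=(a+u)/a$ via $u=a(z-1)$) and $\cO_P/(a+u)\cO_P\cong A$ (killing $w=a/(a+u)$); both computations check out, as does the preliminary reduction to the local domain $A=(R/\fp)_{\mathfrak q}$. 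What your route buys is economy of means: no normalization step and no appeal to the nontrivial fact that $\Pic(\A^1_R)=\Pic(R)$ for normal domains, just hands-on local-ring algebra at one point. What the paper's route buys is a contradiction over the entire affine chart at infinity rather than at a chosen point, but that extra strength is not needed for the lemma, so your argument is arguably the more elementary and self-contained of the two.
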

\begin{proof}
  Write $\A_R^1=\Spec R[t]$. Since $R$ is of positive Krull dimension, there is a non-maximal prime ideal $\fp\subset R$. Then there is an element $x\in R$ such that its image in $R/\fp$ is neither zero, nor invertible. Consider the morphism $\A_R^1\to\P^1_\Z$ given by $(tx+1\colon1)$. We claim that it cannot be extended to $\P_R^1$. It is enough to show that its restriction to $\A_{R/\fp}^1$ cannot be extended to $\P_{R/\fp}^1$. Thus, replacing $R$ with $R/\fp$, we may assume that~$R$ is an integral domain and $x$ is a non-zero non-invertible element. Further, replacing $R$ with its normalization (note that non-invertible elements remain non-invertible in the normalization), we may assume that $R$ is normal. Replacing~$R$ with the localization at a maximal ideal containing $x$, we can further assume that $R$ is a normal local integral domain.

  Let us identify $\P^1_R-(0\times\Spec R)$ with $\Spec R[s]$, where $s=1/t$. In this chart, our morphism is defined on $\Spec R[s,s^{-1}]=\A^1_R-(0\times\Spec R)$ and is given by $(x+s\colon s)$. It is enough to show that it cannot be extended to $\A^1_R$. Since $R$ is a normal integral domain, by~\cite[Thm.~1]{BrewerCostaSeminormality} $\Pic(\A^1_R)=\Pic(R)$, which is trivial because $R$ is local. Therefore, such an extension would be of the form $(a\colon b)$, where $a,b\in R[s]$, $aR[s]+bR[s]=R[s]$, and $b(x+s)=as$. Since $bx=s(a-b)$ and $x$ is a non-zero element of $R$, we can write $b=sb'$ for some $b'\in R[s]$. Then $b'(x+s)=a$ and, since $x$ is not a unit in $R$,
  \[
    aR[s]+bR[s]\subset xR[s]+sR[s]\subsetneq R[s].
  \]
  This contradiction completes the proof.
\end{proof}

\begin{proof}[Proof of Proposition~\ref{pr:CounterExample}]
Let us denote an infinite field contained in $B$ by $k$. Since $\bG$ is semisimple, by~\cite[Exp.~XXIV, Thm.~4.1.5(i)]{SGA3-3} there is a finite \'etale cover $\Spec B'\to\Spec B$ and a split $k$-group scheme~$G$ such that $\bG_{B'}\simeq G\times_k\Spec B'$. Fix such $B'$ and an isomorphism $\bG_{B'}\simeq G\times_k\Spec B$.

Fix a closed $B$-embedding $\Spec B'\to\A^1_B$ (which exists because $B$ is local, $k$ is infinite, and $B'$ is $B$-finite and $B$-\'etale). Since $k$ is infinite, we may assume that this embedding factors through $\A^1_B-(0\times\Spec B)$.

For a scheme $S$, we denote the set of $S$-morphisms between $S$-ind-schemes $X_1$ and $X_2$ by $\Mor_S(X_1,X_2)$. We abbreviate $\Mor(X_1,X_2):=\Mor_{\Spec k}(X_1,X_2)$. We need one more lemma.

\begin{lemma}\label{lm:Grassmannians}
We have a bijection between $\Mor(\A^1_{B'},\Gr_G)$ and the set of pairs $(\cE,\tau)$, where $\cE$ is a $\bG$-torsor over $\A^1_k\times\P^1_B$, $\tau$ is a trivialization on $\A^1_k\times(\P^1_B-\Spec B')$.

Similarly, we have a bijection between $\Mor(\P^1_{B'},\Gr_G)$ and the set of pairs $(\cE,\tau)$, where $\cE$ is a $\bG$-torsor over $\P^1_k\times\P^1_B$, $\tau$ is a trivialization on $\P^1_k\times(\P^1_B-\Spec B')$. These two bijections are compatible in the obvious sense.
\end{lemma}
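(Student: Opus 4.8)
The plan is to realize both sides of each asserted bijection as the data classified by the affine Grassmannian via a Beauville--Laszlo type gluing along the divisor $\Spec B'$. Recall from~\cite[Sect.~5]{FedorovExotic} that, for the split group $G$ over $k$, the ind-scheme $\Gr_G$ is characterized by the property that for a $k$-scheme $S$ the set $\Mor(S,\Gr_G)$ is in natural bijection with the isomorphism classes of pairs consisting of a $G$-torsor on the relative formal disk over $S$ together with a trivialization on the punctured formal disk. I will use this local description together with the fact that the closed embedding $\Spec B'\hookrightarrow\A^1_B-(0\times\Spec B)$ realizes $\Spec B'$ as a relative effective Cartier divisor in $\P^1_B$ whose formal neighbourhood is, after the chosen coordinate, the relative formal disk over $\Spec B'$.

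First I would treat the $\A^1_{B'}$ case. Writing $D:=\A^1_k\times\Spec B'\subset\A^1_k\times\P^1_B$ for the relevant divisor, a pair $(\cE,\tau)$ as in the statement consists of a $\bG$-torsor $\cE$ on $\A^1_k\times\P^1_B$ together with a trivialization on the complement of $D$. Since $\bG$ is affine and $D$ is a relative Cartier divisor, the Beauville--Laszlo theorem identifies this datum with the restriction of $\cE$ to the formal completion of $\A^1_k\times\P^1_B$ along $D$ together with the trivialization induced by $\tau$ on the punctured completion; the torsor is reconstructed by gluing along the punctured completion. The formal completion of $\P^1_B$ along $\Spec B'$ is the relative formal disk over $\Spec B'$ (the transverse coordinate being supplied by the coordinate on $\A^1_B$), so the completion along $D$ is the relative formal disk over $\A^1_k\times\Spec B'=\A^1_{B'}$. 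Using the fixed isomorphism $\bG_{B'}\simeq G\times_k\Spec B'$ to replace the $\bG$-torsor on this disk by a $G$-torsor, the local description of $\Gr_G$ recalled above identifies the resulting datum with an element of $\Mor(\A^1_{B'},\Gr_G)$. Running the construction backwards, by gluing the torsor produced on the formal neighbourhood with the trivial torsor on the complement of $D$, provides the inverse bijection.

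The $\P^1_{B'}$ case is formally identical: one replaces the parameter $\A^1_k$ by $\P^1_k$ throughout, so that the divisor becomes $\P^1_k\times\Spec B'\subset\P^1_k\times\P^1_B$ and the formal completion is the relative formal disk over $\P^1_k\times\Spec B'=\P^1_{B'}$. The only point requiring care is that the parameter $\P^1_k$ is no longer affine; but the functor of pairs $(\cE,\tau)$ and the functor of points of $\Gr_G$ are both Zariski sheaves, so the bijection over $\P^1_{B'}$ follows by gluing the already established bijection over the two standard affine charts of $\P^1_k$. Finally, the compatibility of the two bijections is immediate from functoriality: restricting a torsor on $\P^1_k\times\P^1_B$ along the open immersion $\A^1_k\times\P^1_B\hookrightarrow\P^1_k\times\P^1_B$ corresponds, under Beauville--Laszlo gluing, to restricting the associated morphism along $\A^1_{B'}\hookrightarrow\P^1_{B'}$.

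The step I expect to be the main obstacle is the Beauville--Laszlo identification in the relative, parametrized setting (the first half of the second paragraph): one must verify that $D$ is genuinely a relative effective Cartier divisor with the expected formal neighbourhood, and that the gluing is compatible with the infinite parameter $\P^1_k$, matching it precisely with the modular description of $\Gr_G$ adopted in~\cite{FedorovExotic}. Once this local-to-global comparison is in place, passing from $\bG$ to the split $G$ via the fixed trivialization and reading off the morphism to $\Gr_G$ are routine.
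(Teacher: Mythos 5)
Your proof is correct in substance, but it takes a genuinely different (and more self-contained) route than the paper. The paper's proof is essentially a citation plus bookkeeping: \cite[Prop.~5.1]{FedorovExotic} already provides, for every \emph{affine} $B$-scheme $T$, a bijection between $\Mor_B(T\times_B\Spec B',\Gr_\bG)$ and pairs consisting of a $\bG$-torsor on $T\times_B\P^1_B$ with a trivialization away from $\Spec B'$ --- that is, exactly the global Beauville--Laszlo-type statement you undertake to prove. The paper then upgrades this isomorphism of presheaves from affine $T$ to arbitrary $B$-schemes by noting that both sides are Zariski sheaves (this step plays the role of your chart-by-chart gluing over $\P^1_k$), specializes to $T=\A^1_B$ and $T=\P^1_B$, and converts $\Mor_B(\A^1_{B'},\Gr_\bG)$ into $\Mor(\A^1_{B'},\Gr_G)$ via base change of affine Grassmannians and the fixed splitting $\bG_{B'}\simeq G\times_k\Spec B'$; the compatibility assertion is then automatic, since both bijections come from a single morphism of presheaves. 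You instead start from the local, formal-disk description of $\Gr_G$ and rederive the global statement by Beauville--Laszlo gluing; this amounts to reproving the cited proposition rather than invoking it, which is legitimate but obliges you to supply the verifications you correctly single out. Two of them deserve care: (1) Beauville--Laszlo must be applied to torsors under the smooth affine group $G$ (not merely to vector bundles), after first reducing, by Zariski gluing against the trivial torsor, from the non-affine ambient space $\A^1_k\times\P^1_B$ to the affine chart $\A^1_k\times\A^1_B$ containing the divisor; (2) the formal splitting: $\Spec B'\subset\A^1_B$ is cut out by a monic polynomial $f$, and the completion of $B[s,t]$ along $(f)$ is identified with $B'[s][[u]]$, $u\mapsto f$, using a section of the reduction map which exists because $B'$ is \emph{\'etale} over $B$ --- not because the coordinate $t$ of $\A^1_B$ is transverse to $\Spec B'$ (it is not, unless $B'=B$). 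The trade-off is clear: the paper's argument is shorter and keeps all formal-coordinate issues encapsulated in the cited result, while yours is independent of \cite[Prop.~5.1]{FedorovExotic} at the price of making the gluing and the formal splitting explicit.
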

\begin{proof}
Recall that we have fixed a closed $B$-embedding $\Spec B'\to\A^1_B-(0\times\Spec B)$. Now~\cite[Prop.~5.1]{FedorovExotic} gives for any affine $B$-scheme $T$ a bijection
\begin{multline}\label{eq:Gr}
    \Mor_B(T\times_B\Spec B',\Gr_\bG)\simeq\\
    \{(\cE,\tau)\colon\text{$\cE$ is a $\bG$-torsor over $T\times_B\P_B^1$, and $\tau$ is its trivialization on $T\times_B(\P_B^1-\Spec B')$}\}.
\end{multline}
This is, in fact, an isomorphism of presheaves of sets on the category of affine $B$-schemes. Since both presheaves are actually sheaves in the Zariski topology, this isomorphism extends to an isomorphism of similar presheaves on the category of all $B$-schemes. We have
\[
        \Mor(\A^1_{B'},\Gr_G)=\Mor_{B'}(\A^1_{B'},\Gr_{G_{B'}})=\Mor_{B'}(\A^1_{B'},\Gr_{\bG_{B'}})=\Mor_B(\A^1_{B'},\Gr_\bG).
\]
Taking $T:=\A_B^1$ in~\eqref{eq:Gr} above, we see that the elements of $\Mor_B(\A^1_{B'},\Gr_\bG)$ correspond to pairs $(\cE,\tau)$, where $\cE$ is a $\bG$-torsor over $\A^1_B\times_B\P^1_B=\A^1_k\times\P^1_B$ and $\tau$ is its trivialization on
\[
    \A^1_B\times_B(\P^1_B-\Spec B')=\A^1_k\times(\P^1_B-\Spec B').
\]
The second bijection is constructed similarly upon taking $T=\P_B^1$ in~\eqref{eq:Gr}. The compatibility follows because our bijections come from an isomorphism of presheaves.
\end{proof}

We need one more lemma.
\begin{lemma}
Let, as above, $G$ be a split nontrivial semisimple group over a field $k$. Then the affine Grassmannian $\Gr_G$ contains a closed subscheme isomorphic to $\P_k^1$.
\end{lemma}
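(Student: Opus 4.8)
The plan is to realize the required copy of $\P^1$ as the closure of a one-parameter family of Hecke modifications of the trivial $G$-torsor, built from a single coroot. Fix a maximal split torus $T\subset G$; since $G$ is nontrivial and semisimple it has a nonzero root $\alpha$, with coroot $\alpha^\vee\in X_*(T)$ and root subgroup $u_\alpha\colon\A^1\xrightarrow{\sim}U_\alpha\subset G$. Working in the loop description $\Gr_G(T)=G(k((t)))/G(k[[t]])$ (equivalently, in the torsor model of~\cite{FedorovExotic}, where a point is the trivial $G$-torsor on $\P^1$ modified at one point), I would define a morphism $\gamma\colon\A^1\to\Gr_G$ sending $a$ to the class of $u_\alpha(at^{-1})$; in the torsor language this is the trivial torsor modified along the section by $u_\alpha(at^{-1})$ and trivialized away from it. Since $u_\alpha(at^{-1})$ has a single simple pole, the image lands in the $G(k[[t]])$-orbit closure $\overline{\Gr^{\alpha^\vee}}$, which is projective because $\Gr_G$ is ind-projective.

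First I would check that $\gamma$ is a monomorphism. On $T$-points, $u_\alpha(at^{-1})$ and $u_\alpha(a't^{-1})$ give the same point of $\Gr_G$ if and only if $u_\alpha((a-a')t^{-1})\in G(k[[t]])$, and since $U_\alpha(k[[t]])=u_\alpha(k[[t]])$ this happens exactly when $(a-a')t^{-1}\in k[[t]]$, i.e.\ $a=a'$. Thus $\gamma$ is a monomorphism on $\A^1$. Next I would extend $\gamma$ to a morphism $\bar\gamma\colon\P^1\to\overline{\Gr^{\alpha^\vee}}$: because $\overline{\Gr^{\alpha^\vee}}$ is proper and $\P^1$ is a smooth curve, the valuative criterion produces the extension, and one identifies the single added point as the limit $a\to\infty$. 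The rescaling $\gm$-action $a\mapsto s^{2}a$ coming from conjugation by $\alpha^\vee(s)$ extends to $\P^1$ fixing $0$ and $\infty$, which helps to locate and analyze this point.

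The crux is to show that $\bar\gamma$ is a closed immersion with smooth image, so that $C:=\bar\gamma(\P^1)\cong\P^1$ is the desired closed subscheme. I would argue that $\bar\gamma$ is a proper monomorphism---hence a closed immersion---by verifying the monomorphism property also near $a=\infty$, using a second affine chart obtained from a Weyl-group lift $\dot w_\alpha$ (which carries the $u_\alpha$-family to the $u_{-\alpha}$-family) and checking injectivity on tangent vectors. The model case is the rank-one subgroup $G_\alpha=\langle U_\alpha,U_{-\alpha}\rangle$, isomorphic to $\SL_2$ or $\PGL_2$: in the $\PGL_2$ case $C$ is the minuscule Schubert line $\overline{\Gr^{\omega^\vee}}\cong\P^1$, while in the $\SL_2$ case $\overline{\Gr^{\alpha^\vee}}$ is a quadric cone and $C$ is one of its rulings, again a smooth $\P^1$. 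Finally, $C$ is closed in $\overline{\Gr^{\alpha^\vee}}$, which is closed in $\Gr_G$, yielding the closed $\P^1$. The main obstacle I anticipate is exactly this last verification---that $\bar\gamma$ is a closed immersion (injectivity on points and on tangent spaces, together with reducedness and smoothness at the point at infinity)---since maps of affine Grassmannians induced by subgroups need not be closed immersions in general; handling the $\SL_2$-versus-$\PGL_2$ dichotomy cleanly is the delicate point.
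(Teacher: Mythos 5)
Your route---close up the one-parameter family $a\mapsto[u_\alpha(at^{-1})]$ inside the Schubert variety and show that the closure is a closed $\P^1$---is genuinely different from the paper's and can be completed, but as written the decisive step is missing exactly where you place ``the main obstacle'': nothing in the proposal proves that $\bar\gamma$ is a closed immersion, and the device you propose for the chart at infinity does not work. Left translation by a Weyl lift $\dot w_\alpha$ maps your curve isomorphically onto the $u_{-\alpha}$-curve, sending the base point to the base point and the limit point $[t^{-\alpha^\vee}]$ to $[t^{\alpha^\vee}]$, i.e.\ the point at infinity to the point at infinity; so it yields no new chart. What does work: for $a\ne0$ one has $[u_\alpha(at^{-1})]=[u_{-\alpha}(a^{-1}t)\,t^{-\alpha^\vee}]$ (an $\SL_2$ matrix identity pushed into $G$ via the root homomorphism), so near infinity the curve is $b\mapsto[u_{-\alpha}(bt)\,t^{-\alpha^\vee}]$ with $b=1/a$; since $t^{\alpha^\vee}u_{-\alpha}(xt)\,t^{-\alpha^\vee}=u_{-\alpha}(xt^{-1})$, the same pole computation as on your first chart gives injectivity and unramifiedness there, and properness then yields the closed immersion. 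Two further points need repair. Your $\PGL_2$ claim is false: your curve passes through the base point (at $a=0$) and hence lies in the neutral component, while the minuscule Schubert variety $\overline{\Gr^{\omega^\vee}}$ lies in the other component of $\Gr_{\PGL_2}$; in fact for $G_\alpha\simeq\SL_2$ and for $G_\alpha\simeq\PGL_2$ the reduced neutral components coincide, and in both cases your curve is a ruling of the quadric cone $\overline{\Gr^{\alpha^\vee}}$, so the dichotomy is harmless, though not for the reason you give. Also, reducing to $G_\alpha$ presupposes that $\Gr_{G_\alpha}\to\Gr_G$ is a closed immersion; this is true because $G_\alpha$ is reductive, hence $G/G_\alpha$ is affine, but you flag this without resolving it. (For $\SL_2$ everything can be seen at once in the lattice model: with $L_0=k[\![t]\!]e_1\oplus k[\![t]\!]e_2$, inside $t^{-1}L_0/tL_0\simeq k^4$ your lattices form the pencil of planes spanned by $e_1$ and $at^{-1}e_1+e_2$, whose Pl\"ucker image is a line in $\P^5$, hence a linearly embedded closed $\P^1$.)

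By contrast, the paper avoids any closed-immersion verification inside $\Gr_G$: by \cite[Lm.~5.9]{FedorovExotic}, the $G$-orbit of $t^\lambda$, for a nontrivial cocharacter $\lambda$, is the flag variety $G/P_\lambda$, which is projective and therefore closed in $\Gr_G$, and then $\P^1_k=H/(H\cap P_\lambda)\hookrightarrow G/P_\lambda$ is a classical closed embedding, where $H$ is the rank-one subgroup attached to a simple root not contained in $P_\lambda$. Your argument trades that citation for a by-hand Schubert-curve computation; it becomes a valid alternative proof once the verification at infinity is actually carried out.
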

\begin{proof}
Since $G$ is split, we can choose a split maximal torus $T$ and a Borel subgroup $B$ such that $T\subset B\subset G$. Let $\lambda\colon\gm\to T$ be a nontrivial cocharacter. Consider the composition $\Spec k(\!(t)\!)\to\gm\xrightarrow{\lambda}T\to G$, where $k(\!(t)\!)$ is the field of Laurent series. This is a loop of $G$, denote by $t^\lambda$ its projection to $\Gr_G=G(\!(t)\!)/G[\![t]\!]$. By~\cite[Lm.~5.9]{FedorovExotic} the $G$-orbit of $t^\lambda$ in $\Gr_G$ is isomorphic to the flag variety of parabolic subgroups of type $\lambda$ of $G$, that is, to $G/P_\lambda$, where $P_\lambda\supset B$ is the standard parabolic subgroup of $G$ corresponding to $\lambda$.

It remains to show that $G/P_\lambda$ contains a closed subscheme isomorphic to $\P_k^1$. Let $\alpha$ be a simple root of $G$ with respect to $T\subset B$ such that the corresponding root subgroup is not contained in $P_\lambda$, and let $H$ be the subgroup of $G$ whose Lie algebra is the principal $\mathfrak{sl}_2$ corresponding to $\alpha$. Then $H$ is isomorphic to $\SL_2$ or $\PGL_2$ and $H\cap P_\lambda$ is a nontrivial parabolic subgroup of $H$. Thus, we have a closed embedding $\P^1_k=H/(H\cap P_\lambda)\to G/P_\lambda$.
\end{proof}

We can now complete the proof of Proposition~\ref{pr:CounterExample}. Clearly, $B'$ is not Artinian. By Lemma~\ref{lm:NonExt} there is a morphism $\A^1_{B'}\to\P^1_\Z$ that cannot be extended to
$\P^1_{B'}$. The corresponding $k$-morphism $\A^1_{B'}\to\P^1_k$ cannot be extended to $\P^1_{B'}$ either. Let $\phi\colon\A^1_{B'}\to\Gr_G$ be the composition of this morphism with the constructed above closed embedding $\P_k^1\hookrightarrow\Gr_G$. By construction, $\phi$ cannot be extended to $\P^1_{B'}$.

By Lemma~\ref{lm:Grassmannians}, $\phi$ corresponds to a $\bG$-torsor $\cE$ over $\A^1_k\times\P^1_B$ with a trivialization $\tau$ on $\A^1_k\times(\P^1_B-\Spec B')$. For a point $x\in\P^1_B$ let $R_x$ be the local ring of $x$ on $\P^1_B$. We claim that for some $x$ the $\bG$-torsor $\cE|_{\A^1_{R_x}}$ cannot be extended to $\P^1_{R_x}$. Assume the converse. Set $\bH=\bG_{\P^1_B}$, then $\bH$ is generically anisotropic. Indeed, were it isotropic over some open subscheme $U\subset\P^1_B$, there would be a rational point $y\in\P_k^1$ such that $(y\times\Spec B)\cap U\ne\emptyset$ (because $k$ is infinite). But then $\bG\approx\bH|_{y\times\Spec B}$ would not be generically anisotropic. Applying Proposition~\ref{pr:ExtAniso}(i) to $X=\P^1_B$ and $\bH$, we get an extension of $\cE$ to a $\bG$-torsor $\widetilde\cE$ over $\P^1_k\times\P^1_B$.

The trivial $\bG$-torsor $\cE|_{\A^1_k\times(\P^1_B-\Spec B')}$ can obviously be extended to a trivial torsor on $\P^1_k\times(\P^1_B-\Spec B')$.
Applying Proposition~\ref{pr:ExtAniso}(ii) to $X=\P^1_B-\Spec B'$, $\bH=\bG_{\P^1_B-\Spec B'}$ (which is also generically anisotropic), we see that this trivialization gives an extension $\tilde\tau$ of the trivialization $\tau$. The pair $(\widetilde\cE,\tilde\tau)$ gives an extension of $\phi$ to $\P^1_{B'}$ by Lemma~\ref{lm:Grassmannians}. However, $\phi$ cannot be extended to $\P^1_{B'}$, and we get a contradiction. Without loss of generality the point $x$ belongs to $\A^1_B\subset\P^1_B$, so the proposition is proved.
\end{proof}

\bibliographystyle{../../alphanum}
\bibliography{../../RF}

\end{document}